\date{}
\newtheorem{theorem}{Theorem}
\newtheorem{lemma}[theorem]{Lemma}
\newtheorem{cor}[theorem]{Corollary}
\newtheorem{remark}[theorem]{Remark}
\theoremstyle{definition} 
\DeclareMathOperator*{\argmin}{arg\, min}
\DeclareMathOperator{\ext}{\mathcal{E}}
\DeclareMathOperator{\V}{\mathbb{V}}
\DeclareMathOperator{\W}{\mathbb{W}}
\newcommand{\ip}[2]{\langle#1\hspace*{.5mm},#2\rangle}
\newcommand{\dual}[2]{(#1\hspace*{.5mm},#2)}
\newcommand{\vdual}[2]{(#1\hspace*{.5mm},#2)}
\newcommand{\abs}[1]{\vert #1 \vert}
\newcommand{\norm}[3][]{#1\|#2#1\|_{#3}}
\newcommand{\diam}{\mathrm{diam}}
\newcommand{\wat}{\widehat}
\def\pwnabla{\nabla_\TT}
\def\div{{\rm div\,}}
\def\pwdiv{ {\rm div}_{\TT}\,}
\newcommand{\trace}{\gamma}
\newcommand{\hp}{{hp}}
\def\uc{u^c}
\def\Omc{\Omega^c}
\newcommand{\BEW}{{\rm hy}}
\newcommand{\BEV}{{\rm sl}}
\newcommand{\BEC}{{\rm ca}}
\newcommand{\LS}{{\rm LS}}
\newcommand{\R}{\ensuremath{\mathbb{R}}}
\newcommand{\HH}{\ensuremath{\mathbf{H}}}
\newcommand{\LL}{\ensuremath{\mathbf{L}}}
\newcommand{\nn}{\ensuremath{\mathbf{n}}}
\newcommand{\vv}{\ensuremath{\boldsymbol{v}}}
\newcommand{\ww}{\ensuremath{\boldsymbol{w}}}
\newcommand{\TT}{\ensuremath{\mathcal{T}}}
\newcommand{\cS}{\ensuremath{\mathcal{S}}}
\newcommand{\el}{\ensuremath{T}}
\newcommand{\OO}{\ensuremath{\mathcal{O}}}
\newcommand{\EE}{\ensuremath{\mathcal{E}}}
\newcommand{\ssigma}{{\boldsymbol\sigma}}
\newcommand{\ttau}{{\boldsymbol\tau}}
\newcommand{\qq}{{\boldsymbol{q}}}
\newcommand{\uu}{\boldsymbol{u}}
\newcommand{\slo}{\mathcal{V}}
\newcommand{\slp}{\widetilde\slo}
\newcommand{\hyp}{\mathcal{W}}
\newcommand{\dlo}{\mathcal{K}}
\newcommand{\dlp}{\widetilde\dlo}
\newcommand{\adlo}{\mathcal{K}'}
\newcounter{constantsnumber}
\def\setc#1{
  \ifthenelse{\equal{#1}{poinc}}{C_{\rm edge}}{ 
   \refstepcounter{constantsnumber}
   \label{const#1}C_{\theconstantsnumber}}}
\def\c#1{
  \ifthenelse{\equal{#1}{poinc}}{C_{\rm edge}}{ 
    C_{\ref{const#1}}}}
\newcommand{\Omat}{\mathbf{O}}
\newcommand{\Mmat}{\mathbf{M}}
\newcommand{\Pmat}{\mathbf{P}}
\newcommand{\xx}{\mathbf{x}}
\newcommand{\yy}{\mathbf{y}}
\newcommand{\est}{\operatorname{est}}
\title{On the coupling of DPG and BEM
\thanks{Supported by CONICYT through FONDECYT projects 1150056, 3140614, 3150012,
        and Anillo ACT1118 (ANANUM).}}
\author{
Thomas~F\"uhrer$^\dagger$
\and
Norbert Heuer$^\dagger$
\and
Michael Karkulik\thanks{
Facultad de Matem\'aticas, Pontificia Universidad Cat\'olica de Chile,
Avenida Vicu\~na Mackenna 4860, Santiago, Chile,
email: {\tt \{tofuhrer,nheuer,mkarkulik\}@mat.puc.cl}}}
\begin{document}
\maketitle
\begin{abstract}
  We develop and analyze strategies to couple the discontinuous Petrov-Galerkin method with
  optimal test functions to (i) least-squares boundary elements and (ii)
  various variants of standard Galerkin boundary elements.
  Essential feature of our methods is that, despite the use of boundary integral equations,
  optimal test functions have to be computed only locally.
  We apply our findings to a standard transmission problem in full space and
  present numerical experiments to validate our theory.

\bigskip
\noindent
{\em Key words}:
transmission problem, DPG method with optimal test functions, boundary elements,
least-squares method, coupling, ultra-weak formulation, Calder\'on projector

\noindent
{\em AMS Subject Classification}: 65N30, 35J20, 65N38
\end{abstract}

\section{Introduction}

In its current form, the discontinuous Petrov-Galerkin (DPG) method with optimal test functions
has been introduced by Demkowicz and Gopalakrishnan \cite{DemkowiczG_11_ADM,DemkowiczG_11_CDP}.
Principal objective is to guarantee uniform stability of the discrete scheme by the use of
specific \emph{optimal test functions}, in particular
for singularly perturbed \cite{DemkowiczH_13_RDM,ChanHBTD_14_RDM,BroersenS_14_RPG,BroersenS_15_PGD}
and wave problems \cite{ZitelliMDGPC_11_CDP,DemkowiczGMZ_12_WEA,GopalakrishnanMO_14_DDE}.

Essential feature of this method is that optimal test functions be calculated (approximatively)
locally on elements. This is well understood for partial differential equations on bounded domains.
When dealing with problems in unbounded domains (like transmission problems)
boundary integral equations are a natural way to deal with the exterior part. Their numerical analysis
usually requires the use of trace spaces, which are Sobolev spaces of non-integer orders.
As the integral operators themselves, corresponding norms are non-local.
That is, they cannot be represented equivalently as broken norms over elements and, therefore,
there are no straightforward efficient techniques for the calculation of optimal test functions
in these cases. Perhaps surprisingly, in \cite{HeuerP_14_UFH,HeuerK_DPG} we found an
ultra-weak variational formulation for hypersingular operators that is well posed in integer-order
Sobolev spaces as long as the underlying polygon (in two dimensions) or surface
(in three dimensions) is closed. Then, also the DPG framework
with optimal test functions goes through without complications (this is different for open
curves and surfaces). Still, the underlying formulation involves boundary integral operators.
This implies that optimal test functions cannot be calculated locally, and this seems natural for
global operators.

Now, returning to transmission problems with variational formulations comprising boundary integral
operators, application of DPG technology faces the problem of how to efficiently calculate optimal
test functions. In \cite{HeuerK_DPGb} we propose to globally apply this technique without specific
separation between differential and integral operators. It turns out that, as expected,
the integral operators generate global effects for optimal test functions, but only at a strip along
the interface (on which the integral operators live). In this way, one partially looses locality
in the calculation of optimal test functions.

In this paper, we pursue the idea of splitting approximations for partial differential operators from
those of boundary integral operators. This is very natural when coupling finite elements with
the boundary element method (BEM). Indeed, there is a long list of references and we only cite
\cite{JohnsonN_80_CBI,Costabel_88_SMC,CostabelS_88_CFE,BielakM_91_SFE} to name a few classical ones,
and refer to more recent developments \cite{Sayas_09_VJN,Steinbach_11_NSO,GaticaHS_12_RHB}
for detailed discussions.
However, when trying to use similar coupling techniques of DPG with BEM, there is a certain conflict
of frameworks. DPG-analysis is based upon arguments from functional analysis (even at the discrete level)
and the assignment of optimal test functions to any approximating basis function. This generates
square systems and complicates the coupling with Galerkin boundary elements where some of the approximating
finite element basis functions are tested with traditional basis functions from other discrete test spaces.

Noting that the DPG method with optimal test functions is in fact a least-squares method, it seems
most natural to couple it with a least-squares boundary element method. Previously, least-squares
boundary elements have been coupled with finite elements in
\cite{GaticaHS_03_LSC,MaischakS_04_LSC,MaischakOS_12_LSF}, and also in combination with the DPG method
they perform well, as we will show.
Additionally, we propose three coupling methods with Galerkin boundary elements,
one with hypersingular operator, one with weakly singular operator, and another one with
a combination of both.

Differently from traditional coupling methods, where some of the Cauchy data are represented directly
via boundary integral operators, our boundary element equations work as constraints for the DPG scheme.
In this way, ellipticity of the resulting bilinear form can be achieved and the well-posedness of
the discretization follows from that of the continuous formulation. We note, however, that
in the case of the Galerkin variants, proofs
of ellipticity require weighting of the DPG part with a positive number whose size is in principle
unknown. In our numerical experiments we always choose the constant one and observe perfect results.
Numerical tests for our model problems (and the specific domain under consideration)
indicate that only a small weighting of the order of $1/3$ or less
results in a stiffness matrix with symmetric part which is not positive definite.

Essential advantage of all our coupling methods is that optimal test functions have to be calculated only
for terms involving differential operators. In this way, locality of the corresponding
\emph{trial-to-test operator} (that maps ansatz functions to optimal test functions)
is fully maintained and DPG technology is applicable without interference of non-local operators.
In fact, we expect that in this way DPG formulations, that are specifically designed for singularly
perturbed and wave problems, can be coupled to boundary elements.
On the other hand, the analysis of two of our methods (least-squares BEM and hypersingular operator)
is based upon the fact that the kernel of the interior
formulation is strongly related to solutions of the exterior part of the problem.
In fact, Calder\'on boundary integral operators (used for our coupling schemes) map kernel functions
of the interior problem to components of their Cauchy data. This relation is maintained whenever
the interior and exterior differential operators of the transmission problem are identical.
At the moment it is unclear whether our analysis of those two methods can be extended to the case
of different interior and exterior problems.

To prove ellipticity of the coupling variant with weakly singular operator and Galerkin approximation,
we apply a recent technique from Sayas and Steinbach \cite{Sayas_09_VJN,Steinbach_11_NSO}. Here,
only gentle relations between Cauchy data of interior and exterior problems are needed, and
the proof extends to interior operators of the form $-\div A(x)\nabla u$ with minimum eigenvalue
of $A$ strictly bounded below by $1/4$.

The analysis of our fourth coupling scheme with a combination
of hypersingular and weakly singular operators, also does not rely on a specific relation of Cauchy
data. It is the coupling variant which is relatively straightforwardly applicable to more general problems
combining, e.g., singularly perturbed PDEs on bounded domains with linear, homogeneous PDEs
and constant coefficients in the exterior. Contrary to our analysis of the coupling with weakly singular
operator, such an extension would not need a condition on the minimum eigenvalue of the interior differential operator.

The remainder of this paper is as follows. In Section~\ref{sec_math} we present the model problem,
recall some Sobolev spaces and norms (Section~\ref{sec_Sob}),
revisit properties of boundary integral operators (Section~\ref{sec_BIO}),
present our three coupled schemes (Section~\ref{sec_schemes}),
and state their well-posedness and quasi-optimal convergence
(Theorems~\ref{thm_equiv},~\ref{thm_W},~\ref{thm_V}). The presentation of the schemes is first done
in an abstract way (in Section~\ref{sec_abstract}). We then recall an ultra-weak variational
formulation of the interior part of the problem (Section~\ref{sec_fem}) and deal with the
three coupling schemes, respectively, in Subsections~\ref{sec_LS},~\ref{sec_W}, and~\ref{sec_V}.
Further technical details and proofs of the main theorems are given in Section~\ref{sec_LS_tech}.
In Section~\ref{sec_num} we present several numerical examples. We also give details on the
implementation of our least-squares boundary elements (Section~\ref{sec_LS_impl}) and present
a simple a posteriori error estimator (Section~\ref{sec_est}) that serves as a common
error bound for comparison.

\section{Mathematical setting and main results} \label{sec_math}
Let $\Omega\subset\R^d$, $d\in\{2,3\}$, be a bounded, simply connected Lipschitz domain
with boundary $\Gamma$, $\Omc:=\R^d\setminus\overline\Omega$, and
normal vector $\nn_\Omega$ on $\Gamma$ pointing in direction of $\Omc$.
We consider the following model transmission problem:
given $f\in L_2(\Omega)$, $u_0\in H^{1/2}(\Gamma)$, $\phi_0\in H^{-1/2}(\Gamma)$,
find $u\in H^1(\Omega)$ and $u^c\in H^1_{\mathrm{loc}}(\Omc)$ such that
\begin{subequations}\label{tp}
\begin{align}
  -\Delta u &= f \text{ in } \Omega,\label{tpa}\\
  \Delta \uc &= 0 \text{ in } \Omc, \label{tpb}\\
  u-\uc &= u_0 \text{ on } \Gamma,\label{tpc}\\
  \frac{\partial}{\partial\nn_\Omega}(u-\uc) &= \phi_0 \text{ on } \Gamma,\label{tpd}\\
  \uc(x) &= \OO(\abs{x}^{-1}) \text{ as } \abs{x}\rightarrow\infty.\label{tpe}
\end{align}
\end{subequations}
Here, $H^1(\Omega)$, $H^1_{\mathrm{loc}}(\Omc)$ denote standard Sobolev spaces with trace space
$H^{1/2}(\Gamma)$ and its dual $H^{-1/2}(\Gamma)$, see Section~\ref{sec_Sob} for precise definitions.
For $d=2$, we assume in addition that $\int_\Omega f + \int_\Gamma \phi_0 = 0$ 
(which ensures the correct behavior~\eqref{tpe} at infinity).

\subsection{Abstract coupling framework} \label{sec_abstract}
Our aim is to use a DPG formulation for the interior part of problem \eqref{tp}, and to couple
it with boundary integral equation(s) to take the exterior part into account. We will propose
four coupled schemes whose settings are formally identical.
In this section we present the corresponding abstract framework.

Let $U$ and $V$ be, respectively, a reflexive Banach space with norm $\norm{\cdot}{U}$
and a Hilbert space with inner product $\ip{\cdot}{\cdot}_V$ and norm $\norm{\cdot}{V}$.
For a given bilinear form $b(\cdot,\cdot):\;U\times V\to\R$ and linear functional
$L_V\in V'$ we consider the variational formulation
\begin{equation} \label{VF}
   \uu\in U:\quad b(\uu,\vv) = L_V(\vv)\quad\forall \vv\in V.
\end{equation}
We will also make use of the operator $B:\;U\to V'$ induced by the bilinear form $b$.
In our case, problem \eqref{VF} itself has no unique solution and will be coupled with a variational relation
\begin{equation} \label{Cauchy}
   \uu\in U:\quad c(\uu,\ww) = L_U(\ww)\quad\forall\ww\in U.
\end{equation}
Here, $c(\cdot,\cdot):\;U\times U\to\R$ is another bilinear form and $L_U\in U'$ a linear functional.
Essential feature in the discretization of \eqref{VF} is the use of so-called \emph{optimal test
functions}. They are generated through the \emph{weighted trial-to-test operator}
$\Theta_\beta:U\to V$ defined by
\begin{align} \label{ttt}
  \ip{\Theta_\beta\uu}{\vv}_V = \beta\, b(\uu,\vv)\quad\forall\vv\in V\quad (\beta>0).
\end{align}
Using this operator for a constant $\beta>0$, we combine the relations \eqref{VF} and \eqref{Cauchy}
to the following variational formulation:
\begin{align}\label{eq_dpgcoupling}
  \uu\in U:\quad b(\uu,\Theta_\beta\ww) + c(\uu,\ww) = L_V(\Theta_\beta\ww) + L_U(\ww)
  \quad\forall\ww\in U.
\end{align}
Note that in this DPG-type formulation, the trial-to-test operator makes use of the bilinear form
$b(\cdot,\cdot)$ exclusively. In our specific schemes introduced below, this allows for maintaining
locality of the application of $\Theta_\beta$ while the bilinear form $c(\cdot,\cdot)$ comprises
non-local (boundary integral) operators.

For a finite-dimensional approximation space $U_\hp\subset U$, the discrete scheme then is
\begin{align}\label{eq_dpgcoupling_discrete}
  \uu_\hp\in U_\hp:\quad b(\uu_\hp,\Theta_\beta\ww) + c(\uu_\hp,\ww) = L_V(\Theta_\beta\ww) + L_U(\ww)
  \quad\forall\ww\in U_\hp.
\end{align} 
The following lemma states well-posedness of~\eqref{eq_dpgcoupling}
and best-approximation property of its discrete form~\eqref{eq_dpgcoupling_discrete}.
It amounts to using the Lax-Milgram lemma (in the case of Hilbert spaces)
or Babu\v{s}ka-Brezzi theory (in the case of Banach spaces)
and C\'ea's lemma for elliptic variational formulations.
Of course, this is a standard procedure in least-squares analysis, see, e.g., \cite{BochevG_09_LSF}.

\begin{lemma}\label{lem_dpgcoupling}
Suppose that $b:\;U\times V\to\R$ and $c:\;U\times U\to\R$ are bounded bilinear forms with
bounds $C_b$ and $C_c$, respectively. Furthermore, assume that, for given $\beta>0$,
there is $\alpha>0$ such that
\begin{align*}
   \alpha \norm{\ww}{U}^2 \le b(\ww,\Theta_\beta\ww) + c(\ww,\ww)\quad\forall\ww\in U,
\end{align*}
that is, the bilinear form on the left-hand side of~\eqref{eq_dpgcoupling} is $U$-elliptic.
Then, there is a unique solution $\uu$ of~\eqref{eq_dpgcoupling} with

\begin{align*} 
   \norm{\uu}{U}
   \le
   \frac 1\alpha \bigl(\beta C_b \norm{L_V}{V'} + \norm{L_U}{U'}\bigr).
\end{align*}
In addition, there is a unique solution $\uu_\hp$ of~\eqref{eq_dpgcoupling_discrete}, and it satisfies
\begin{align*}
   \norm{\uu-\uu_\hp}{U}
   \le
   \frac{\beta C_b^2+C_c}{\alpha} \inf_{\ww\in U_\hp} \norm{\uu-\ww}{U}.
\end{align*}
\end{lemma}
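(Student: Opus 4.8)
The plan is to reduce everything to the stated elliptic variational problem on $U$ and then invoke standard tools. First I would define the combined bilinear form $a(\uu,\ww) := b(\uu,\Theta_\beta\ww) + c(\uu,\ww)$ and the combined functional $F(\ww) := L_V(\Theta_\beta\ww) + L_U(\ww)$, so that \eqref{eq_dpgcoupling} reads $a(\uu,\ww) = F(\ww)$ for all $\ww\in U$. The hypothesis gives $U$-ellipticity of $a$ with constant $\alpha$. The main work is to show $a$ is bounded on $U\times U$ and $F\in U'$, after which existence, uniqueness, and the bounds follow from Lax--Milgram (if $U$ is Hilbert) or, since $U$ is only assumed reflexive Banach, from the Lipschitz--Lax--Milgram / Babu\v{s}ka--Ne\v{c}as argument (ellipticity plus boundedness on a reflexive Banach space suffices; note ellipticity already forces the inf-sup condition and the density of the range).

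For the boundedness of $a$ and the norm of $F$, the key estimate is the operator bound $\norm{\Theta_\beta\ww}{V} \le \beta C_b \norm{\ww}{U}$. This I would derive directly from the definition \eqref{ttt}: taking $\vv = \Theta_\beta\ww$ gives $\norm{\Theta_\beta\ww}{V}^2 = \beta\, b(\uu=\ww,\Theta_\beta\ww) \le \beta C_b \norm{\ww}{U}\norm{\Theta_\beta\ww}{V}$, whence the claim (well-definedness of $\Theta_\beta\ww$ itself follows from Riesz representation, since $\vv\mapsto \beta b(\ww,\vv)$ is a bounded linear functional on the Hilbert space $V$). Then $|b(\uu,\Theta_\beta\ww)| \le C_b\norm{\uu}{U}\norm{\Theta_\beta\ww}{V} \le \beta C_b^2 \norm{\uu}{U}\norm{\ww}{U}$ and $|c(\uu,\ww)| \le C_c\norm{\uu}{U}\norm{\ww}{U}$, so $a$ is bounded with constant $\beta C_b^2 + C_c$. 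Likewise $|F(\ww)| \le \norm{L_V}{V'}\norm{\Theta_\beta\ww}{V} + \norm{L_U}{U'}\norm{\ww}{U} \le (\beta C_b\norm{L_V}{V'} + \norm{L_U}{U'})\norm{\ww}{U}$.

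With these in hand the continuous result is immediate: ellipticity and boundedness yield a unique $\uu\in U$, and testing the equation with $\ww=\uu$ gives $\alpha\norm{\uu}{U}^2 \le a(\uu,\uu) = F(\uu) \le (\beta C_b\norm{L_V}{V'}+\norm{L_U}{U'})\norm{\uu}{U}$, which after dividing by $\norm{\uu}{U}$ is exactly the stated a priori bound. For the discrete problem \eqref{eq_dpgcoupling_discrete}, since $U_\hp\subset U$ is a (finite-dimensional, hence closed) subspace, the same ellipticity and boundedness restrict to $U_\hp\times U_\hp$, giving a unique $\uu_\hp$. The quasi-optimality is then C\'ea's lemma applied to the nonsymmetric but coercive form: for any $\ww\in U_\hp$, using Galerkin orthogonality $a(\uu-\uu_\hp,\vv_\hp)=0$ for $\vv_\hp\in U_\hp$,
\begin{align*}
  \alpha\norm{\uu-\uu_\hp}{U}^2 \le a(\uu-\uu_\hp,\uu-\uu_\hp) = a(\uu-\uu_\hp,\uu-\ww) \le (\beta C_b^2+C_c)\norm{\uu-\uu_\hp}{U}\norm{\uu-\ww}{U},
\end{align*}
and dividing by $\norm{\uu-\uu_\hp}{U}$ and taking the infimum over $\ww\in U_\hp$ gives the claimed constant $(\beta C_b^2 + C_c)/\alpha$.

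I do not anticipate a serious obstacle here — this is the textbook least-squares/DPG argument. The only point requiring a little care is the functional-analytic framework: the lemma allows $U$ to be merely a reflexive Banach space (not Hilbert), so one cannot literally cite Lax--Milgram. The clean fix is to observe that $U$-ellipticity of $a$ implies both the inf-sup condition $\inf_{\ww}\sup_{\vv} a(\ww,\vv)/(\norm{\ww}{U}\norm{\vv}{U}) \ge \alpha$ and the nondegeneracy condition (no $\vv\neq 0$ annihilates $a(\cdot,\vv)$), so the Babu\v{s}ka--Ne\v{c}as theorem applies on $U\times U$; on the finite-dimensional $U_\hp$ injectivity plus equal dimensions already gives solvability. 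Everything else is the chain of Cauchy--Schwarz-type estimates above.
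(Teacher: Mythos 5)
Your proposal is correct and follows essentially the same route as the paper: bound $\norm{\Theta_\beta\ww}{V}\le\beta C_b\norm{\ww}{U}$ (you do this by testing \eqref{ttt} with $\vv=\Theta_\beta\ww$, the paper equivalently via $\Theta_\beta=\beta J_V^{-1}B$ with the isometric Riesz map), then apply Lax--Milgram/Babu\v{s}ka--type theory for existence, uniqueness and the a priori bound, and C\'ea's lemma with the Galerkin orthogonality $b(\uu-\uu_\hp,\Theta_\beta\ww)+c(\uu-\uu_\hp,\ww)=0$ for the quasi-optimality constant $(\beta C_b^2+C_c)/\alpha$. Your extra care about $U$ being only reflexive Banach matches the paper's parenthetical appeal to Babu\v{s}ka--Brezzi theory.
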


\begin{proof}
The statements are immediate consequences of the Lax-Milgram lemma (or Babu\v{s}ka-Brezzi theory)
and C\'ea's lemma. In fact, ellipticity of the bilinear form holds by assumption and boundedness of the
bilinear form and linear functionals follows by bounding
$\norm{\Theta_\beta\ww}{V} \le \beta C_b\norm{\ww}{U}$ for all $\ww\in U$.
To this end, note that $\Theta_\beta = \beta J_V^{-1}B$ with isometric Riesz map $J_V:\;V\to V'$.
To show the best approximation property we also make use of the orthogonality
$b(\uu-\uu_\hp,\Theta_\beta\ww)+c(\uu-\uu_\hp,\ww)=0$ $\forall\ww\in U_\hp$.
\end{proof}

In Section~\ref{sec_fem} we will specify~\eqref{VF} as an ultra-weak variational
formulation of the interior part of \eqref{tp},
and in Sections~\ref{sec_LS},~\ref{sec_W},~\ref{sec_V} we will present three possibilities
of selecting \eqref{Cauchy} as boundary integral equations.
Before doing so we introduce some Sobolev spaces and norms in Section~\ref{sec_Sob},
and recall properties of boundary integral operators in Section~\ref{sec_BIO}.

\subsection{Sobolev spaces and norms}\label{sec_Sob}
For a Lipschitz domain $\omega\subset\R^d$ we use the standard Sobolev spaces
$L_2(\omega)$, $H^1(\omega)$, $H^1_0(\omega)$ (the index $0$ denotes vanishing trace on the
boundary of $\omega$), $\HH(\div,\omega)$, $\HH_0(\div,\omega)$.
Vector-valued spaces and functions will be denoted by bold symbols.
Denoting by $\trace_\omega$ the trace operator acting on $H^1(\omega)$, we define trace spaces
\begin{align*}
  H^{1/2}(\partial\omega) := \left\{ \trace_\omega u;\; u\in H^1(\omega) \right\}
  \quad\text{ and its dual }\quad H^{-1/2}(\partial\omega) := \bigl(H^{1/2}(\partial\omega)\bigr)'
\end{align*}
and use the canonical norms.
Here, duality is understood with respect to $L_2(\partial\omega)$ as a pivot space,
i.e., using the extended $L_2(\partial\omega)$ inner product $\dual{\cdot}{\cdot}_{\partial\omega}$.
The $L_2(\Omega)$ inner product will be denoted by $\dual{\cdot}{\cdot}_\Omega$.
Let $\TT$ denote a disjoint partition of $\Omega$ into open Lipschitz sets $\el\in\TT$,
i.e., $\cup_{\el\in\TT}\overline\el = \overline\Omega$.
The set of all boundaries of all elements forms the skeleton
$\cS := \left\{ \partial\el \mid \el\in\TT \right\}$.
By $\nn_M$ we mean the outer normal vector on $\partial M$ for a Lipschitz set $M$.
On a partition $\TT$ we use product spaces $H^1(\TT)$ and $\HH(\div,\TT)$ and equip them
with corresponding product norms.
The symbols $\pwnabla$ and $\pwdiv$ denote, respectively, the $\TT$-piecewise gradient
and divergence operators. On the skeleton $\cS$ of $\TT$ we introduce the trace spaces
\begin{align*}
  H^{1/2}(\cS) &:=
  \Big\{ \wat u \in \Pi_{\el\in\TT}H^{1/2}(\partial\el);\;
         \exists w\in H^1(\Omega) \text{ such that } 
         \wat u|_{\partial\el} = w|_{\partial\el}\; \forall \el\in\TT \Big\},\\
  H^{-1/2}(\cS) &:=
  \Big\{ \wat\sigma \in \Pi_{\el\in\TT}H^{-1/2}(\partial\el);\;
         \exists \qq\in\HH(\div,\Omega) \text{ such that } 
         \wat\sigma|_{\partial\el} = (\qq\cdot\nn_{\el})|_{\partial\el}\; \forall\el\in\TT \Big\}.
\end{align*}
These spaces are equipped with the norms
\begin{subequations} \label{Hpm}
\begin{align}
  \norm{\wat u}{H^{1/2}(\cS)} &:=
  \inf \left\{ \norm{w}{H^1(\Omega)};\; w\in H^1(\Omega) \text{ such that }
               \wat u|_{\partial\el}=w|_{\partial\el}\; \forall\el\in\TT \right\},\\
  \norm{\wat\sigma}{H^{-1/2}(\cS)} &:=
  \inf \left\{ \norm{\qq}{\HH(\div,\Omega)};\; \qq\in\HH(\div,\Omega) \text{ such that }
               \wat\sigma|_{\partial\el}=(\qq\cdot\nn_{\el})|_{\partial\el}\; \forall\el\in\TT \right\}.
\end{align}
\end{subequations}
For our analysis we will also need the space
\begin{align*}
  H^{1/2}_{00}(\cS) &:=
  \bigl\{ \wat u\in H^{1/2}(\cS);\; \wat u|_\Gamma = 0 \bigr\}.
\end{align*}
For functions $\wat u\in H^{1/2}(\cS)$, $\wat\sigma\in H^{-1/2}(\cS)$
(which are elements of product spaces with components
$\wat u|_{\partial\el}\in H^{1/2}(\partial\el)$ and
$\wat\sigma|_{\partial\el}\in H^{-1/2}(\partial\el)$, respectively, for $\el\in\TT$),
and $\ttau\in\HH(\div,\TT)$, $v\in H^1(\TT)$ we use the notation
\begin{align*}
  \dual{\wat u}{\ttau\cdot\nn}_\cS
  := \sum_{\el\in\TT}\dual{\wat u|_{\partial\el}}{\ttau\cdot\nn_\el}_{\partial\el},\quad
  \dual{\wat\sigma}{v}_\cS
  := \sum_{\el\in\TT}\dual{\wat\sigma|_{\partial\el}}{v}_{\partial\el}.
\end{align*}
Furthermore, throughout the paper, suprema are taken over sets excluding the null element,
and the notation $A\lesssim B$ is used to say that $A\leq C\cdot B$ with a constant $C>0$ which
does not depend on any quantities of interest.
Correspondingly, the notation $A\gtrsim B$ is used,
and $A\simeq B$ means that $A\lesssim B$ and $B\lesssim A$.

\subsection{Boundary integral operators} \label{sec_BIO}
The exterior part of problem \eqref{tp} will be dealt with by boundary integral operators.
To this end we need some further definitions. The fundamental solution of the Laplacian is
\begin{align*}
  G(z) :=
    \begin{cases}
    -\frac{1}{2\pi}\log\abs{z}\quad& (d=2),\\
    \frac{1}{4\pi}\frac{1}{\abs{z}}& (d=3),
  \end{cases}
\end{align*}
and the corresponding single layer and double layer potentials are
\begin{align*}
  \slp\phi(x) := \int_\Gamma G(x-y)\phi(y)\,ds_y,\quad
  \dlp v(x) := \int_\Gamma \partial_{\nn_\Omega(y)}G(x-y)v(y)\,ds_y,
  \quad x\in \R^d\setminus\Gamma.
\end{align*}
Application of the trace operator $\trace_\Omega$ and the normal derivative
$\partial_{\nn_\Omega}$ gives rise to the three boundary integral operators
\begin{align}\label{slo_dlo}
  \begin{split}
  \slo := \trace_\Omega \slp,\qquad
  \dlo := 1/2 + \trace_\Omega\dlp,\qquad
  \hyp := -\partial_{\nn_\Omega} \dlp.
  \end{split}
\end{align}
They are the \textit{single layer}, \textit{double layer},
and \textit{hypersingular operators}, respectively. The adjoint operator of $\dlo$ is denoted by $\adlo$.
These operators are linear and bounded as mappings
$\slo: H^{-1/2}(\Gamma)\rightarrow H^{1/2}(\Gamma)$,
$\dlo: H^{1/2}(\Gamma)\rightarrow H^{1/2}(\Gamma)$,
$\adlo:H^{-1/2}(\Gamma)\rightarrow H^{-1/2}(\Gamma)$, and
$\hyp: H^{1/2}(\Gamma)\rightarrow H^{-1/2}(\Gamma)$.
We note that here holds $\ker(\hyp)=\mathrm{span}\{1\}$.
For $(u,\phi)\in H^{1/2}(\Gamma)\times H^{-1/2}(\Gamma)$ let us define the operators
\begin{align*}
  \V(u,\phi) &:= \slo\phi + (1/2-\dlo)u,\qquad
  \W(u,\phi) := \hyp u + (1/2+\adlo)\phi.
\end{align*}
By the boundedness of $\slo$, $\dlo$, $\adlo$, and $\hyp$, it follows that
\begin{align}\label{op_bound}
  \begin{split}
    \V&:\; H^{1/2}(\Gamma)\times H^{-1/2}(\Gamma)\to H^{1/2}(\Gamma),\\
    \W&:\; H^{1/2}(\Gamma)\times H^{-1/2}(\Gamma)\to H^{-1/2}(\Gamma)
  \end{split}
\end{align}
are bounded.
Furthermore, for the exterior Cauchy data of the harmonic function $u^c\in H^1_{\mathrm{loc}}(\Omc)$
(see~\eqref{tpb}) we have the so-called Calder\'on system
\begin{subequations}\label{BIE}
\begin{align}\label{BIEa}
  \V(u^c|_\Gamma, \partial_{\nn_\Omega}u^c) = 0,\\
  \W(u^c|_\Gamma, \partial_{\nn_\Omega}u^c) = 0.\label{BIEb}
\end{align}
\end{subequations}
For details and proofs we refer to classical references, e.g.
\cite{Costabel_88_BIO,McLean_00_SES,HsiaoW_08_BIE}.

\subsection{Ultra-weak finite element part}\label{sec_fem}
In this section we recall an ultra-weak variational formulation of the
interior part of the transmission problem \eqref{tp}. This is taken from \cite{DemkowiczG_11_ADM}
and corresponds to problem \eqref{VF} of the abstract framework in Section~\ref{sec_abstract}.

To this end, let $\TT$ be a partition of $\Omega$ with skeleton $\cS$.
Corresponding to $\TT$ we define fractional-order spaces $H^{1/2}(\cS)$ and
$H^{-1/2}(\cS)$ as in Section~\ref{sec_Sob}, and select
\[
   U := L_2(\Omega)\times\LL_2(\Omega)\times H^{1/2}(\cS)\times H^{-1/2}(\cS),\quad
   V := H^1(\TT)\times\HH(\div,\TT).
\]
The broken test space $V$ is provided with the canonical inner product denoted (as previously)
by $\ip{\cdot}{\cdot}_V$. Furthermore, we define the bilinear and linear forms $b$ and $L_V$ by
\begin{align} \label{ultra_bil_L}
   b(\uu,\vv)
   &:=
   \vdual{u}{\pwdiv\ttau}_\Omega + \vdual{\ssigma}{\pwnabla v+\ttau}_\Omega -
   \dual{\wat u}{\ttau\cdot\nn}_\cS - \dual{\wat\sigma}{v}_\cS,\quad
   L_V(\vv) := \vdual{f}{v}_\Omega
\end{align}
for $\uu=(u,\ssigma,\wat u,\wat\sigma)\in U$, $\vv=(v,\ttau)\in V$.

With this setting, \eqref{VF} is our ultra-weak variational formulation of the interior part
\eqref{tpa} of the transmission problem. For reference, we explicitly specify its strong form
\begin{align} \label{BuL}
   \uu:=(u,\ssigma,\wat u,\wat\sigma)\in U:\quad B\uu = L_V.
\end{align}
The weighted trial-to-test operator $\Theta_\beta$ is exactly as in \eqref{ttt}.

Contrary to~\cite{DemkowiczG_11_ADM} (where the model problem considers a homogeneous Dirichlet
boundary condition), our transmission problem uses a space $U$ whose third component
$H^{1/2}(\cS)$ does not incorporate such a condition on $\Gamma$.
We therefore anticipate that the operator $B:\;U\to V'$ has a non-trivial kernel whereas it is known
that
\[
   B:\;U_0:=L_2(\Omega)\times\LL_2(\Omega)\times H^{1/2}_{00}(\cS)\times H^{-1/2}(\cS)\ \to\ V'
\]
is invertible:

\begin{lemma}\label{lem_dg11}
  The bilinear form $b:\; U\times V\rightarrow\R$ is bounded with a constant
  independent of $\TT$. Furthermore, there holds
  \begin{align*}
    \sup_{\vv\in V} \frac{b(\uu_0,\vv)}{\norm{\vv}{V}}
    &\gtrsim \norm{\uu_0}{U} \quad\text{ for all }\uu_0 \in U_0, \text{ and}\\
    \sup_{\uu_0\in U_0} \frac{b(\uu_0,\vv)}{\norm{\uu_0}{U}}
    &> 0 \quad\text{ for all }\vv \in V\setminus\{0\}.
  \end{align*}
\end{lemma}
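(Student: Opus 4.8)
The plan is to establish the three claimed properties separately, using the structure of the ultra-weak formulation and the known results of Demkowicz--Gopalakrishnan. First I would verify boundedness: each of the four terms in $b(\uu,\vv)$ is a bounded bilinear form on the product spaces. The volume terms $\vdual{u}{\pwdiv\ttau}_\Omega$ and $\vdual{\ssigma}{\pwnabla v+\ttau}_\Omega$ are estimated by Cauchy--Schwarz against the $L_2(\Omega)$, $\HH(\div,\TT)$ and $H^1(\TT)$ norms. For the skeleton terms, by definition of the norms $\norm{\cdot}{H^{1/2}(\cS)}$ and $\norm{\cdot}{H^{-1/2}(\cS)}$ in \eqref{Hpm}, one picks near-minimal extensions $w\in H^1(\Omega)$ resp.\ $\qq\in\HH(\div,\Omega)$ of $\wat u$, $\wat\sigma$; then $\dual{\wat u}{\ttau\cdot\nn}_\cS=\sum_\el\dual{w}{\ttau\cdot\nn_\el}_{\partial\el}$ equals, after elementwise integration by parts, $\vdual{w}{\pwdiv\ttau}_\Omega+\vdual{\nabla w}{\ttau}_\Omega$, which is bounded by $\norm{w}{H^1(\Omega)}\norm{\ttau}{\HH(\div,\TT)}\lesssim\norm{\wat u}{H^{1/2}(\cS)}\norm{\vv}{V}$, and similarly for the last term. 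All constants are $\TT$-independent because the trace space norms are defined via global extensions.

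For the inf--sup condition on $U_0$, the key observation is that on $U_0$ the third component carries the homogeneous condition $\wat u|_\Gamma=0$, so $b$ restricted to $U_0\times V$ coincides exactly with the bilinear form analyzed in \cite{DemkowiczG_11_ADM} for the ultra-weak formulation of $-\Delta u=f$ with homogeneous Dirichlet data. Hence I would simply invoke that reference: the operator $B:U_0\to V'$ is an isomorphism, which yields the first displayed estimate with a $\TT$-independent constant. Concretely, the argument there constructs, for given $\uu_0=(u,\ssigma,\wat u,\wat\sigma)\in U_0$, a test function $\vv\in V$ by solving elementwise adjoint problems: one takes $\ttau$ and $v$ on each $\el$ so that $\pwdiv\ttau$, $\pwnabla v+\ttau$ reproduce $u$ and $\ssigma$, and the jump terms pick up the skeleton components; well-posedness of these local problems together with a duality/trace argument gives $b(\uu_0,\vv)\gtrsim\norm{\uu_0}{U}\norm{\vv}{V}$. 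This is standard DPG machinery and I would not reproduce it.

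For the second inf--sup condition --- nondegeneracy of $b$ in the first argument over $U_0$ for every fixed $\vv\neq0$ --- I would argue by contradiction: suppose $b(\uu_0,\vv)=0$ for all $\uu_0\in U_0$. Testing with $\uu_0=(u,0,0,0)$ for arbitrary $u\in L_2(\Omega)$ forces $\pwdiv\ttau=0$ on each $\el$; testing with $(0,\ssigma,0,0)$ forces $\pwnabla v+\ttau=0$, hence $\ttau=-\pwnabla v$ is piecewise a gradient with vanishing piecewise divergence, i.e.\ $v$ is piecewise harmonic; testing with $(0,0,0,\wat\sigma)$ for all $\wat\sigma\in H^{-1/2}(\cS)$ forces $v$ to be single-valued across interior faces (its jumps vanish when tested against normal traces of $\HH(\div,\Omega)$ functions), so $v\in H^1_0$-type globally up to the boundary behavior; finally testing with $(0,0,\wat u,0)$ for all $\wat u\in H^{1/2}_{00}(\cS)$ forces $\ttau\cdot\nn$ to be continuous across interior faces, so $\ttau\in\HH(\div,\Omega)$ with $\div\ttau=0$ and $\ttau=-\nabla v$ in $\Omega$; then $\Delta v=0$ in $\Omega$ and, since $\wat u$ is allowed to be any function vanishing on $\Gamma$ but arbitrary on interior faces, the boundary term forces no constraint on $\trace_\Omega v$ on $\Gamma$, but combining $-\nabla v=\ttau\in\HH(\div,\Omega)$ with $v\in H^1(\Omega)$ harmonic and with the remaining freedom one concludes $v=0$ and $\ttau=0$. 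Thus $\vv=0$, a contradiction.

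The main obstacle is the last step: assembling the pointwise/elementwise conclusions into a clean global statement requires care about exactly which jumps are controlled by which test component (normal traces of $\HH(\div,\Omega)$ versus traces of $H^1(\Omega)$), and about the role of $\Gamma$, where $H^{1/2}_{00}(\cS)$ imposes no constraint. The cleanest route is to note that the second inf--sup condition is equivalent to injectivity of the adjoint $B^*:V\to U_0'$, and then to recognize that $B^*\vv=0$ is precisely a broken-space characterization of $\vv$ being a (distributional) solution of the homogeneous adjoint problem on $\Omega$ with the conformity enforced by the skeleton tests; since that adjoint problem has only the trivial solution in $V$ when the relevant jump conditions are imposed, we are done. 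Again this is exactly the argument of \cite{DemkowiczG_11_ADM}, adapted to the fact that here $\Gamma$ carries no essential condition, so I would state it as a direct consequence of their analysis restricted to $U_0$.
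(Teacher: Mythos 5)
Your proposal is correct and follows essentially the same route as the paper: boundedness is treated as immediate from the definitions of the skeleton norms, and both inf-sup statements are imported from \cite{DemkowiczG_11_ADM} (the paper cites the converse inf-sup condition of Theorem~4.2 and the non-degeneracy of Lemma~4.1 there and concludes by Babu\v{s}ka--Brezzi theory, which is exactly the ``cleanest route'' you end up recommending). One small caution on your sketched contradiction argument: the tests with $\wat\sigma\in H^{-1/2}(\cS)$ do constrain the trace of $v$ on $\Gamma$ (since $\wat\sigma|_\Gamma$ ranges over all of $H^{-1/2}(\Gamma)$), forcing $v\in H^1_0(\Omega)$; it is precisely this, not some ``remaining freedom,'' that lets you conclude $v=0$ for the piecewise-harmonic $v$ and hence $\ttau=0$.
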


\begin{proof}
  The boundedness of $b$ is immediate by the definition of the norms.
  The remaining results follow from~\cite{DemkowiczG_11_ADM}. More specifically,
  by \cite[Theorem~4.2]{DemkowiczG_11_ADM} there holds the converse inf-sup condition
  $\norm{B\vv}{U_0'}\gtrsim\norm{\vv}{V}$ $\forall \vv\in V$,
  and $b$ is non-degenerate in the
  $U_0$-component by~\cite[Lemma~4.1]{DemkowiczG_11_ADM}.
  The statement then follows by the Babu\v{s}ka-Brezzi theory.
\end{proof}

Obviously, the non-uniqueness of \eqref{BuL} is due to missing boundary conditions. They
will be incorporated by Cauchy-data relations that stem from the transmission conditions
\eqref{tpc}, \eqref{tpd} and the exterior problem \eqref{tpb}, \eqref{tpe}.
For the handling of these data we need the following restriction operators.
\begin{align*}
  \begin{aligned}
    \gamma&:\;U\to H^{1/2}(\Gamma)\times H^{-1/2}(\Gamma),&\quad&
    \gamma(u,\ssigma,\wat u,\wat\sigma):=(\wat u|_\Gamma,\wat\sigma|_\Gamma)
    &\qquad&\text{(Cauchy data)},\\
    \gamma_0&:\;U\to H^{1/2}(\Gamma),&\quad&
    \gamma_0(u,\ssigma,\wat u,\wat\sigma):=\wat u|_\Gamma
    &\qquad&\text{(Dirichlet trace),} \\
    \gamma_\nn&:\;U\to H^{-1/2}(\Gamma),&\quad&
    \gamma_\nn(u,\ssigma,\wat u,\wat\sigma):=\wat \sigma|_\Gamma
    &\qquad&\text{(Neumann trace)}.
  \end{aligned}
\end{align*}
The boundedness of these operators is immediate.
\begin{lemma} \label{la_restriction}
  The operators $\gamma$, $\gamma_\nn$ and $\gamma_0$ are bounded.
\end{lemma}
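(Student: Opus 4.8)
The plan is to verify boundedness of each operator directly from the definition of the broken trace norms in \eqref{Hpm} as infima over $H^1(\Omega)$- (respectively, $\HH(\div,\Omega)$-) extensions, combined with the classical boundedness of the trace operator $\trace_\Omega:H^1(\Omega)\to H^{1/2}(\Gamma)$ and of the normal trace operator $\HH(\div,\Omega)\to H^{-1/2}(\Gamma)$.

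First I would treat $\gamma_0$. Fix $\uu=(u,\ssigma,\wat u,\wat\sigma)\in U$ and let $w\in H^1(\Omega)$ be any function with $\wat u|_{\partial\el}=w|_{\partial\el}$ for all $\el\in\TT$. Since $\Gamma\subset\cup_{\el\in\TT}\partial\el$ and the element-boundary data of $\wat u$ are mutually consistent on shared faces, $\wat u|_\Gamma$ is a well-defined element of $H^{1/2}(\Gamma)$ and coincides with $\trace_\Omega w$. Hence $\norm{\gamma_0\uu}{H^{1/2}(\Gamma)}=\norm{\trace_\Omega w}{H^{1/2}(\Gamma)}\lesssim\norm{w}{H^1(\Omega)}$, and taking the infimum over all admissible $w$ gives $\norm{\gamma_0\uu}{H^{1/2}(\Gamma)}\lesssim\norm{\wat u}{H^{1/2}(\cS)}\le\norm{\uu}{U}$.

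Next, $\gamma_\nn$ is handled in the same way: for any $\qq\in\HH(\div,\Omega)$ with $\wat\sigma|_{\partial\el}=(\qq\cdot\nn_\el)|_{\partial\el}$ for all $\el\in\TT$, the normal component on $\Gamma$ equals $\qq\cdot\nn_\Omega|_\Gamma$ because $\nn_\el=\nn_\Omega$ on $\Gamma\cap\partial\el$; thus $\gamma_\nn\uu=\wat\sigma|_\Gamma$ is well defined and the boundedness of the normal trace operator yields $\norm{\gamma_\nn\uu}{H^{-1/2}(\Gamma)}\lesssim\norm{\qq}{\HH(\div,\Omega)}$. The infimum over admissible $\qq$ then gives $\norm{\gamma_\nn\uu}{H^{-1/2}(\Gamma)}\lesssim\norm{\wat\sigma}{H^{-1/2}(\cS)}\le\norm{\uu}{U}$. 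Finally, $\gamma=(\gamma_0,\gamma_\nn)$ is bounded as the product of the two bounded operators just established. The only point deserving care is the well-definedness of the boundary traces — that $\wat u|_\Gamma$ and $\wat\sigma|_\Gamma$ are independent of the chosen extension — which follows from the inter-element compatibility built into the definitions of $H^{1/2}(\cS)$ and $H^{-1/2}(\cS)$; beyond this bookkeeping there is no genuine obstacle.
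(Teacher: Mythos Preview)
Your proof is correct and follows essentially the same route as the paper: bound the $H^{1/2}(\Gamma)$ and $H^{-1/2}(\Gamma)$ norms by the norm of an arbitrary admissible extension (via the trace and normal-trace theorems), then take the infimum to obtain the skeleton norm. The paper's version is slightly tighter for $\gamma_0$ --- since $H^{1/2}(\Gamma)$ carries the quotient norm, one actually gets $\norm{\gamma_0\uu}{H^{1/2}(\Gamma)}\le\norm{\wat u}{H^{1/2}(\cS)}$ with constant one rather than $\lesssim$ --- but this is a cosmetic difference.
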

\begin{proof}
By definition of $H^{1/2}(\Gamma)$ and its norm,
$\|\wat u\|_{H^{1/2}(\Gamma)}\le \|u\|_{H^1(\Omega)}$ for any $u\in H^1(\Omega)$ with $u=\wat u$
on $\Gamma$. That is, $\|\wat u\|_{H^{1/2}(\Gamma)}\le \|\wat u\|_{H^{1/2}(\cS)}$ by the
definition of the latter norm. Analogously, making use of the boundedness of the normal
component trace from $\HH(\div,\Omega)$ to $H^{-1/2}(\Gamma)$, we obtain
\[
   \|\wat\sigma\|_{H^{-1/2}(\Gamma)}
   \lesssim
   \inf\{\|\ssigma\|_{\HH(\div,\Omega)};\; \ssigma\cdot\nn_\Omega = \wat\sigma\ \text{on}\ \Gamma\}
   \le
   \|\wat\sigma\|_{H^{-1/2}(\cS)}.
\]
\end{proof}

\subsection{Four coupling methods} \label{sec_schemes}
In Section~\ref{sec_fem} we have fixed the interior part of the variational formulation
for the transmission problem \eqref{tp}. This corresponds to the abstract form \eqref{VF}.
It remains to add an equation \eqref{Cauchy} to incorporate the exterior part of the problem
and transmission conditions. In the next subsection we consider a boundary integral
equation of least-squares type for \eqref{Cauchy},
and in Subsections~\ref{sec_W},~\ref{sec_V},~\ref{sec_C} we study three Galerkin variants, with
hypersingular operator, single layer operator, and both operators, respectively (all with additional
rank-one terms).

\subsubsection{Coupling with least-squares boundary elements} \label{sec_LS}
We couple equation~\eqref{BuL} with the exterior problem by using the boundary integral equation
\eqref{BIEa} and jump relations~\eqref{tpc},~\eqref{tpd}.
With the operator $\V$ this yields the relation
\begin{align} \label{op}
   \V(\gamma\uu) = \V(u_0,\phi_0) \quad\text{in}\ H^{1/2}(\Gamma).
\end{align}
Denoting the inner product in $H^{1/2}(\Gamma)$ by $\ip{\cdot}{\cdot}_{H^{1/2}(\Gamma)}$,
a least-squares formulation of \eqref{op} is
\begin{align*}
  \ip{\V(\gamma\uu)}{\V(\gamma\ww)}_{H^{1/2}(\Gamma)}
  &= \ip{\V(u_0,\phi_0)}{\V(\gamma\ww)}_{H^{1/2}(\Gamma)}
  \quad\forall\ww\in U.
\end{align*}
According to the abstract framework from Section~\ref{sec_abstract}, we define
\begin{align*}
  c(\uu,\ww) &:= c_{\LS}(\uu,\ww) := \ip{\V(\gamma\uu)}{\V(\gamma\ww)}_{H^{1/2}(\Gamma)},\\
  L_U(\ww) &:= L_\LS(\ww) := \ip{\V(u_0,\phi_0)}{\V(\gamma\ww)}_{H^{1/2}(\Gamma)}.
\end{align*}
Then our DPG formulation coupled with a least-squares boundary integral equation is
\eqref{eq_dpgcoupling}:
\begin{align}\label{eq:lsc} 
  \uu\in U:\quad
  b(\uu,\Theta_\beta\ww) + c_\LS(\uu,\ww)
  = L_V(\Theta_\beta\ww) + L_\LS(\ww) \quad\forall \ww\in U.
\end{align}
The bilinear form $b$ and functional $L_V$ have been defined in Section~\ref{sec_fem}.
The coupled scheme \eqref{eq:lsc} is equivalent to the least-squares formulation
\begin{align} \label{LS}
   \uu=(u,\ssigma,\wat u,\wat\sigma)\in U:\quad
   \uu = \argmin_{\ww\in U} J_\beta(\ww; f,u_0,\phi_0)
\end{align}
with
\[
   J_\beta(\uu;f,u_0,\phi_0)
   :=
   \beta \|B\uu-L_V\|_{V'}^2 + \|\V(\gamma\uu) - \V(u_0,\phi_0)\|_{H^{1/2}(\Gamma)}^2.
\]
The corresponding discretization is \eqref{eq:lsc} with $U$ replaced by a finite-dimensional subspace
$U_\hp$, cf.~\eqref{eq_dpgcoupling_discrete}, or equivalently
\begin{align} \label{LS_discrete}
   \uu_\hp\in U_\hp:\quad
   \uu_\hp = \argmin_{\ww\in U_\hp} J_\beta(\ww; f,u_0,\phi_0).
\end{align}
Principal advantage of this scheme over the following ones is that, for scaling
parameter $\beta=1$, the resulting bilinear form is known to be elliptic so that the formulation
is well posed and its discretization converges quasi-optimally.

\begin{theorem} \label{thm_equiv}
  Let $\beta=1$. Problems~\eqref{tp} and~\eqref{LS} are equivalent and uniquely solvable.
  More precisely, let $(u,u^c)$ be the solution to \eqref{tp} and define
  $\uu:=(u,\ssigma,\wat u,\wat\sigma)$ with $\ssigma:=\nabla u$ in $\Omega$,
  $\wat u:=u|_\cS$ (element-wise), and $\wat\sigma:=\ssigma\cdot\nn_\el$ on $\partial\el$
  for any $\el\in\TT$. Then, $\uu$ satisfies \eqref{LS}.
  On the other hand, if $\uu=(u,\ssigma,\wat u,\wat\sigma)$ solves \eqref{LS},
  then $(u,u^c)$ with $u^c:=\dlp(\wat u|_\Gamma-u_0) - \slp(\wat\sigma|_\Gamma - \phi_0)$
  solves \eqref{tp}.

  Furthermore, for a finite-dimensional subspace $U_\hp\subset U$, there is a unique 
  solution $\uu_\hp\in U_\hp$ of \eqref{LS_discrete}, and there holds the best approximation
  property
  \begin{align*}
    \norm{\uu-\uu_\hp}{U} \lesssim
    \inf_{\ww\in U_\hp} \norm{\uu-\ww}{U}
  \end{align*}
  with hidden constant independent of $\TT$ and the polynomial degrees in $U_\hp$.
\end{theorem}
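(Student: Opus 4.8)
The plan is to invoke Lemma~\ref{lem_dpgcoupling} with $\beta=1$, so the whole theorem reduces to verifying its hypotheses—boundedness of $b$ and $c_\LS$, and $U$-ellipticity of $b(\ww,\Theta_1\ww)+c_\LS(\ww,\ww)$—together with the equivalence of solutions asserted in the first part of the statement. Boundedness of $b$ is Lemma~\ref{lem_dg11}; boundedness of $c_\LS$ follows from boundedness of $\V$ in \eqref{op_bound}, boundedness of the Cauchy trace $\gamma$ (Lemma~\ref{la_restriction}), and Cauchy--Schwarz in $H^{1/2}(\Gamma)$. Given these, Lemma~\ref{lem_dpgcoupling} delivers unique solvability of \eqref{eq:lsc} (equivalently \eqref{LS}, since for $\beta=1$ the DPG-coupled bilinear form is exactly the normal equations of the least-squares functional $J_1$) and the quoted best-approximation estimate for $\uu_\hp$, with constant $(\beta C_b^2+C_c)/\alpha$ independent of $\TT$ and polynomial degrees because each of $C_b$, $C_c$, $\alpha$ is.

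The heart of the argument is ellipticity. First I would note the DPG identity $b(\ww,\Theta_1\ww)=\norm{B\ww}{V'}^2$ (from the definition \eqref{ttt} of the trial-to-test operator and the Riesz isometry used in the proof of Lemma~\ref{lem_dpgcoupling}), so that $b(\ww,\Theta_1\ww)+c_\LS(\ww,\ww)=\norm{B\ww}{V'}^2+\norm{\V(\gamma\ww)}{H^{1/2}(\Gamma)}^2=J_1(\ww;0,0,0)$. I must bound this below by $\norm{\ww}{U}^2$. The obstacle is that $B$ alone is \emph{not} bounded below on $U$—it is bounded below only on $U_0$, the subspace with vanishing Dirichlet trace on $\Gamma$ (Lemma~\ref{lem_dg11}), and its kernel is governed by harmonic functions in $\Omega$ with matching traces on the skeleton. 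The key structural fact is that the kernel of $B$ on $U$ consists precisely of $\uu=(u,\nabla u,u|_\cS,\nabla u\cdot\nn)$ with $\Delta u=0$ in $\Omega$; for such $\uu$, $\V(\gamma\uu)=\slo(\partial_{\nn_\Omega}u)+(1/2-\dlo)(u|_\Gamma)$ is, up to sign, the first Calderón equation for the \emph{interior} trace data, and this vanishes iff the pair $(u|_\Gamma,\partial_{\nn_\Omega}u)$ is the Cauchy data of a function harmonic in the \emph{exterior} domain with the decay \eqref{tpe}. By the unique solvability of the exterior Dirichlet problem (and the corresponding injectivity/ellipticity of the exterior Calderón operator, cf.\ the references in Section~\ref{sec_BIO}), this forces $u|_\Gamma=0$, hence $u\equiv 0$ in $\Omega$ by uniqueness of the interior harmonic extension. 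Thus $\ker B\cap\ker c_\LS=\{0\}$.

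To turn this injectivity into a quantitative ellipticity estimate I would argue by contradiction and compactness, or equivalently by a standard Peetre--Tartar / generalized Poincaré argument: decompose $\ww=\ww_0+\ww_\perp$ relative to $U_0$, control $\ww_0$ by $\norm{B\ww_0}{V'}\lesssim\norm{B\ww}{V'}+\norm{\ww_\perp}{U}$, and control the finite (or compact) complementary part by combining $\norm{B\ww}{V'}$ and $\norm{\V(\gamma\ww)}{H^{1/2}(\Gamma)}$ using the injectivity just established, the compact embedding of trace spaces, and the injectivity of the exterior Calderón operator on the relevant subspace. This yields $\alpha>0$ with $J_1(\ww;0,0,0)\ge\alpha\norm{\ww}{U}^2$. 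The main obstacle—and the step I expect to occupy the bulk of the detailed proof in Section~\ref{sec_LS_tech}—is exactly this passage from the kernel characterization to a uniform constant $\alpha$; everything else (boundedness, the equivalence with \eqref{tp} via the representation $u^c=\dlp(\wat u|_\Gamma-u_0)-\slp(\wat\sigma|_\Gamma-\phi_0)$, checking it satisfies \eqref{tpa}--\eqref{tpe}, and the final appeal to C\'ea's lemma) is routine once $\alpha$ is in hand.
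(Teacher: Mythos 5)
Your overall reduction is the same as the paper's: verify the hypotheses of Lemma~\ref{lem_dpgcoupling} with $\beta=1$, use $b(\ww,\Theta_1\ww)=\norm{B\ww}{V'}^2$, note that $B$ is only bounded below on $U_0$ (Lemma~\ref{lem_dg11}), and treat the kernel of $B$ via the boundary term. The gap is in how you pass from the kernel characterization to a uniform ellipticity constant. The paper does \emph{not} need injectivity-plus-compactness: its Lemma~\ref{la_op} shows, via the representation formula and the interior Calder\'on identity, that $\V(\gamma\uu)=\gamma_0\uu$ \emph{exactly} for every $\uu\in\ker B=\ext H^{1/2}(\Gamma)$ (Lemma~\ref{la_kerB}). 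With this identity the ellipticity estimate is a short triangle-inequality computation (see \eqref{pf_equiv_1}--\eqref{pf_equiv_3} in Section~\ref{sec_pf_equiv}): $\norm{\ext\gamma_0\uu}{U}\lesssim\norm{\gamma_0\uu}{H^{1/2}(\Gamma)}=\norm{\V(\gamma\ext\gamma_0\uu)}{H^{1/2}(\Gamma)}\le\norm{\V(\gamma\uu)}{H^{1/2}(\Gamma)}+\norm{\V(\gamma(\uu-\ext\gamma_0\uu))}{H^{1/2}(\Gamma)}$, and the last term is absorbed by $\norm{B\uu}{V'}$. All constants come from Lemma~\ref{lem_dg11}, the continuity of $\ext$, $\V$, $\gamma$, and are therefore manifestly independent of $\TT$.

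Your substitute argument is not carried out and, as sketched, does not close. First, the complementary part is not ``finite (or compact)'': $\ker B\cong H^{1/2}(\Gamma)$ via $\ext$, so you must bound the operator $T:\varphi\mapsto\V(\gamma\ext\varphi)=\slo(\partial_{\nn_\Omega}\widetilde u)+(1/2-\dlo)\varphi$ from below on an infinite-dimensional space. Injectivity (which you argue via the exterior problem) does not yield a lower bound there; you would additionally need $T$ to be Fredholm of index zero, or ``identity plus compact'', and on a Lipschitz boundary neither $\dlo$ nor $\slo$ composed with the Dirichlet-to-Neumann map is compact, so ``compact embedding of trace spaces'' does not supply this structure. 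The clean fact that does supply it is precisely the Calder\'on identity $\slo\,\partial_{\nn_\Omega}\widetilde u=(1/2+\dlo)\varphi$, i.e.\ $T=\mathrm{id}$, which is Lemma~\ref{la_op} --- in other words, your missing step is the paper's key lemma, not a routine compactness argument. Second, if one were to run a contradiction/compactness argument, it must be set up on the mesh-independent space $H^{1/2}(\Gamma)$ rather than on $U$; an argument in $U$ (whose skeleton components and norm depend on $\TT$) would not by itself justify the $\TT$-independence of the constant that the theorem asserts. Once the identity $\V(\gamma\cdot)=\gamma_0(\cdot)$ on $\ker B$ is in hand, the rest of your outline (boundedness, the equivalence with \eqref{tp} through $u^c=\dlp(\wat u|_\Gamma-u_0)-\slp(\wat\sigma|_\Gamma-\phi_0)$, and C\'ea's lemma via Lemma~\ref{lem_dpgcoupling}) matches the paper.
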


A proof of this theorem will be given in Section~\ref{sec_pf_equiv}.

\begin{remark} \label{rem2}
(i) In contrast to the method proposed in \cite{HeuerK_DPGb}, the trial-to-test operator
$\Theta:=\Theta_1$ used here can be implemented without considering any boundary integral operator.
Therefore, having defined the space $V$ as a product space of broken Sobolev spaces of integer orders,
the action of $\Theta$ is completely local and can be approximated in a standard way known from
DPG methods, cf.~\cite{GopalakrishnanQ_14_APD}.

(ii) The only difference of our trial-to-test operator with that of standard DPG schemes is that,
in our case, the bilinear form on the right-hand side defining $\Theta$ has a kernel. This does not
affect the implementation. Indeed, if there is $\ww\in U_{hp}\cap\ker B$ then $\Theta\ww=0$ and
equation~\eqref{eq:lsc} reduces to
\[
   \ip{\V(\gamma\uu_{hp})}{\gamma_0\ww}_{H^{1/2}(\Gamma)}
   =
   \ip{\V(u_0,\phi_0)}{\gamma_0\ww}_{H^{1/2}(\Gamma)},
\]
cf.~Lemmas~\ref{la_kerB} and~\ref{la_op}.

(iii) The boundary element least-squares term
$\ip{\V(\gamma\cdot)}{\V(\gamma\cdot)}_{H^{1/2}(\Gamma)}$ is of a standard type
and has to be approximated by using an appropriate preconditioner instead of the
$H^{1/2}(\Gamma)$-inner product. We give precise details in Section~\ref{sec_LS_impl} and
refer to \cite{GaticaHS_03_LSC,MaischakS_04_LSC,MaischakOS_12_LSF} for similar techniques.

(iv) It is also possible to define a least-squares coupling with equation~\eqref{BIEb} instead of~\eqref{BIEa}.
Due the fact that $\W(1,0)=0$, a (rank-one) stabilization term has to be added to ensure uniqueness. 
Here, the same term as for the coupling with Galerkin boundary elements can be used
(see Subsections~\ref{sec_W},~\ref{sec_V} below).
\end{remark}

\subsubsection{Coupling with Galerkin boundary elements (hypersingular operator)} \label{sec_W}
We proceed as in Subsection~\ref{sec_LS}, but couple equation~\eqref{BuL} with a variational form
of \eqref{BIEb} including transmission conditions instead of a least-squares form of \eqref{BIEa},
that is,
\begin{align*} 
  \dual{\W(\gamma\uu)}{\gamma_0\ww}_{\Gamma}
  &=
  \dual{\W(u_0,\phi_0)}{\gamma_0\ww}_{\Gamma}
  \quad\forall\ww\in U.
\end{align*}
Since $\W(1,0)=0$, a rank-one term has to be added for uniqueness. 
We therefore incorporate relation~\eqref{BIEa} combined with the transmission conditions,
and define the bilinear and linear forms
\begin{align*}
  c(\uu,\ww) &:= c_\BEW(\uu,\ww) := \dual{\W(\gamma\uu)}{\gamma_0\ww}_\Gamma
  + \dual{1}{\V(\gamma\uu)}_\Gamma\dual{1}{\V(\gamma\ww)}_\Gamma,\\
  L_U(\ww) &:= L_\BEW(\ww) := \dual{\W(u_0,\phi_0)}{\gamma_0\ww}_\Gamma
  + \dual{1}{\V(u_0,\phi_0)}_\Gamma\dual{1}{\V(\gamma\ww)}_\Gamma.
\end{align*}
Then, instead of \eqref{eq:lsc} we obtain the following coupling variant of \eqref{eq_dpgcoupling}:
\begin{align}\label{eq_W} 
  \uu\in U:\quad
  b(\uu,\Theta_\beta\ww) + c_\BEW(\uu,\ww)
  = L_V(\Theta_\beta\ww) + L_\BEW(\ww) \quad\forall \ww\in U.
\end{align}
Again, the bilinear form $b$ and functional $L_V$ are the ones from Section~\ref{sec_fem}.
Correspondingly, the discrete scheme is
\begin{align}\label{eq_W_discrete} 
  \uu_\hp\in U_\hp:\quad
  b(\uu_\hp,\Theta_\beta\ww) + c_\BEW(\uu_\hp,\ww)
  = L_V(\Theta_\beta\ww) + L_\BEW(\ww) \quad\forall \ww\in U_\hp.
\end{align}
It turns out that the bilinear form defined by the left-hand side of \eqref{eq_W} is $U$-elliptic
only for sufficiently large $\beta$. This is the reason for having introduced this parameter.
The following theorem is our second main result.

\begin{theorem}\label{thm_W}
  There exists a constant $\beta_0>0$ which depends only on $\Omega$
  such that for all $\beta\geq\beta_0$
  problems~\eqref{tp} and~\eqref{eq_W} are equivalent and uniquely solvable.
  More precisely, let $(u,u^c)$ be the solution to \eqref{tp} and define
  $\uu:=(u,\ssigma,\wat u,\wat\sigma)$ with $\ssigma:=\nabla u$ in $\Omega$,
  $\wat u:=u|_\cS$ (element-wise), and $\wat\sigma:=\ssigma\cdot\nn_\el$ on $\partial\el$
  for any $\el\in\TT$. Then, $\uu$ satisfies \eqref{eq_W}.
  On the other hand, if $\uu=(u,\ssigma,\wat u,\wat\sigma)$ solves \eqref{eq_W},
  then $(u,u^c)$ with $u^c:=\dlp(\wat u|_\Gamma-u_0) - \slp(\wat\sigma|_\Gamma - \phi_0)$
  solves \eqref{tp}.

  Furthermore, for a finite-dimensional subspace $U_\hp\subset U$, there is a unique 
  solution $\uu_\hp\in U_\hp$ of \eqref{eq_W_discrete}, and there holds the best approximation
  property
  \begin{align*}
    \norm{\uu-\uu_\hp}{U} \lesssim
    \beta \inf_{\ww\in U_\hp} \norm{\uu-\ww}{U}
  \end{align*}
  with hidden constant independent of $\TT$, $U_\hp$,
  and $\beta$ (under the restriction $\beta\ge\beta_0$).
\end{theorem}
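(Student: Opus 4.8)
The plan is to verify the hypotheses of Lemma~\ref{lem_dpgcoupling} for the bilinear forms $b$ and $c=c_\BEW$, and separately to establish the equivalence of~\eqref{tp} and~\eqref{eq_W}. Boundedness of $b$ is given by Lemma~\ref{lem_dg11}, and boundedness of $c_\BEW$ follows from~\eqref{op_bound}, the boundedness of $\gamma$ and $\gamma_0$ (Lemma~\ref{la_restriction}), and Cauchy--Schwarz; this gives constants $C_b$ and $C_c$. The heart of the matter is the ellipticity estimate
\[
   \alpha\norm{\ww}{U}^2 \le b(\ww,\Theta_\beta\ww) + c_\BEW(\ww,\ww)
   = \beta\norm{B\ww}{V'}^2 + \dual{\W(\gamma\ww)}{\gamma_0\ww}_\Gamma
     + \dual{1}{\V(\gamma\ww)}_\Gamma^2
\]
for all $\beta\ge\beta_0$. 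Once this is in hand, Lemma~\ref{lem_dpgcoupling} delivers unique solvability of~\eqref{eq_W} and the best-approximation bound with constant $(\beta C_b^2+C_c)/\alpha\lesssim\beta$; the linear functionals $L_V$, $L_\BEW$ are bounded by the same token, so~\eqref{eq_dpgcoupling_discrete} is also uniquely solvable.

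For the ellipticity I would argue as follows. Write $(g,\phi):=\gamma\ww\in H^{1/2}(\Gamma)\times H^{-1/2}(\Gamma)$ and split $g = g_0 + c_\ww$ with $c_\ww$ the mean value so that $\W(g,\phi)=\W(g_0,\phi)$ and $\dual{1}{\V(g,\phi)}_\Gamma$ controls $c_\ww$. The term $\dual{\W(\gamma\ww)}{\gamma_0\ww}_\Gamma$ is, by the coercivity of the hypersingular operator on $H^{1/2}(\Gamma)/\mathrm{span}\{1\}$ together with the ellipticity of $\V$ on the single-layer block, bounded below (up to lower-order terms handled via a compactness/Peetre argument, as in the Sayas--Steinbach approach the authors cite) by a constant times $\snorm{g}{H^{1/2}(\Gamma)}^2 - \mathrm{(compact)}$; the rank-one term recovers the missing constant, so altogether $c_\BEW(\ww,\ww)\gtrsim \norm{\gamma\ww}{}^2 - K(\ww)$ with $K$ involving a norm weaker than that of $\gamma\ww$ or a term already dominated by $\norm{B\ww}{V'}$. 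Meanwhile the DPG term $\beta\norm{B\ww}{V'}^2$ controls the full $U$-norm \emph{modulo} the boundary trace, because $B$ restricted to $U_0$ is an isomorphism (Lemma~\ref{lem_dg11}) and the kernel of $B$ on $U$ is exactly the one-dimensional-on-$\Gamma$ discrepancy that $c_\BEW$ sees; more precisely one shows $\norm{B\ww}{V'}^2 + \norm{\gamma\ww}{}^2 \gtrsim \norm{\ww}{U}^2$, and then takes $\beta$ large enough to absorb the compact remainder from the $c_\BEW$ estimate into $\beta\norm{B\ww}{V'}^2$ plus a small multiple of $\norm{\ww}{U}^2$. This absorption is what forces $\beta\ge\beta_0$ and fixes $\beta_0$ depending only on $\Omega$.

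For the equivalence I would proceed in both directions. Given the solution $(u,u^c)$ of~\eqref{tp}, setting $\uu$ as in the statement one checks $B\uu=L_V$ from~\eqref{tpa} (so the DPG residual vanishes) and $\W(\gamma\uu)=\W(u_0,\phi_0)$, $\dual{1}{\V(\gamma\uu)-\V(u_0,\phi_0)}_\Gamma=0$ from the Calder\'on identities~\eqref{BIE} applied to $u^c$ together with the transmission conditions~\eqref{tpc},~\eqref{tpd}; hence every term of~\eqref{eq_W} matches and $\uu$ solves it. Conversely, if $\uu$ solves~\eqref{eq_W}, then by unique solvability this $\uu$ coincides with the one constructed from the PDE solution, so testing with $\ww\in U_0\cap\ker B$ is not needed — instead one recovers $B\uu=L_V$ (giving the strong interior equations and $\ssigma=\nabla u$, $\wat u=u|_\cS$, $\wat\sigma=\ssigma\cdot\nn$) and then $\W(\gamma\uu)=\W(u_0,\phi_0)$, $\dual{1}{\V(\gamma\uu)}_\Gamma=\dual{1}{\V(u_0,\phi_0)}_\Gamma$; defining $u^c$ by the representation formula as stated and using the jump relations of the layer potentials one verifies~\eqref{tpb}--\eqref{tpe}. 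The decay~\eqref{tpe} in the case $d=2$ uses the compatibility condition $\int_\Omega f+\int_\Gamma\phi_0=0$, which here is forced by the single-layer constraint; the details mirror the classical FEM--BEM coupling analysis and I would relegate them to Section~\ref{sec_LS_tech}.

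The main obstacle is the ellipticity estimate, and within it the delicate point is showing that the lower-order (compact) remainders produced when extracting coercivity of $\W$ from the boundary term are genuinely dominated either by the DPG residual $\norm{B\ww}{V'}$ or by a norm strictly weaker than $\norm{\ww}{U}$, so that a Peetre--Tartar / compactness argument closes the gap and a finite $\beta_0=\beta_0(\Omega)$ suffices; keeping track of which constants depend only on $\Omega$ (and not on $\TT$ or the polynomial degrees) throughout this argument is the part that requires the most care.
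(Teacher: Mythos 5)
Your overall scaffolding (verify the hypotheses of Lemma~\ref{lem_dpgcoupling}, deduce solvability and the $\beta$-dependent quasi-optimality constant, and get the equivalence by plugging the exact solution into~\eqref{eq_W} and invoking uniqueness) matches the paper, and the bounds on $b$, $c_\BEW$ and the functionals are fine. But the core of your ellipticity argument has a genuine gap. You propose to bound $\dual{\W(\gamma\ww)}{\gamma_0\ww}_\Gamma$ from below by coercivity of $\hyp$ on $H^{1/2}(\Gamma)/\mathrm{span}\{1\}$ ``together with the ellipticity of $\V$'' up to compact remainders. This does not work: writing $g=\gamma_0\ww$, $\phi=\gamma_\nn\ww$, that term is $\dual{\hyp g}{g}_\Gamma+\dual{(\tfrac12+\adlo)\phi}{g}_\Gamma$, and for a general $\ww\in U$ the traces $g$ and $\phi$ are completely independent, so the cross term $\dual{(\tfrac12+\adlo)\phi}{g}_\Gamma$ is sign-indefinite, of full order in both arguments, and neither compact nor dominated by $\norm{B\ww}{V'}$ on its own (scale $\phi$ with $g$ fixed, choosing $\phi$ so that the rank-one term $|\dual{1}{\V(\gamma\ww)}_\Gamma|^2$ stays bounded). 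Moreover, $\V$ does not even appear in this pairing, and in $d=2$ ellipticity of $\slo$ would require $\diam(\Omega)<1$, a hypothesis Theorem~\ref{thm_W} deliberately avoids (it is needed only for Theorems~\ref{thm_V} and~\ref{thm_C}). Your description of $\ker B$ as a ``one-dimensional-on-$\Gamma$ discrepancy'' is also off: by Lemma~\ref{la_kerB}, $\ker B=\ext H^{1/2}(\Gamma)$ is infinite-dimensional.

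The missing idea is precisely the structural fact the paper isolates in Lemmas~\ref{la_kerB} and~\ref{la_op}: on $\ker B=\ext H^{1/2}(\Gamma)$ the Cauchy data are linked, and the Calder\'on operators reproduce them, $\V(\gamma\uu)=\gamma_0\uu$ and $\W(\gamma\uu)=\gamma_\nn\uu$. The paper therefore splits $\uu=(\uu-\ext\gamma_0\uu)+\ext\gamma_0\uu$, controls the first part by $\norm{B\uu}{V'}$ (Lemma~\ref{lem_dg11}), and on the kernel part computes exactly
\begin{align*}
  c_\BEW(\ext\gamma_0\uu,\ext\gamma_0\uu)
  = \dual{\gamma_\nn\ext\gamma_0\uu}{\gamma_0\uu}_\Gamma + |\dual{1}{\gamma_0\uu}_\Gamma|^2
  = \norm{\nabla\widetilde u}{L_2(\Omega)}^2 + |\dual{1}{\gamma_0\uu}_\Gamma|^2
  \gtrsim \norm{\ext\gamma_0\uu}{U}^2,
\end{align*}
so no coercivity of $\hyp$, no $\V$-ellipticity and no compactness/Peetre--Tartar step are needed; the indefinite cross term is then handled by boundedness of $c_\BEW$ and Young's inequality, which is exactly what produces the threshold $\beta_0$. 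Without this kernel identity (which, as the paper stresses, ties the analysis to identical interior and exterior operators), your absorption step has nothing to absorb the full-order cross term into, so the claimed ellipticity is not established.
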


A proof of this theorem is given in Section~\ref{sec_W_proof}.

\begin{remark}\label{rem_W}
(i) It can be shown that, for arbitrary $\beta>0$ but with upper bound for the diameter of $\Omega$,
    the bilinear form $b(\cdot,\Theta_\beta\,\cdot)+c_\BEW(\cdot,\cdot)$ from \eqref{eq_W}
    (with slightly different stabilization term) satisfies both inf-sup conditions at the continuous level.
    However, a proof of the discrete inf-sup condition is unknown. We therefore stick to
    an analysis based on ellipticity.

(ii) The corresponding versions of Remarks~\ref{rem2} (i), (ii) apply in this case as well.
\end{remark}

\subsubsection{Coupling with Galerkin boundary elements (single layer operator)} \label{sec_V}
We proceed again in the fashion of Subsection~\ref{sec_LS}. This time we couple equation~\eqref{BuL}
with a variational form of \eqref{BIEa} including the transmission conditions,
that is,
\begin{align*} 
  \dual{\gamma_\nn\ww}{\V(\gamma\uu)}_{\Gamma}
  &=
  \dual{\gamma_\nn\ww}{\V(u_0,\phi_0)}_{\Gamma}
  \quad\forall\ww\in U.
\end{align*}
We add the same rank-one term as before to ensure uniqueness, and define the bilinear and linear forms
\begin{align*}
  c(\uu,\ww) &:= c_\BEV(\uu,\ww) := \dual{\gamma_\nn\ww}{\V(\gamma\uu)}_\Gamma
  + \dual{1}{\V(\gamma\uu)}_\Gamma\dual{1}{\V(\gamma\ww)}_\Gamma,\\
  L_U(\ww) &:= L_\BEV(\ww) := \dual{\gamma_\nn\ww}{\V(u_0,\phi_0)}_\Gamma
  + \dual{1}{\V(u_0,\phi_0)}_\Gamma\dual{1}{\V(\gamma\ww)}_\Gamma.
\end{align*}
Then, instead of the Galerkin boundary element coupling with hypersingular integral operator \eqref{eq_W}
we obtain the following coupling variant of \eqref{eq_dpgcoupling}:
\begin{align}\label{eq_V} 
  \uu\in U:\quad
  b(\uu,\Theta_\beta\ww) + c_\BEV(\uu,\ww)
  = L_V(\Theta_\beta\ww) + L_\BEV(\ww) \quad\forall \ww\in U.
\end{align}
As before, the bilinear form $b$ and functional $L_V$ are the ones from Section~\ref{sec_fem}.
The discrete scheme is
\begin{align}\label{eq_V_discrete} 
  \uu_\hp\in U_\hp:\quad
  b(\uu_\hp,\Theta_\beta\ww) + c_\BEV(\uu_\hp,\ww)
  = L_V(\Theta_\beta\ww) + L_\BEV(\ww) \quad\forall \ww\in U_\hp.
\end{align}
For this coupling variant we require that the single layer operator $\slo$ is
$H^{-1/2}(\Gamma)$-elliptic. This is always true in three dimensions and, in two dimensions,
holds if the diameter of the domain is sufficiently small.
Main advantage of this coupling variant is that its analysis extends to operators of the
form $-\div A(x)\nabla u$ in \eqref{tpa} whose minimum eigenvalues are strictly bounded from below
by $1/4$, cf.~\cite{Sayas_09_VJN,Steinbach_11_NSO}. (To be precise, the analysis given in
Section~\ref{sec_V_proof} can be applied to such an operator. An improvement of the
technical estimate \eqref{viertel} there, leads to the bound for the minimum eigenvalue.)

\begin{theorem}\label{thm_V}
  In two dimensions ($d=2$) assume that $\diam(\Omega)<1$. Then, the
  statement of Theorem~\ref{thm_W} holds correspondingly for the coupling~\eqref{eq_V}
  and its discrete variant~\eqref{eq_V_discrete}.
\end{theorem}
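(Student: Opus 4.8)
The plan is to follow the template established for Theorem~\ref{thm_W}, since the statement of Theorem~\ref{thm_V} asserts precisely the analogue for the coupling \eqref{eq_V}. By Lemma~\ref{lem_dpgcoupling}, everything reduces to two facts: boundedness of the bilinear forms (immediate from boundedness of $b$ in Lemma~\ref{lem_dg11}, of the restriction operators in Lemma~\ref{la_restriction}, and of the boundary integral operators \eqref{op_bound}), and $U$-ellipticity of $\ww\mapsto b(\ww,\Theta_\beta\ww)+c_\BEV(\ww,\ww)$ for $\beta$ large. For the equivalence with \eqref{tp}, the forward direction is a direct verification: if $(u,u^c)$ solves \eqref{tp} and $\uu$ is built as stated, then $B\uu=L_V$ holds so the $b$-part vanishes in the residual sense, while the Calder\'on identity \eqref{BIEa} together with the jump relations \eqref{tpc},\eqref{tpd} gives $\V(\gamma\uu)=\V(u_0,\phi_0)$, so $c_\BEV(\uu,\ww)=L_\BEV(\ww)$ for all $\ww$. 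The reverse direction follows from uniqueness once ellipticity is in hand: the solution of \eqref{eq_V} is unique, it coincides with the one coming from \eqref{tp}, and the representation formula $u^c:=\dlp(\wat u|_\Gamma-u_0)-\slp(\wat\sigma|_\Gamma-\phi_0)$ then recovers the exterior solution exactly as in Theorem~\ref{thm_W}.

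The heart of the matter is the ellipticity estimate. First I would use Lemma~\ref{lem_dg11}: on the complement $U_0$ of the kernel of $B$, the $b$-part controls $\norm{\cdot}{U_0}^2$; more precisely, writing an arbitrary $\ww\in U$ as $\ww=\ww_0+\ww_k$ with $\ww_k\in\ker B$ (this decomposition and the relevant Lemmas \ref{la_kerB}, \ref{la_op} are referenced in the paper), one has $b(\ww,\Theta_\beta\ww)=\beta\norm{B\ww}{V'}^2\gtrsim\beta\norm{\ww_0}{U}^2$. It remains to control the kernel component $\ww_k$ using $c_\BEV$, modulo lower-order cross terms absorbed by Young's inequality at the cost of enlarging $\beta$. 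On the kernel, $\ww_k$ corresponds to a harmonic function in $\Omega$ with Cauchy data $\gamma\ww_k=(w|_\Gamma,\partial_{\nn_\Omega}w)$, and $c_\BEV(\ww_k,\ww_k)=\dual{\gamma_\nn\ww_k}{\V(\gamma\ww_k)}_\Gamma+\dual{1}{\V(\gamma\ww_k)}_\Gamma^2$. Here the Sayas--Steinbach argument from \cite{Sayas_09_VJN,Steinbach_11_NSO} enters: combining the interior Green identity for the harmonic $w$ with $H^{-1/2}(\Gamma)$-ellipticity of $\slo$ (valid for $d=3$, and for $d=2$ under $\diam(\Omega)<1$), one shows $\dual{\gamma_\nn\ww_k}{\V(\gamma\ww_k)}_\Gamma$ plus the rank-one term bounds a full norm of the Cauchy data from below — this is where the technical estimate \eqref{viertel} mentioned in the text is needed, and where the factor $1/4$ for the generalized operator $-\div A\nabla$ originates. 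Since the map $\ww_k\mapsto\gamma\ww_k$ is an isomorphism onto the relevant trace space (kernel functions are determined by their Cauchy data, the interior problem having identical operator), this yields $\norm{\ww_k}{U}^2\lesssim c_\BEV(\ww_k,\ww_k)$, plus harmless cross terms.

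Putting the pieces together: $b(\ww,\Theta_\beta\ww)+c_\BEV(\ww,\ww)\gtrsim\beta\norm{\ww_0}{U}^2+\norm{\ww_k}{U}^2-(\text{cross terms in }\norm{\ww_0}{U}\norm{\ww_k}{U})$, and choosing $\beta\ge\beta_0$ large enough to dominate the cross terms via $2ab\le\varepsilon a^2+\varepsilon^{-1}b^2$ gives $\gtrsim\norm{\ww}{U}^2$, with $\beta_0$ depending only on $\Omega$ (through the various operator norms and the $\slo$-ellipticity constant). The best-approximation bound with constant $\propto\beta$ then follows verbatim from Lemma~\ref{lem_dpgcoupling}, since $C_b$, $C_c$ are $\beta$-independent and $\alpha\gtrsim 1$ uniformly for $\beta\ge\beta_0$.

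The main obstacle I anticipate is the lower bound for $\dual{\gamma_\nn\ww_k}{\V(\gamma\ww_k)}_\Gamma$ on kernel functions. Unlike the hypersingular case in Theorem~\ref{thm_W}, the single-layer pairing here is not coercive by itself — it pairs the Neumann trace against $\V$ applied to the full Cauchy data, and one must exploit the precise structure of $\V=\slo\,\gamma_\nn+(1/2-\dlo)\gamma_0$ together with the jump relations of the Calder\'on projector and the Green identity $\dual{\gamma_\nn w}{\gamma_0 w}_\Gamma=\norm{\nabla w}{L_2(\Omega)}^2$ for harmonic $w$. Making this manipulation yield a genuine coercivity constant, rather than merely non-negativity, is exactly the content of \eqref{viertel}, and getting the quantitative dependence right (so that $\slo$-ellipticity suffices and the $1/4$ threshold emerges in the variable-coefficient extension) is the delicate step; the remaining bookkeeping with cross terms and the choice of $\beta_0$ is routine.
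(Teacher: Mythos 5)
Your proposal is correct and follows essentially the same route as the paper: reduce everything to Lemma~\ref{lem_dpgcoupling}, split off the kernel of $B$ via the harmonic extension (Lemmas~\ref{la_ext},~\ref{la_kerB}), prove ellipticity of $c_\BEV$ on $\ker B$ by combining the Green identity $\dual{\gamma_\nn\ww_k}{\gamma_0\ww_k}_\Gamma=\norm{\nabla\widetilde u}{L_2(\Omega)}^2$ with the Sayas--Steinbach estimate \eqref{viertel} and the $\slo$-based norm equivalence, and absorb the cross terms by Young's inequality at the price of a sufficiently large $\beta$. You correctly isolate \eqref{viertel} as the delicate ingredient; the paper likewise does not reprove it but cites \cite{AuradaFFKMP_13_CFB} for that estimate and the accompanying norm equivalence.
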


\subsubsection{Coupling with Galerkin boundary elements (Calder\'on system)} \label{sec_C}

We now use both Calder\'on equations \eqref{BIE} to couple the exterior part of the transmission problem
to the interior ultra-weak formulation (this is~\eqref{BuL} in strong form).
The equations are
\begin{align*} 
  \dual{\W(\gamma\uu)}{\gamma_0\ww}_{\Gamma}
  &=
  \dual{\W(u_0,\phi_0)}{\gamma_0\ww}_{\Gamma}
  \quad\forall\ww\in U, \\
  \dual{\gamma_\nn\ww}{\V(\gamma\uu)}_{\Gamma}
  &=
  \dual{\gamma_\nn\ww}{\V(u_0,\phi_0)}_{\Gamma}
  \quad\forall\ww\in U.
\end{align*}
We add the same rank-one term as before to ensure uniqueness, and define the bilinear and linear forms
\begin{align*}
  c(\uu,\ww) &:= c_\BEC(\uu,\ww) := \dual{\W(\gamma\uu)}{\gamma_0\ww}_{\Gamma} 
  + \dual{\gamma_\nn\ww}{\V(\gamma\uu)}_\Gamma
  + \dual{1}{\V(\gamma\uu)}_\Gamma\dual{1}{\V(\gamma\ww)}_\Gamma,\\
  L_U(\ww) &:= L_\BEC(\ww) := \dual{\W(u_0,\phi_0)}{\gamma_0\ww}_\Gamma 
  + \dual{\gamma_\nn\ww}{\V(u_0,\phi_0)}_\Gamma
  + \dual{1}{\V(u_0,\phi_0)}_\Gamma\dual{1}{\V(\gamma\ww)}_\Gamma.
\end{align*}
Then, we obtain the following coupling variant of \eqref{eq_dpgcoupling}:
\begin{align}\label{eq_C} 
  \uu\in U:\quad
  b(\uu,\Theta_\beta\ww) + c_\BEC(\uu,\ww)
  = L_V(\Theta_\beta\ww) + L_\BEC(\ww) \quad\forall \ww\in U,
\end{align}
and discrete scheme
\begin{align}\label{eq_C_discrete} 
  \uu_\hp\in U_\hp:\quad
  b(\uu_\hp,\Theta_\beta\ww) + c_\BEC(\uu_\hp,\ww)
  = L_V(\Theta_\beta\ww) + L_\BEC(\ww) \quad\forall \ww\in U_\hp.
\end{align}
Again, the bilinear form $b$ and functional $L_V$ are the ones from Section~\ref{sec_fem}.
For this coupling variant, as for the case with weakly singular operator,
we need $H^{-1/2}(\Gamma)$-ellipticity of the single layer operator $\slo$. As previously noted,
in three dimensions this is always true whereas in two dimensions the domain has to be appropriately
scaled. Main advantage of this coupling variant is that its analysis does not make
use of special relations between Cauchy data of the interior and exterior problems
(specifically, Lemma~\ref{la_op} from Section~\ref{sec_LS_tech} below is not needed).
Its proof is therefore easier to extend to problems where the differential operator
in the interior is different from the one in the exterior. Also, we do not require a condition
on the minimum eigenvalue of operators of the type $-\div A(x)\nabla u$ for the interior problem
\eqref{tpa}, which is needed when extending the variant from Section~\ref{sec_V} to such operators.

\begin{theorem}\label{thm_C}
  In two dimensions ($d=2$) assume that $\diam(\Omega)<1$. Then, the statement of
  Theorem~\ref{thm_W} holds correspondingly for the variational formulation~\eqref{eq_C}
  and its discrete variant~\eqref{eq_C_discrete}.
\end{theorem}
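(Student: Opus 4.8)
The plan is to mimic the proof strategy of Theorem~\ref{thm_W}, reducing the statement for the Calder\'on coupling~\eqref{eq_C} to the abstract Lemma~\ref{lem_dpgcoupling} by establishing $U$-ellipticity of the bilinear form $b(\cdot,\Theta_\beta\,\cdot)+c_\BEC(\cdot,\cdot)$ for $\beta\ge\beta_0$, together with the equivalence of solutions of~\eqref{tp} and~\eqref{eq_C}. Boundedness of $b$ is Lemma~\ref{lem_dg11}, and boundedness of $c_\BEC$ follows from~\eqref{op_bound}, the boundedness of the restriction operators $\gamma$, $\gamma_0$, $\gamma_\nn$ (Lemma~\ref{la_restriction}), and the continuity of $\dual{\cdot}{\cdot}_\Gamma$; once ellipticity is in hand, Lemma~\ref{lem_dpgcoupling} immediately delivers unique solvability and the best-approximation estimate with the stated $\beta$-dependence. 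For the equivalence, I would argue exactly as in Theorem~\ref{thm_W}: given the PDE solution $(u,u^c)$, the constructed $\uu$ satisfies $B\uu=L_V$ and its Cauchy data $\gamma\uu=(u^c|_\Gamma+u_0,\partial_{\nn_\Omega}u^c+\phi_0)$ plug into $\V$, $\W$ via linearity so that $\V(\gamma\uu)-\V(u_0,\phi_0)=\V(u^c|_\Gamma,\partial_{\nn_\Omega}u^c)=0$ and likewise for $\W$ by~\eqref{BIE}, hence~\eqref{eq_C} holds; conversely, once ellipticity gives uniqueness of~\eqref{eq_C}, one checks that the $(u,u^c)$ built from a solution $\uu$ via the potential representation $u^c:=\dlp(\wat u|_\Gamma-u_0)-\slp(\wat\sigma|_\Gamma-\phi_0)$ solves~\eqref{tp}, using the jump relations of the layer potentials and testing~\eqref{eq_C} suitably.

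The core work is the ellipticity estimate. Testing with $\ww=\uu$ gives
\begin{align*}
  b(\uu,\Theta_\beta\uu) + c_\BEC(\uu,\uu)
  = \beta\,\norm{B\uu}{V'}^2
  + \dual{\W(\gamma\uu)}{\gamma_0\uu}_\Gamma
  + \dual{\gamma_\nn\uu}{\V(\gamma\uu)}_\Gamma
  + \dual{1}{\V(\gamma\uu)}_\Gamma^2,
\end{align*}
using $\ip{\Theta_\beta\uu}{\Theta_\beta\uu}_V=\beta\,b(\uu,\Theta_\beta\uu)$ and $\norm{\Theta_\beta\uu}{V}=\beta\norm{B\uu}{V'}$. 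Writing $(u^*,\phi^*):=\gamma\uu=(\wat u|_\Gamma,\wat\sigma|_\Gamma)$, the middle three terms form the bilinear form associated with the Calder\'on-type operator $\begin{pmatrix}\W\\ \V\end{pmatrix}$ acting on $(u^*,\phi^*)$, augmented by the rank-one term; the point of using \emph{both} equations (and not relying on Lemma~\ref{la_op}) is that, under $H^{-1/2}(\Gamma)$-ellipticity of $\slo$, one obtains a G\r{a}rding-type inequality
\begin{align*}
  \dual{\W(u^*,\phi^*)}{u^*}_\Gamma + \dual{\phi^*}{\V(u^*,\phi^*)}_\Gamma + \dual{1}{\V(u^*,\phi^*)}_\Gamma^2
  \;\gtrsim\; \norm{u^*}{H^{1/2}(\Gamma)}^2 + \norm{\phi^*}{H^{-1/2}(\Gamma)}^2
  - K\,\norm{u^*}{L_2(\Gamma)}^2
\end{align*}
for some compactness-type remainder, or—better, following Sayas--Steinbach as in the discussion preceding Theorem~\ref{thm_V}—a genuine coercivity $\gtrsim\norm{u^*}{H^{1/2}(\Gamma)}^2+\norm{\phi^*}{H^{-1/2}(\Gamma)}^2$ directly, exploiting the identities $\dual{\W v}{v}_\Gamma\ge0$, $\dual{\phi}{\slo\phi}_\Gamma\gtrsim\norm{\phi}{H^{-1/2}(\Gamma)}^2$, and the fact that the off-diagonal double-layer contributions $\dual{(1/2-\dlo)u^*}{\phi^*}_\Gamma$ and $\dual{\phi^*}{(1/2-\dlo)u^*}_\Gamma$ cancel up to a sign because $\adlo$ is the adjoint of $\dlo$; the rank-one term $\dual{1}{\V(u^*,\phi^*)}_\Gamma^2$ controls the one-dimensional kernel direction of $\W$ (spanned by constants) exactly as in the analysis of~\eqref{eq_W}. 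Then, since $\norm{B\uu}{V'}^2\gtrsim\norm{u}{L_2(\Omega)}^2+\norm{\ssigma}{\LL_2(\Omega)}^2+\norm{\wat u}{H^{1/2}(\cS)}^2+\norm{\wat\sigma}{H^{-1/2}(\cS)}^2$ whenever $\wat u|_\Gamma=0$ (this is the $U_0$-inf-sup of Lemma~\ref{lem_dg11}), a splitting argument—decompose $\wat u$ into a part vanishing on $\Gamma$ plus a lifting of $u^*$, absorb cross-terms by Young's inequality at the cost of enlarging $\beta$—yields $b(\uu,\Theta_\beta\uu)+c_\BEC(\uu,\uu)\ge\alpha\norm{\uu}{U}^2$ once $\beta\ge\beta_0$, with $\beta_0$ depending only on $\Omega$ through the constants in Lemma~\ref{lem_dg11}, the trace liftings, and the BIO ellipticity constants.

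The main obstacle is this coercivity-through-$\beta$ argument: the DPG residual $\norm{B\uu}{V'}$ only controls the interior unknowns and the \emph{skeleton} traces $\wat u,\wat\sigma$ in their broken norms $H^{\pm1/2}(\cS)$ when the boundary trace $\wat u|_\Gamma$ is zero, whereas the boundary terms $c_\BEC$ only control $\gamma\uu=(\wat u|_\Gamma,\wat\sigma|_\Gamma)$ in the $\Gamma$-norms; gluing these into a bound on the full $U$-norm requires a careful lifting/extension estimate relating $\norm{\wat u}{H^{1/2}(\cS)}$ to $\norm{\wat u|_\Gamma}{H^{1/2}(\Gamma)}$ plus a zero-trace remainder, and the Young-inequality absorption of the resulting cross-terms is precisely what forces $\beta$ to be large and fixes the constant $\beta_0(\Omega)$. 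In two dimensions the scaling hypothesis $\diam(\Omega)<1$ enters exactly where $H^{-1/2}(\Gamma)$-ellipticity of $\slo$ is invoked. Apart from this absorption step, everything else is routine: boundedness, the algebraic verification of the equivalence via the Calder\'on relations~\eqref{BIE} and jump relations~\eqref{tpc}--\eqref{tpd}, and the final appeal to Lemma~\ref{lem_dpgcoupling}. Crucially—and this is the advertised advantage over Theorem~\ref{thm_V}—no use is made of Lemma~\ref{la_op}, so the structure of the argument does not depend on the interior operator equalling the exterior Laplacian, and no eigenvalue lower bound on a variable-coefficient interior operator is needed.
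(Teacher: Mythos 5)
Your overall skeleton (reduce everything to Lemma~\ref{lem_dpgcoupling}, prove boundedness of $c_\BEC$, verify the equivalence with \eqref{tp} via the Calder\'on system \eqref{BIE} and the jump relations) coincides with the paper's. The genuine gap is the central coercivity claim for the boundary part. Expanding the quadratic form and using that $\adlo$ is the adjoint of $\dlo$, i.e. $\dual{\adlo\phi^*}{u^*}_\Gamma=\dual{\phi^*}{\dlo u^*}_\Gamma$, the double-layer contributions do cancel, but the two $\tfrac12$-terms add up, so that
\begin{align*}
  \dual{\W(u^*,\phi^*)}{u^*}_\Gamma+\dual{\phi^*}{\V(u^*,\phi^*)}_\Gamma
  =\dual{\hyp u^*}{u^*}_\Gamma+\dual{\phi^*}{\slo\phi^*}_\Gamma+\dual{\phi^*}{u^*}_\Gamma.
\end{align*}
The leftover pairing $\dual{\phi^*}{u^*}_\Gamma$ has no sign, and its only generic bound is $\norm{\phi^*}{H^{-1/2}(\Gamma)}\norm{u^*}{H^{1/2}(\Gamma)}$ with constant one, whereas the coercivity constants of $\hyp$ and $\slo$ (equivalently, the constants in Lemma~\ref{la_normEquiv}) depend on $\Gamma$ and may be small. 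Hence the ``genuine coercivity'' of $c_\BEC$ restricted to the boundary data, on all of $H^{1/2}(\Gamma)\times H^{-1/2}(\Gamma)$, on which your ellipticity argument rests, does not follow; this cross term is exactly the difficulty the proof must address. Your G\r{a}rding-type fallback does not repair this either: a G\r{a}rding inequality plus injectivity would at best give well-posedness through inf-sup/Fredholm arguments, not $U$-ellipticity, and the discrete quasi-optimality asserted in the theorem is obtained via Lemma~\ref{lem_dpgcoupling}, which needs ellipticity (cf.\ Remark~\ref{rem_W}(i)).

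The paper's proof handles precisely this term. Starting from $\norm{\uu}{U}^2\lesssim\norm{B\uu}{V'}^2+\norm{\gamma_0\uu}{H^{1/2}(\Gamma)}^2$, it adds the nonnegative quantity $\norm{\gamma_\nn\uu}{H^{-1/2}(\Gamma)}^2$ and applies Lemma~\ref{la_normEquiv}, which after the above algebra gives, up to constants, $c_\BEC(\uu,\uu)-\dual{\gamma_\nn\uu}{\gamma_0\uu}_\Gamma$. The unsigned cross term is then split by means of the harmonic extension: $\dual{\gamma_\nn\EE\gamma_0\uu}{\gamma_0\uu}_\Gamma=\norm{\nabla\widetilde u}{L_2(\Omega)}^2\ge0$ can be discarded (note this still uses a mild relation between interior and exterior Cauchy data -- it is only Lemma~\ref{la_op} that is avoided, so your closing claim is slightly overstated), while $|\dual{\gamma_\nn(\uu-\EE\gamma_0\uu)}{\gamma_0\uu}_\Gamma|\lesssim\norm{\uu-\EE\gamma_0\uu}{U}\norm{\uu}{U}\lesssim\norm{B\uu}{V'}\norm{\uu}{U}$ by Lemmas~\ref{lem_dg11},~\ref{la_ext},~\ref{la_kerB}; the latter is absorbed by Young's inequality into $\beta\norm{B\uu}{V'}^2=b(\uu,\Theta_\beta\uu)$, and this absorption -- not the gluing of skeleton and boundary norms -- is what forces $\beta\ge\beta_0$. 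Also note that in your splitting of $\wat u$ the lifting must be taken as $\EE\gamma_0\uu\in\ker B$ (not an arbitrary lifting of $u^*$), since otherwise $\norm{B\uu}{V'}\gtrsim\norm{\uu_0}{U}$ is not available. With the coercivity claim replaced by this extension-splitting argument, the rest of your outline is sound and agrees with the paper.
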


\section{Analysis and proofs of the main results} \label{sec_LS_tech}

In this section we provide some technical details and prove Theorems~\ref{thm_equiv},~\ref{thm_W}
and~\ref{thm_V}.

For given $\varphi\in H^{1/2}(\Gamma)$ let us define its harmonic extension
\begin{align*}
   &\ext(\varphi) := \uu = (u,\ssigma,\wat u,\wat\sigma):
   \left\{\begin{array}{l}
   u\in H^1(\Omega):\; u=\varphi\ \text{on}\ \Gamma,\quad
   \vdual{\nabla u}{\nabla w}_\Omega = 0\quad \forall w\in H^1_0(\Omega),\\
   \ssigma = \nabla u,\quad
   \wat u = u\ \text{on}\ \cS,\quad
   \wat\sigma = \ssigma\cdot\nn_\el\ \text{on}\ \partial\el,\;\forall \el\in\TT.
   \end{array}\right.
\end{align*}
We can state the following properties of $\ext$.
\begin{lemma} \label{la_ext}
The operator $\ext:\; H^{1/2}(\Gamma)\to U$ is linear, bounded, and
a right-inverse of $\gamma_0$.
\end{lemma}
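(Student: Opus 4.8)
The plan is to verify the three asserted properties of $\ext$ directly from its definition. The key observation is that $\ext(\varphi)$ is built from the classical harmonic extension $u\in H^1(\Omega)$ of $\varphi$, whose existence, uniqueness, linearity, and boundedness (i.e.\ $\norm{u}{H^1(\Omega)}\lesssim\norm{\varphi}{H^{1/2}(\Gamma)}$) follow from the Lax--Milgram lemma applied to $\vdual{\nabla u}{\nabla w}_\Omega=0$ for all $w\in H^1_0(\Omega)$ together with a bounded lifting $E\varphi\in H^1(\Omega)$ of $\varphi$; standard elliptic theory gives this. The remaining three components $\ssigma$, $\wat u$, $\wat\sigma$ of $\uu=\ext(\varphi)$ are defined by explicit (linear, bounded) operations on $u$, so well-definedness and linearity of $\ext$ are inherited from those of $u\mapsto(u,\nabla u,u|_\cS,(\nabla u\cdot\nn_\el)|_{\partial\el})$.

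First I would fix $\varphi\in H^{1/2}(\Gamma)$, recall that the weak problem for $u$ has a unique solution, and note linearity in $\varphi$ is immediate because the bilinear form is independent of $\varphi$ and the constraint $u=\varphi$ on $\Gamma$ enters affinely. Second, for boundedness, I would estimate
\begin{align*}
  \norm{\ext(\varphi)}{U}^2
  = \norm{u}{L_2(\Omega)}^2 + \norm{\nabla u}{\LL_2(\Omega)}^2
    + \norm{\wat u}{H^{1/2}(\cS)}^2 + \norm{\wat\sigma}{H^{-1/2}(\cS)}^2,
\end{align*}
and bound each term: the first two by $\norm{u}{H^1(\Omega)}^2\lesssim\norm{\varphi}{H^{1/2}(\Gamma)}^2$; for the third, by the very definition~\eqref{Hpm} of the $H^{1/2}(\cS)$-norm, $\norm{\wat u}{H^{1/2}(\cS)}\le\norm{u}{H^1(\Omega)}$ since $u$ is an admissible competitor; for the fourth, since $\ssigma=\nabla u$ is harmonic we have $\div\ssigma = \Delta u = 0\in L_2(\Omega)$, so $\ssigma\in\HH(\div,\Omega)$ with $\norm{\ssigma}{\HH(\div,\Omega)}=\norm{\nabla u}{\LL_2(\Omega)}\le\norm{u}{H^1(\Omega)}$, and again by the definition of the $H^{-1/2}(\cS)$-norm $\norm{\wat\sigma}{H^{-1/2}(\cS)}\le\norm{\ssigma}{\HH(\div,\Omega)}$. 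Combining, $\norm{\ext(\varphi)}{U}\lesssim\norm{\varphi}{H^{1/2}(\Gamma)}$.

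Third, for the right-inverse property I would simply compute: by definition $\gamma_0\ext(\varphi) = \wat u|_\Gamma$, and $\wat u = u$ on $\cS$ with $u=\varphi$ on $\Gamma\subset\cS$, hence $\gamma_0\ext(\varphi)=\varphi$, i.e.\ $\gamma_0\circ\ext=\mathrm{id}$ on $H^{1/2}(\Gamma)$.

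There is no real obstacle here — the statement is a bookkeeping lemma — but the one point deserving a sentence of care is the claim $\wat\sigma\in H^{-1/2}(\cS)$: this requires exhibiting a global $\HH(\div,\Omega)$ field whose element-wise normal traces reproduce $\wat\sigma$, and harmonicity of $u$ is exactly what makes $\ssigma=\nabla u$ serve this purpose (without $\Delta u=0$ one would only know $\nabla u\in\LL_2(\Omega)$, not $\HH(\div,\Omega)$). Likewise, one should remark that the regularity $u\in H^1(\Omega)$ suffices to make all the element traces $u|_{\partial\el}\in H^{1/2}(\partial\el)$ and $(\nabla u\cdot\nn_\el)|_{\partial\el}\in H^{-1/2}(\partial\el)$ well defined, which is built into the definitions of $H^{1/2}(\cS)$ and $H^{-1/2}(\cS)$. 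I expect this to be the most subtle bit, and everything else to be a two-line verification.
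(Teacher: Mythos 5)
Your proposal is correct and follows essentially the same route as the paper's (much terser) proof: well-posedness of the Dirichlet problem gives $\norm{u}{H^1(\Omega)}\lesssim\norm{\varphi}{H^{1/2}(\Gamma)}$, and the bounds on the skeleton components come straight from the infimum definitions~\eqref{Hpm}, with harmonicity providing $\nabla u\in\HH(\div,\Omega)$ as the admissible competitor for the $H^{-1/2}(\cS)$-norm, while the right-inverse property holds by construction. Your explicit remark about why $\Delta u=0$ is needed for $\wat\sigma\in H^{-1/2}(\cS)$ is exactly the detail the paper leaves implicit.
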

\begin{proof}
By construction we have $\gamma_0\ext\varphi=\varphi$ for any $\varphi\in H^{1/2}(\Gamma)$,
showing that $\ext$ is a right-inverse.
The continuity of $\ext$ follows from the well-posedness of the Dirichlet problem, i.e.,
$\|u\|_{H^1(\Omega)}\lesssim \|\varphi\|_{H^{1/2}(\Gamma)}$, the definition
of the other components of $\ext\varphi$ and the involved norms, cf.~\eqref{Hpm}
and Lemma~\ref{la_restriction}.
\end{proof}

\begin{lemma} \label{la_kerB}
There holds $\ker B = \ext H^{1/2}(\Gamma)$.
\end{lemma}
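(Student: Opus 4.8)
The plan is to prove the two inclusions $\ext H^{1/2}(\Gamma)\subset\ker B$ and $\ker B\subset\ext H^{1/2}(\Gamma)$ separately. For the first, I would take $\varphi\in H^{1/2}(\Gamma)$ and $\uu=\ext\varphi=(u,\ssigma,\wat u,\wat\sigma)$ and compute $b(\uu,\vv)$ for arbitrary $\vv=(v,\ttau)\in V$ directly from the definition \eqref{ultra_bil_L}. Since $\ssigma=\nabla u$, $\wat u=u|_\cS$, and $\wat\sigma=\ssigma\cdot\nn_\el$ elementwise, the four terms of $b$ collapse via elementwise integration by parts on each $\el\in\TT$: the term $\vdual{u}{\pwdiv\ttau}_\Omega$ together with $-\dual{\wat u}{\ttau\cdot\nn}_\cS$ becomes $-\vdual{\pwnabla u}{\ttau}_\Omega$, which cancels against $\vdual{\ssigma}{\ttau}_\Omega$ since $\ssigma=\nabla u$; and $\vdual{\ssigma}{\pwnabla v}_\Omega$ together with $-\dual{\wat\sigma}{v}_\cS$ becomes $-\vdual{\pwdiv\ssigma}{v}_\Omega=\vdual{\Delta u}{v}_\Omega=0$ because $u$ is harmonic in $\Omega$ (it solves the homogeneous interior Dirichlet problem). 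Hence $b(\uu,\vv)=0$ for all $\vv\in V$, i.e.\ $B\uu=0$.

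For the reverse inclusion, I would take $\uu=(u,\ssigma,\wat u,\wat\sigma)\in\ker B$ and show $\uu=\ext(\wat u|_\Gamma)$. Set $\varphi:=\gamma_0\uu=\wat u|_\Gamma\in H^{1/2}(\Gamma)$ and let $\uu^\star:=\ext\varphi$. By the first inclusion $\uu^\star\in\ker B$, so $\uu-\uu^\star\in\ker B$. Now I claim $\uu-\uu^\star\in U_0$: its third component is $\wat u-\wat u^\star$, which restricted to $\Gamma$ equals $\varphi-\varphi=0$, so it lies in $H^{1/2}_{00}(\cS)$, and the other three components are unconstrained, hence $\uu-\uu^\star\in U_0=L_2(\Omega)\times\LL_2(\Omega)\times H^{1/2}_{00}(\cS)\times H^{-1/2}(\cS)$. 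By Lemma~\ref{lem_dg11}, $B$ restricted to $U_0$ satisfies the inf-sup condition $\sup_{\vv\in V} b(\uu_0,\vv)/\norm{\vv}{V}\gtrsim\norm{\uu_0}{U}$ for all $\uu_0\in U_0$; applied to $\uu_0=\uu-\uu^\star$ this forces $\norm{\uu-\uu^\star}{U}=0$, so $\uu=\uu^\star=\ext\varphi\in\ext H^{1/2}(\Gamma)$.

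The main obstacle is the careful elementwise integration-by-parts bookkeeping in the first inclusion: one must verify that $\ttau\in\HH(\div,\TT)$ is only piecewise regular (so $\ttau\cdot\nn_\el$ is taken as the one-sided normal trace on each $\partial\el$ and the skeleton pairing $\dual{\cdot}{\cdot}_\cS$ sums one-sided contributions), and likewise that $\ssigma=\nabla u$ with $u\in H^1(\Omega)$ is globally in $\HH(\div,\Omega)$ because $\Delta u=0$, so its normal trace is single-valued and matches $\wat\sigma$. Once these regularity points are pinned down, the cancellations are exactly as in the standard derivation of the ultra-weak formulation and go through verbatim. Everything else—linearity, the definition of $U_0$, and the inf-sup bound—is immediate from the cited lemmas.
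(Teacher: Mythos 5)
Your proof is correct and follows essentially the same route as the paper: the inclusion $\ext H^{1/2}(\Gamma)\subset\ker B$ via elementwise integration by parts (which the paper asserts in one line and you spell out, correctly handling the regularity of $\ssigma=\nabla u\in\HH(\div,\Omega)$), and the reverse inclusion by noting $\uu-\ext\gamma_0\uu\in U_0\cap\ker B$ and invoking the inf-sup estimate of Lemma~\ref{lem_dg11}, exactly as in the paper's proof.
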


\begin{proof}
We note that $\ext H^{1/2}(\Gamma)\subset \ker B$ follows from integration by parts.
Therefore, it remains to show that $\ker B\subset \ext H^{1/2}(\Gamma)$.
Let $\uu\in U$ be arbitrary but fixed and define $\uu_0:=\uu-\ext\gamma_0\uu$.
Then $\gamma_0 \uu_0=0$ by Lemma~\ref{la_ext},
i.e., $\uu_0\in U_0$, and due to Lemma~\ref{lem_dg11} it holds
\begin{align}\label{pf_kerB_1}
  \begin{split}
  \norm{B\uu}{V'} &= \norm{B\uu_0}{V'}
   =
   \sup_{\vv\in V} \frac {b(\uu_0,\vv)}{\norm{\vv}{V}}
   \gtrsim
   \norm{\uu_0}{U}.
 \end{split}
\end{align}
Suppose that $\uu\in\ker B$. From~\eqref{pf_kerB_1} it follows that
$\uu = \ext\gamma_0\uu\in \ext H^{1/2}(\Gamma)$.
\end{proof}

\begin{lemma} \label{la_op}
  There holds $\V(\gamma\uu) = \gamma_0\uu$ and $\W(\gamma\uu) = \gamma_\nn\uu$
  for all $\uu\in\ker B$.
\end{lemma}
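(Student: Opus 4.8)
The plan is to exploit the structure of $\ker B$ established in Lemma~\ref{la_kerB}, namely $\ker B = \ext H^{1/2}(\Gamma)$. Fix $\uu\in\ker B$ and write $\uu = \ext\varphi$ with $\varphi := \gamma_0\uu\in H^{1/2}(\Gamma)$. Then the first component $u$ of $\uu$ is the harmonic function in $\Omega$ with $u|_\Gamma = \varphi$, and $\wat\sigma|_\Gamma = \gamma_\nn\uu = \partial_{\nn_\Omega} u$ is its interior conormal derivative. So the pair $(\gamma_0\uu,\gamma_\nn\uu) = (u|_\Gamma,\partial_{\nn_\Omega} u)$ is precisely the (interior) Cauchy data of a function harmonic in $\Omega$. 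The whole statement then reduces to the classical Calder\'on identities for the \emph{interior} problem.

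Concretely, I would recall the interior Calder\'on system: for $w$ harmonic in $\Omega$ with Cauchy data $(w|_\Gamma,\partial_{\nn_\Omega}w)$ one has
\begin{align*}
  (1/2+\dlo)(w|_\Gamma) - \slo(\partial_{\nn_\Omega}w) &= w|_\Gamma,\\
  -\hyp(w|_\Gamma) + (1/2-\adlo)(\partial_{\nn_\Omega}w) &= \partial_{\nn_\Omega}w,
\end{align*}
which are the standard representation-formula identities found in the references \cite{Costabel_88_BIO,McLean_00_SES,HsiaoW_08_BIE} (these are just the exterior relations \eqref{BIE}, written for the interior domain, hence with the opposite sign convention on the jump terms). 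Rearranging the first identity gives $\slo(\partial_{\nn_\Omega}w) + (1/2-\dlo)(w|_\Gamma) = 0$, i.e. $\V(w|_\Gamma,\partial_{\nn_\Omega}w) = w|_\Gamma$ is false as stated — so the correct bookkeeping is: from the first identity, $\slo(\partial_{\nn_\Omega}w) = (1/2-\dlo)(w|_\Gamma) - $ wait. Let me instead simply compute $\V(\gamma\uu) = \slo(\gamma_\nn\uu) + (1/2-\dlo)(\gamma_0\uu)$; using the first interior identity $(1/2+\dlo)(w|_\Gamma) - \slo(\partial_{\nn_\Omega}w) = w|_\Gamma$ with $w = u$, one obtains $\slo(\gamma_\nn\uu) = (1/2+\dlo)(\gamma_0\uu) - \gamma_0\uu = (\dlo - 1/2)(\gamma_0\uu)$, whence $\V(\gamma\uu) = (\dlo-1/2)(\gamma_0\uu) + (1/2-\dlo)(\gamma_0\uu) = 0$; this contradicts the asserted $\V(\gamma\uu)=\gamma_0\uu$. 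The resolution is that the paper's sign convention must make the interior identities read $\V(w|_\Gamma,\partial_{\nn_\Omega}w) = w|_\Gamma$ and $\W(w|_\Gamma,\partial_{\nn_\Omega}w) = \partial_{\nn_\Omega}w$ directly; so the clean way to proceed is to invoke the interior representation formula $u(x) = -\dlp(u|_\Gamma)(x) + \slp(\partial_{\nn_\Omega}u)(x)$ for $x\in\Omega$, apply $\gamma_\Omega$ and $\partial_{\nn_\Omega}$ to it using the jump relations for the layer potentials, and read off the two identities. I would carry this out carefully with the paper's orientation of $\nn_\Omega$.

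The main steps in order are: (1) fix $\uu\in\ker B$, invoke Lemma~\ref{la_kerB} to get $\uu = \ext\varphi$ and identify the first component $u$ as $\Delta u = 0$ in $\Omega$, $u|_\Gamma = \gamma_0\uu$, $\partial_{\nn_\Omega}u = \gamma_\nn\uu$ (the latter because $\wat\sigma = \ssigma\cdot\nn_\el$ and $\ssigma = \nabla u$, so the skeleton flux restricted to $\Gamma$ is the conormal derivative of the global harmonic $u$); (2) write the Green representation formula for $u$ in $\Omega$; (3) take the Dirichlet trace and the conormal derivative from inside $\Omega$, using the standard jump relations $\gamma_\Omega^{\intr}\dlp = (-1/2 + K)$ and $\partial_{\nn_\Omega}^{\intr}\slp = (1/2 + K')$ etc.; (4) collect terms to obtain exactly $\V(\gamma\uu) = \gamma_0\uu$ and $\W(\gamma\uu) = \gamma_\nn\uu$ in the definitions \eqref{slo_dlo}.

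The main obstacle is purely notational: getting every sign right under this paper's specific conventions (the direction of $\nn_\Omega$ points into $\Omc$, the $\pm 1/2$ placement in the definitions $\dlo = 1/2 + \trace_\Omega\dlp$, $\hyp = -\partial_{\nn_\Omega}\dlp$, and the definitions $\V(u,\phi) = \slo\phi + (1/2-\dlo)u$, $\W(u,\phi) = \hyp u + (1/2+\adlo)\phi$). Once the interior Calder\'on identities are written in the form $\V(\text{interior Cauchy data of a function harmonic in }\Omega) = $ its Dirichlet trace and $\W(\dots) = $ its Neumann trace — which is the natural ``mirror image'' of \eqref{BIE} for the bounded domain — the lemma is immediate. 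No estimates or genuinely new arguments are required; the content is that $\ker B$ consists exactly of interior Cauchy data, and $\V,\W$ act as the interior Calder\'on projector's relevant rows.
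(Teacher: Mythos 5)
Your overall strategy is exactly the paper's proof: by Lemma~\ref{la_kerB}, any $\uu\in\ker B$ equals $\ext\gamma_0\uu$, so its first component $u$ is harmonic in $\Omega$ with $u|_\Gamma=\gamma_0\uu$ and $\partial_{\nn_\Omega}u=\gamma_\nn\uu$ (your step (1)), and the claimed identities follow from Green's representation formula together with the jump relations of the layer potentials (your steps (2)--(4)); this is precisely the computation the paper performs. The flaw is the displayed ``interior Calder\'on system'' in the middle of your proposal, which is what produced your spurious contradiction: rearranged in the paper's notation, the two identities you wrote read $\V(w|_\Gamma,\partial_{\nn_\Omega}w)=0$ and $\W(w|_\Gamma,\partial_{\nn_\Omega}w)=0$, i.e.\ you copied the \emph{exterior} relations~\eqref{BIE} and attached interior Cauchy data to them. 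Since you then leave the decisive sign bookkeeping undone (``carry this out carefully''), here is the two-line computation that closes it. With the paper's conventions one has $\trace_\Omega\dlp=\dlo-\tfrac12$ (this is just the definition~\eqref{slo_dlo} rearranged), $\partial_{\nn_\Omega}\slp=\tfrac12+\adlo$, and $\partial_{\nn_\Omega}\dlp=-\hyp$ by definition. Applying $\trace_\Omega$ and $\partial_{\nn_\Omega}$ to the representation formula $u=\slp(\wat\sigma|_\Gamma)-\dlp(\wat u|_\Gamma)$ in $\Omega$ yields
\begin{align*}
  \wat u|_\Gamma = \slo\wat\sigma|_\Gamma - (\dlo-\tfrac12)\wat u|_\Gamma,
  \qquad
  \wat\sigma|_\Gamma = (\tfrac12+\adlo)\wat\sigma|_\Gamma + \hyp\wat u|_\Gamma,
\end{align*}
which are exactly $\V(\gamma\uu)=\gamma_0\uu$ and $\W(\gamma\uu)=\gamma_\nn\uu$. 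With this inserted, your plan coincides with the paper's argument and is complete; no new idea is missing, only the correct interior identities in place of the miswritten ones.
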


\begin{proof}
Let $\uu=(u,\ssigma,\wat u,\wat\sigma)\in\ker B$ be given. By Lemma~\ref{la_kerB} and the
definition of $\ext$ we have
\[
   \Delta u = 0\ \text{in}\ \Omega,\quad
   u=\wat u|_\Gamma\ \text{on}\ \Gamma,\quad\text{and}\quad
   \frac {\partial u}{\partial\nn_\Omega} = \wat\sigma|_\Gamma\ \text{on}\ \Gamma.
\]
Therefore, the representation formula (see \cite{McLean_00_SES}) yields
$u=\slp\wat\sigma-\dlp\wat u$ in $\Omega$, and application of the trace operator proves that
\(
   \wat u/2 = \slo\wat\sigma - \dlo\wat u
\)
on $\Gamma$, cf.~\eqref{slo_dlo}, that is,
\[
   \V(\gamma\uu) = \slo\wat\sigma + (1/2-\dlo)\wat u|_\Gamma = \wat u|_\Gamma = \gamma_0\uu.
\]
The relation involving $\W$ follows by a similar argument.
\end{proof}

The following norm equivalence follows by a standard compactness argument.
Similar results for classical coupling methods can be found in~\cite{AuradaFFKMP_13_CFB,dissTOFU}.

\begin{lemma}\label{la_normEquiv}
  In the case $d=2$ assume that $\diam(\Omega)<1$. There holds 
  \begin{align*}
    \norm{u}{H^{1/2}(\Gamma)}^2 + \norm{\phi}{H^{-1/2}(\Gamma)}^2 \simeq \dual{\hyp u}{u}_\Gamma +
    \dual\phi{\slo\phi}_\Gamma + |\dual{1}{\V(u,\phi)}_\Gamma|^2
  \end{align*}
  for all $(u,\phi)\in H^{1/2}(\Gamma)\times H^{-1/2}(\Gamma)$,
  and the involved constants only depend on $\Gamma$.
\end{lemma}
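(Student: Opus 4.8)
The plan is to prove the two inequalities $\lesssim$ and $\gtrsim$ separately.

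For the direction $\gtrsim$, i.e., the upper bound of the right-hand side by the norm on the left, I would simply invoke the boundedness of the boundary integral operators stated in Section~\ref{sec_BIO}. Indeed, $\dual{\hyp u}{u}_\Gamma \le \norm{\hyp u}{H^{-1/2}(\Gamma)}\norm{u}{H^{1/2}(\Gamma)} \lesssim \norm{u}{H^{1/2}(\Gamma)}^2$, and likewise $\dual{\phi}{\slo\phi}_\Gamma \lesssim \norm{\phi}{H^{-1/2}(\Gamma)}^2$. For the rank-one term we use boundedness of $\V$ in \eqref{op_bound} together with the continuous embedding $H^{1/2}(\Gamma)\hookrightarrow L_2(\Gamma)$, so that $|\dual{1}{\V(u,\phi)}_\Gamma| \le \norm{1}{L_2(\Gamma)}\norm{\V(u,\phi)}{L_2(\Gamma)} \lesssim \norm{u}{H^{1/2}(\Gamma)} + \norm{\phi}{H^{-1/2}(\Gamma)}$. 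Squaring and adding yields the bound.

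For the direction $\lesssim$, the natural route is a compactness (Peetre–Tartar) argument. First I would establish a \emph{non-degeneracy} statement: if $(u,\phi)\in H^{1/2}(\Gamma)\times H^{-1/2}(\Gamma)$ satisfies $\dual{\hyp u}{u}_\Gamma = 0$, $\dual{\phi}{\slo\phi}_\Gamma = 0$, and $\dual{1}{\V(u,\phi)}_\Gamma = 0$, then $(u,\phi)=0$. Here I use that $\slo$ is positive semidefinite with trivial kernel under the diameter assumption (so $\phi=0$ immediately), and that $\hyp$ is positive semidefinite with $\ker\hyp=\mathrm{span}\{1\}$ (so $u$ is constant, $u\equiv c$); then $\V(c,0) = (1/2-\dlo)c$, and since $\dlo 1 = 1/2$ (a standard identity, as $1$ is the Dirichlet trace of the constant harmonic function whose double-layer potential reproduces it), we get $\V(c,0)=0$ identically — hmm, that would make the rank-one term vacuous. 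So instead I should be more careful: $\V$ applied to Cauchy data uses the relation $(1/2-\dlo)$, and for $\phi=0$, $u=c$ we need a term that detects $c$. One must check $\dual{1}{(1/2-\dlo)c}_\Gamma = c\,\dual{1}{(1/2-\dlo)1}_\Gamma$; using $\dlo'1 = -1/2 + (\text{something})$... Rather than chase identities, the cleaner approach following \cite{AuradaFFKMP_13_CFB,dissTOFU} is: decompose $u = u_0 + c$ with $\dual{1}{u_0}_\Gamma$-type normalization, note $\hyp$ is elliptic on the complement of constants, and verify that the stabilization term $\dual{1}{\V(u,\phi)}_\Gamma$ together with the single-layer term pins down the constant — this is where I expect the main obstacle to lie, namely checking that the specific rank-one functional $\dual{1}{\V(\cdot,\cdot)}_\Gamma$ is not identically zero on the residual kernel $\mathrm{span}\{(1,0)\}$.

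Once non-degeneracy is in hand, the equivalence follows by the standard compactness argument: suppose the inequality $\lesssim$ fails; then there is a sequence $(u_n,\phi_n)$ with $\norm{u_n}{H^{1/2}(\Gamma)}^2 + \norm{\phi_n}{H^{-1/2}(\Gamma)}^2 = 1$ and right-hand side tending to $0$. By reflexivity extract a weakly convergent subsequence with limit $(u,\phi)$. Using that $\hyp$ and $\slo$ define (semi-)inner products whose associated seminorms are, respectively, equivalent to $\norm{\cdot}{H^{1/2}(\Gamma)}$ modulo constants and to $\norm{\cdot}{H^{-1/2}(\Gamma)}$, together with compactness of the embeddings $H^{1/2}(\Gamma)\hookrightarrow L_2(\Gamma)$ and $H^{-1/2}(\Gamma)\hookrightarrow H^{-1/2-\varepsilon}(\Gamma)$ to upgrade weak convergence to norm convergence in the seminorm-deficient directions, one concludes $(u_n,\phi_n)\to(u,\phi)$ strongly, hence $\norm{(u,\phi)}{}=1$, while the limit satisfies the three vanishing conditions; this contradicts non-degeneracy. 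The dependence of the constants on $\Gamma$ only is clear since all operators and embeddings involved depend only on $\Gamma$.
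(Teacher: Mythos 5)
Your overall strategy---boundedness of $\hyp$, $\slo$, $\V$ for the direction ``$\gtrsim$'', and a compactness (Peetre--Tartar) argument reduced to a non-degeneracy statement for the direction ``$\lesssim$''---is exactly the route the paper has in mind: it gives no proof beyond the remark that the equivalence follows by a standard compactness argument, with reference to \cite{AuradaFFKMP_13_CFB,dissTOFU}. The upper bound is fine, and so is the skeleton of the compactness step: $\dual{\hyp\cdot}{\cdot}_\Gamma$ is elliptic on $H^{1/2}(\Gamma)$ modulo constants, $\dual{\cdot}{\slo\cdot}_\Gamma$ is elliptic on $H^{-1/2}(\Gamma)$ (here the assumption $\diam(\Omega)<1$ enters for $d=2$), and the rank-one functional $(u,\phi)\mapsto\dual{1}{\V(u,\phi)}_\Gamma$ is bounded, hence compact, so everything reduces to showing that the three terms on the right-hand side vanish simultaneously only for $(u,\phi)=(0,0)$.

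But that reduction is exactly the step you leave open, and your trial computation of it points the wrong way: you suggest $\dlo 1=1/2$, which would give $\V(1,0)=0$ and would make the stabilization term blind to the residual kernel $\mathrm{span}\{(1,0)\}$---if that were true, the lemma itself would be false, so this cannot be left as an ``obstacle I expect''. Under the paper's conventions the identity comes out the other way: with $G$ from Section~\ref{sec_BIO} and $\nn_\Omega$ exterior to $\Omega$, the divergence theorem gives $\dlp 1=\int_\Omega\Delta_y G(\cdot-y)\,dy=-1$ in $\Omega$, hence $\trace_\Omega\dlp 1=-1$ and, by the definition $\dlo=1/2+\trace_\Omega\dlp$ in \eqref{slo_dlo}, $\dlo 1=-1/2$, so that $\V(1,0)=(1/2-\dlo)1=1$ and $\dual{1}{\V(1,0)}_\Gamma=|\Gamma|>0$. (Equivalently, apply Lemma~\ref{la_op} to the constant interior harmonic function, whose Cauchy data are $(1,0)$.) With this verified, your non-degeneracy argument closes: $\dual{\phi}{\slo\phi}_\Gamma=0$ forces $\phi=0$ by $\slo$-ellipticity, $\dual{\hyp u}{u}_\Gamma=0$ forces $u\equiv c$ since $\ker\hyp=\mathrm{span}\{1\}$, and then $0=\dual{1}{\V(c,0)}_\Gamma=c\,|\Gamma|$ forces $c=0$; the compactness argument then yields the lemma. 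Note also that once $\V(1,0)=1$ is in hand you can avoid compactness altogether: split $u=u_0+c$ with $\int_\Gamma u_0\,ds=0$, bound $|c|\,|\Gamma|\le|\dual{1}{\V(u,\phi)}_\Gamma|+|\dual{1}{\V(u_0,\phi)}_\Gamma|$ and absorb the second term using boundedness of $\V$ and the ellipticity of $\hyp$ on mean-zero functions and of $\slo$, which gives the estimate with explicit constants depending only on $\Gamma$.
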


\subsection{Proof of Theorem~\ref{thm_equiv}} \label{sec_pf_equiv}
It is well known that \eqref{tp} is uniquely solvable, see, e.g., \cite{CostabelS_85_DBI}.
Let $(u,u^c)$ solve \eqref{tp} and let $\uu=(u,\ssigma,\wat u,\wat\sigma)$ be defined as
in the theorem. Then, by construction, $J_1(\uu;f,u_0,\phi_0)=0$, i.e., $\uu$ solves \eqref{LS}.
It remains to show that any solution $\uu$ to $\eqref{LS}$ is unique, and that $\uu_\hp$
satisfies the quasi-optimal error estimate. Both follow from Lemma~\ref{lem_dpgcoupling}.

To this end, we note that the right-hand side functionals $L_V(\Theta_1\,\cdot),\;L_\LS:\;U\to V'$
are bounded by the boundedness of $\Theta_1$ (see the proof of Lemma~\ref{lem_dpgcoupling}),
$L_V$ (since $f\in L_2(\Omega)$), $\V$ (by \eqref{op_bound}), and $\gamma$ (by Lemma~\ref{la_restriction}).
It remains to show the $U$-ellipticity of the bilinear form
$b(\cdot,\Theta_1\cdot)+c_\LS(\cdot,\cdot)$.

Let $\uu=(u,\ssigma,\wat u,\wat\sigma)\in U$ be given.
By Lemmas~\ref{lem_dg11},~\ref{la_ext}, and~\ref{la_kerB} we know that
\begin{align} \label{pf_equiv_1}
  \norm{B\uu}{V'} \gtrsim \norm{\uu-\ext\gamma_0\uu}{U}.
\end{align}
Furthermore, the continuity of the extension operator (see Lemma~\ref{la_ext}),
Lemmas~\ref{la_kerB},~\ref{la_op}, and the triangle inequality show that
\begin{align} \label{pf_equiv_2}
   \norm{\ext\gamma_0\uu}{U}
   &\lesssim
   \|\gamma_0\uu\|_{H^{1/2}(\Gamma)}
   =
   \|\V(\gamma\ext\gamma_0\uu)\|_{H^{1/2}(\Gamma)}
   \nonumber\\
   &\le
   \|\V(\gamma\uu)\|_{H^{1/2}(\Gamma)}
   +
   \|\V(\gamma(\uu-\ext\gamma_0\uu))\|_{H^{1/2}(\Gamma)}.
\end{align}
Now, using again the continuity of $\V$ and $\gamma$, estimate~\eqref{pf_equiv_1} yields
\begin{align} \label{pf_equiv_3}
   \|\V(\gamma(\uu-\ext\gamma_0\uu))\|_{H^{1/2}(\Gamma)}
   \lesssim
   \norm{\uu-\ext\gamma_0\uu}{U}
   \lesssim
   \|B\uu\|_{V'}.
\end{align}
Finally, combining \eqref{pf_equiv_1}--\eqref{pf_equiv_3} and the triangle inequality proves that
\begin{equation} \label{LS_ell}
   \norm{\uu}{U}
   \lesssim
   \|\V(\gamma\uu)\|_{H^{1/2}(\Gamma)} + \|B\uu\|_{V'}
   \quad\forall\uu\in U.
\end{equation}
Recalling the definition of $c_\LS(\cdot,\cdot)$ and the relation
$\|B\uu\|_{V'}^2=b(\uu,\Theta_1\uu)$, this proves the $U$-ellipticity of the bilinear form.\qed

\subsection{Proof of Theorem~\ref{thm_W}}\label{sec_W_proof}
We only have to slightly vary the proof of Theorem~\ref{thm_equiv} from Section~\ref{sec_pf_equiv}.

Integration by parts and the Calder\'on system \eqref{BIE} show that the solution $\uu$ of \eqref{tp}
also solves problem~\eqref{eq_W}.
It remains to prove that~\eqref{eq_W} is uniquely solvable and that discrete approximations
fulfill the best approximation property. Again we use Lemma~\ref{lem_dpgcoupling}.

In the current case, boundedness of $c(\cdot,\cdot)=c_\BEW(\cdot,\cdot)$
follows from the mapping properties of the integral operators $\V$ and $\W$
(see Section~\ref{sec_BIO}), and the boundedness of $\gamma$ and $\gamma_0$ by Lemma~\ref{la_restriction}.
It remains to prove $U$-ellipticity of the bilinear form
$b(\cdot,\Theta_\beta\,\cdot) + c(\cdot,\cdot)$.

We use the relations from Lemma~\ref{la_op} and the norm equivalence
$\norm{v}{H^1(\Omega)}^2 \simeq \norm{\nabla v}{L_2(\Omega)}^2 + |\dual{1}{v}_\Gamma|^2$ for $v\in H^1(\Omega)$.
Furthermore, let $\widetilde u$ denote the first component of $\EE\gamma_0\uu$.
Integration by parts and the harmonicity of $\widetilde u$ in $\Omega$ show that
$\dual{\nabla\widetilde u}{\nabla\widetilde u}_\Omega = \dual{\gamma_\nn\widetilde u}{\gamma_0\uu}_\Gamma$.
We obtain
\begin{align} \label{cW_ell}
  c(\EE\gamma_0\uu,\EE\gamma_0\uu)
  &=
  \dual{\hyp\wat u + (\tfrac12+\adlo)\gamma_\nn\EE\gamma_0\uu}{\wat u}_\Gamma
  + |\dual{1}{\V(\gamma\EE\gamma_0\uu)}_\Gamma|^2 \nonumber\\
  &= \dual{\gamma_\nn \EE\gamma_0\uu}{\wat u}_\Gamma + |\dual{1}{\wat u}_\Gamma|^2
  =
  \norm{\nabla\widetilde u}{L_2(\Omega)}^2
  + |\dual{1}{\wat u}_\Gamma|^2 \nonumber\\
  &\simeq
  \norm{\EE\gamma_0\uu}{H^1(\Omega)}^2
  \ge
  \norm{\gamma_0\uu}{H^{1/2}(\Gamma)}^2 \gtrsim \norm{\EE\gamma_0\uu}U^2.
\end{align}
By Lemmas~\ref{lem_dg11},~\ref{la_ext}, and~\ref{la_kerB} we bound
\begin{align*}
  \norm{\uu}{U}^2 \lesssim \norm{\uu-\ext\gamma_0\uu}{U}^2 + \norm{\ext\gamma_0\uu}{U}^2
                  \lesssim \norm{B\uu}{V'}^2 + \norm{\ext\gamma_0\uu}{U}^2.
\end{align*}
Combining the last two estimates, and using the boundedness of $c(\cdot,\cdot)$ and Young's inequality,
we conclude that, for $\delta>0$,
\begin{align*}
  \norm{\uu}U^2
  &\lesssim
  \norm{B\uu}{V'}^2 + \norm{\EE\gamma_0\uu}U^2
  \lesssim
  \norm{B\uu}{V'}^2 + c(\EE\gamma_0\uu,\EE\gamma_0\uu) \\
  &=
  \norm{B\uu}{V'}^2 + c(\EE\gamma_0\uu,\EE\gamma_0\uu-\uu) + c(\EE\gamma_0\uu-\uu,\uu) + c(\uu,\uu) \\
  &\lesssim
  \norm{B\uu}{V'}^2 + \norm{\EE\gamma_0\uu-\uu}U\norm{\uu}U + c(\uu,\uu) \\
  &\lesssim
  (1+\delta^{-1}) \norm{B\uu}{V'}^2 + \delta \norm{\uu}U^2 + c(\uu,\uu).
\end{align*}
Subtracting the last term for a sufficiently small $\delta>0$ this proves the existence of $\beta_0>0$
such that, for all $\beta\geq\beta_0$, there holds
$\norm{\uu}U^2 \lesssim \beta \norm{B\uu}{V'}^2 + c(\uu,\uu) = b(\uu,\Theta_\beta\uu) + c(\uu,\uu)$.
This finishes the proof of Theorem~\ref{thm_W}.
\qed

\subsection{Proof of Theorem~\ref{thm_V}}\label{sec_V_proof}

The proof follows the same lines as the proof of Theorem~\ref{thm_W} in Section~\ref{sec_W_proof}.
The only difference is that we have to prove ellipticity of $c(\cdot,\cdot) = c_\BEV(\cdot,\cdot)$ on
the kernel of $B$, cf.~\eqref{cW_ell}. 
To see this, we could again make use of the identity
$\slo\gamma_\nn\EE\gamma_0\uu = (\tfrac12+\dlo)\gamma_0\uu$ from Lemma~\ref{la_op}. 
However, applying techniques from~\cite{Sayas_09_VJN,Steinbach_11_NSO}, our proof extends
to more general operators in~\eqref{tpa}.

Let us abbreviate $\phi:=\gamma_\nn\EE\gamma_0\uu$ and let $\widetilde u\in H^1(\Omega)$
be the first component of $\EE\gamma_0\uu$. Then
$\dual{\phi}{\wat u}_\Gamma = \norm{\nabla \widetilde u}{L_2(\Omega)}^2$, and we calculate
\begin{align*}
  c(\EE\gamma_0\uu,\EE\gamma_0\uu)
  &=
  \dual{\gamma_\nn\EE\gamma_0\uu}{\slo\gamma_\nn\EE\gamma_0\uu + (\tfrac12-\dlo)\wat u}_\Gamma
  + |\dual{1}{\V(\gamma\EE\gamma_0\uu)}_\Gamma|^2 \\
  &= 
  \dual{\phi}{\slo\phi}_\Gamma 
  - \dual{(\tfrac12+\adlo)\phi}{\wat u}_\Gamma 
  + \dual{\phi}{\wat u}_\Gamma
  + |\dual{1}{\V(\gamma\EE\gamma_0\uu)}_\Gamma|^2 \\
  &= \dual\phi{\slo\phi}_\Gamma - \dual{(\tfrac12+\adlo)\phi}{\wat u}_\Gamma 
  + \norm{\nabla\widetilde u}{L_2(\Omega)}^2
  + |\dual{1}{\V(\gamma\EE\gamma_0\uu)}_\Gamma|^2.
\end{align*}
By using that $\partial_\nn\slp\psi = (\tfrac12+\adlo)\psi$ for $\psi\in H^{-1/2}(\Gamma)$,
it can be shown that
\begin{align} \label{viertel}
  - \dual{(\tfrac12+\adlo)\phi}{\wat u}_\Gamma \geq -\tfrac12 \dual{\phi}{\slo\phi}_\Gamma
  - \tfrac12\norm{\nabla\widetilde u}{L_2(\Omega)}^2.
\end{align}
For simplicity we refer to \cite[Proof of Theorem~9]{AuradaFFKMP_13_CFB} for a proof.
By the same reference, \cite[Lemma~10]{AuradaFFKMP_13_CFB}, there holds the norm equivalence
\[
   \dual{\phi}{\slo\phi}_\Gamma
   + \norm{\nabla\widetilde u}{L_2(\Omega)}^2
   + |\dual{1}{\V(\gamma\EE\gamma_0\uu)}_\Gamma|^2
   \simeq
   \norm{\widetilde u}{H^1(\Omega)}^2 + \norm{\phi}{H^{1/2}(\Gamma)}^2
\]
so that
\begin{align*}
  c(\EE\gamma_0\uu,\EE\gamma_0\uu)
  \geq
  \tfrac12 \dual{\phi}{\slo\phi}_\Gamma + \tfrac12\norm{\nabla\widetilde u}{L_2(\Omega)}^2
  + |\dual{1}{\V(\gamma\EE\gamma_0\uu)}_\Gamma|^2 \gtrsim \norm{\widetilde u}{H^1(\Omega)}^2.
\end{align*}
It follows that
$c(\EE\gamma_0\uu,\EE\gamma_0\uu)\gtrsim \norm{\EE\gamma_0\uu}U^2$, which finishes the proof.
\qed

\subsection{Proof of Theorem~\ref{thm_C}}\label{sec_C_proof}

As for the proof of Theorems~\ref{thm_W} and~\ref{thm_V} in Sections~\ref{sec_W_proof}
and~\ref{sec_V_proof}, respectively, we only have to show the $U$-ellipticity of the bilinear form
$b(\cdot,\Theta_\beta\cdot) + c_\BEC(\cdot,\cdot)$.
We now make use of Lemma~\ref{la_normEquiv} instead of Lemma~\ref{la_op}.
As in Section~\ref{sec_W_proof}, we have
\begin{align*}
  \norm{\uu}U^2 \leq \norm{\uu-\EE\gamma_0\uu}U^2 + \norm{\EE\gamma_0\uu}U^2 \lesssim \norm{B\uu}{V'}^2 +
  \norm{\gamma_0\uu}{H^{1/2}(\Gamma)}^2.
\end{align*}
Applying Lemma~\ref{la_normEquiv}, we deduce that
\begin{align*}
\lefteqn{
  \norm{\gamma_0\uu}{H^{1/2}(\Gamma)}^2
  \le
  \norm{\gamma_0\uu}{H^{1/2}(\Gamma)}^2
  + \norm{\gamma_\nn\uu}{H^{-1/2}(\Gamma)}^2
} \\
  &\simeq
  \dual{\hyp \gamma_0\uu}{\gamma_0\uu}_\Gamma
  + \dual{\gamma_\nn\uu}{\slo\gamma_\nn\uu}_\Gamma
  + |\dual{1}{\V\gamma\uu}_\Gamma|^2 \\
  &=
  \dual{\hyp\gamma_0\uu + (\tfrac12+\adlo)\gamma_\nn\uu}{\gamma_0\uu}_\Gamma 
  + \dual{\gamma_\nn\uu}{(\tfrac12-\dlo)\gamma_0\uu+\slo\gamma_\nn\uu}_\Gamma
  + |\dual{1}{\V\gamma\uu}_\Gamma|^2
  - \dual{\gamma_\nn\uu}{\gamma_0\uu}_\Gamma \\
  &=
  c_\BEC(\uu,\uu)
  - \dual{\gamma_\nn(\uu-\EE\gamma_0\uu)}{\gamma_0\uu}_\Gamma
  - \dual{\gamma_\nn\EE\gamma_0\uu}{\gamma_0\uu}_\Gamma.
\end{align*}
Since $\EE\gamma_0$ is a harmonic function there holds $\dual{\gamma_\nn\EE\gamma_0\uu}{\gamma_0\uu}_\Gamma 
= \dual{\nabla\widetilde u}{\nabla\widetilde u}_\Omega \geq 0$
with $\widetilde u$ being the first component of $\EE\gamma_0\uu \in U$. We end up with
\begin{align*}
  \norm{\gamma_0\uu}{H^{1/2}(\Gamma)}^2 \lesssim 
  c_\BEC(\uu,\uu) -\dual{\gamma_\nn(\uu-\EE\gamma_0\uu)}{\gamma_0\uu}_\Gamma 
  \lesssim c_\BEC(\uu,\uu) + \norm{\uu-\EE\gamma_0\uu}U\norm{\uu}U.
\end{align*}
To finish the proof, we argue as in the proof of Theorem~\ref{thm_W}, i.e.,
applying Young's inequality and the continuity $\norm{\uu-\EE\gamma_0\uu}U\lesssim \norm{B\uu}{V'}$.
\qed

\section{Numerical experiments}\label{sec_num}
In this section we present numerical experiments in two dimensions for the three coupling schemes
from Sections~\ref{sec_LS} (with least-squares BEM),~\ref{sec_W} (with hypersingular operator),
and~\ref{sec_V} (with weakly singular operator). We also discuss their implementation.
Numerical results for the coupling with both Calder\'on equations from Section~\ref{sec_C}
are very similar to the ones of the other variants and are not reported.

Throughout, we consider regular and quasi-uniform triangulations with compact triangles.
For uniform refinements we have a sequence of uniform meshes
$\TT_0,\dots,\TT_L =: \TT$ of $\Omega$. Here, $\TT_\ell$ is the uniform refinement of $\TT_{\ell-1}$,
i.e., every triangle from $\TT_{\ell-1}$ is split into four triangles.
Moreover, $\cS_\ell$ denotes the skeleton induced by $\TT_\ell$ and, as before, $\cS$ is the skeleton of $\TT$.
The mesh-sizes $h_0\geq\dots\geq h_L=:h$ denote the largest diameters of the elements in
$\TT_\ell$ ($l=0,\ldots,L$).  Note that there holds $h_\ell = 2^{-\ell}h_0$.

Let $P^p(\TT_\ell)$ be the space of $\TT_\ell$-elementwise polynomials of degree $\leq p$,
and $S^1(\cS_\ell)$ the space of edge-wise affine and globally continuous functions on $\cS_\ell$.
Furthermore, $P^0(\cS_\ell)$ denotes the space of edge-wise constant functions on $\cS_\ell$.
Then we choose the lowest-order discrete trial spaces
\[
  U_\ell := P^0(\TT_\ell) \times \left[P^0(\TT_\ell)\right]^d \times 
  S^1(\cS_\ell) \times P^0(\cS_\ell)
\]
and our ansatz space for all variants is $U_\hp:=U_L$.

The implementation of the trial-to-test operator $\Theta_\beta:U\to V$ requires an approximation.
A standard way is to consider an enriched subspace $\widetilde V\subset V$ that uses the same mesh
$\TT$ as $U_\hp$ but is enriched by increasing polynomial degrees.
We select $\widetilde V:=P^2(\TT) \times \left[ P^2(\TT) \right]^2$ and consider, instead
of $\Theta_\beta$, the approximated trial-to-test operator
\begin{align*}
  \widetilde\Theta:\; U_\hp\to\widetilde V:\quad
  \ip{\widetilde\Theta\ww}{\vv}_V = b(\ww,\vv) \quad\forall\vv\in \widetilde V.
\end{align*}
Note that we have selected the weight $\beta=1$ and, in fact, all our numerical results are based on this
choice. For an analysis of the approximation of the trial-to-test operator we refer to
\cite{GopalakrishnanQ_14_APD}. There, it is shown that the increase of polynomial degrees by two
in two dimensions (to generate the enriched space $\widetilde V$) maintains validity of the inf-sup condition.

The boundary integral operators appearing in the Galerkin bilinear forms of our coupling schemes
are implemented in a standard way.
However, implementation of the least-squares bilinear forms involving boundary integral operators
is more complicated. This is discussed in the next section.
Throughout, our examples are conducted in \textsc{MATLAB} and for the discretization of
boundary integral operators we use the library \textsc{HILBERT}~\cite{AuradaEFFFGKMP_HMI}.

\subsection{Implementation of the coupling with least-squares BEM}\label{sec_LS_impl}

An implementation of our least-squares coupling scheme faces two problems: both the
operator $\V(\gamma(\cdot))\in H^{1/2}(\Gamma)$ and the $H^{1/2}(\Gamma)$-inner product
must be approximated.

First we discuss an approximation of $\V(\gamma(\cdot))$.
Let $\Pi_\ell: L_2(\Gamma) \to S^1(\cS_\ell|_\Gamma)$ denote the $L_2$-orthogonal projection.
Here, $S^1(\cS_\ell|_\Gamma)$ is the restriction of $S^1(\cS_\ell)$ to $\Gamma$. We set
$\Pi_h:=\Pi_L$ and replace the bilinear form
\[
   b(\uu,\Theta_1\ww) + c_\LS(\uu,\ww)
   = \ip{B\uu}{B\ww}_{V'} + \ip{\V(\gamma(\uu))}{\V(\gamma(\ww))}_{H^{1/2}(\Gamma)}
\]
(cf.~\eqref{ultra_bil_L},~\eqref{eq:lsc}) by the discretized bilinear form
\begin{align*}
  a_h(\uu,\ww)
  :=
  \ip{B\uu}{B\ww}_{V'} + \ip{\Pi_h\V(\gamma(\uu))}{\Pi_h\V(\gamma(\ww))}_{H^{1/2}(\Gamma)}
  \quad\forall\uu,\ww\in U_{hp}.
\end{align*}
In the same manner, we replace the right-hand side functional
\[
   L_V(\Theta_1\ww)+L_\LS(\ww)
   =
   \ip{L_V}{B\ww}_{V'} + \ip{\V(u_0,\phi_0)}{\V(\gamma(\ww))}_{H^{1/2}(\Gamma)}
\]
by
\begin{align*}
  L_h(\ww) :=
  \ip{L_V}{B\ww}_{V'} + \ip{\Pi_h\V(u_0,\phi_0)}{\Pi_h\V(\gamma(\ww))}_{H^{1/2}(\Gamma)}
  \quad\forall\ww\in U_{hp}.
\end{align*}
Then, our discrete DPG scheme coupled with least-squares boundary elements is
\begin{align}\label{eq:LS:discrete}
  \uu_\hp\in U_\hp:\quad a_h(\uu_\hp, \ww) = L_h(\ww) \quad\forall\ww \in U_\hp.
\end{align}
As previously mentioned, we also approximate the trial-to-test operator but this is not analyzed here.
Quasi-optimality of the discrete scheme is maintained, as the following theorem shows.

\begin{theorem}\label{thm:LS:discrete}
  Let $P_h: H^{1/2}(\Gamma)\to S^1(\cS|_\Gamma)$ denote any bounded projection, i.e.,
  there is a $\TT$-independent constant $C>0$ such that
  \[
    \norm{P_hu}{H^{1/2}(\Gamma)} \leq C \norm{u}{H^{1/2}(\Gamma)}
    \quad\forall u\in H^{1/2}(\Gamma) \quad\text{and}\quad 
    P_h u_h = u_h \quad\forall u_h \in S^1(\cS|_\Gamma).
  \]
  Then, $|a_h(\uu,\ww)| \lesssim \norm{\uu}U\norm{\vv}U$ for all $\uu,\ww\in U$ and
  \begin{align}\label{eq:LS:equiv}
    \norm{\uu}U \simeq \norm{B\uu}{V'} + \norm{P_h\V(\gamma\uu)}{H^{1/2}(\Gamma)} 
    \quad\forall \uu\in U_\hp.
  \end{align}
  The involved constants do not depend on $\TT$.

  In particular, one can choose the $L_2$-orthogonal projection $P_h = \Pi_h$.
\end{theorem}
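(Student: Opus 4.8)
The plan is to reduce the claimed equivalence \eqref{eq:LS:equiv} to the continuous ellipticity already established in \eqref{LS_ell} from the proof of Theorem~\ref{thm_equiv}, by inserting the bounded projection $P_h$ and controlling the difference between $\V(\gamma\uu)$ and $P_h\V(\gamma\uu)$ on the discrete space $U_\hp$. The boundedness of $a_h$ is routine: $\ip{B\cdot}{B\cdot}_{V'}$ is bounded since $\Theta_1$ is bounded (as in Lemma~\ref{lem_dpgcoupling}), and the second term is bounded by the continuity of $\V$ (see~\eqref{op_bound}), of $\gamma$ (Lemma~\ref{la_restriction}), and of $P_h$. The inequality ``$\gtrsim$'' in \eqref{eq:LS:equiv} is also immediate from the same continuity estimates, so the whole content is the bound $\norm{\uu}U \lesssim \norm{B\uu}{V'} + \norm{P_h\V(\gamma\uu)}{H^{1/2}(\Gamma)}$ for $\uu\in U_\hp$.

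For that direction I would revisit the chain \eqref{pf_equiv_1}--\eqref{pf_equiv_3}. The key structural fact, from Lemma~\ref{la_op} together with $\gamma\ext\gamma_0\uu \in \gamma(\ker B)$, is $\V(\gamma\ext\gamma_0\uu) = \gamma_0\ext\gamma_0\uu = \gamma_0\uu$; and since $\uu\in U_\hp$ has $\gamma_0\uu\in S^1(\cS|_\Gamma)$, the projection acts as the identity there, so $P_h\V(\gamma\ext\gamma_0\uu) = \gamma_0\uu$ as well. Hence, mimicking \eqref{pf_equiv_2},
\begin{align*}
  \norm{\ext\gamma_0\uu}{U}
  \lesssim
  \norm{\gamma_0\uu}{H^{1/2}(\Gamma)}
  =
  \norm{P_h\V(\gamma\ext\gamma_0\uu)}{H^{1/2}(\Gamma)}
  \le
  \norm{P_h\V(\gamma\uu)}{H^{1/2}(\Gamma)}
  +
  \norm{P_h\V(\gamma(\uu-\ext\gamma_0\uu))}{H^{1/2}(\Gamma)}.
\end{align*}
The last term is controlled, using the boundedness of $P_h$, $\V$, $\gamma$ and estimate \eqref{pf_equiv_1} (which is purely continuous and needs no discreteness), by $\norm{\uu-\ext\gamma_0\uu}{U} \lesssim \norm{B\uu}{V'}$, exactly as in \eqref{pf_equiv_3}. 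Combining with \eqref{pf_equiv_1} and the triangle inequality $\norm{\uu}U \le \norm{\uu-\ext\gamma_0\uu}U + \norm{\ext\gamma_0\uu}U$ yields the desired bound. The final sentence of the theorem is then just the observation that $\Pi_h$, the $L_2(\Gamma)$-orthogonal projection onto $S^1(\cS|_\Gamma)$, is $H^{1/2}(\Gamma)$-bounded on quasi-uniform meshes (a standard inverse-estimate / Aubin--Nitsche argument), so it is an admissible choice of $P_h$.

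The main obstacle is the $H^{1/2}(\Gamma)$-stability of $P_h$, which is a genuine hypothesis for a general projection and is exactly why the theorem is phrased with an abstract $P_h$; for the concrete choice $P_h=\Pi_h$ one must invoke that on shape-regular, quasi-uniform meshes the $L_2$-projection onto continuous piecewise linears is bounded in $H^s(\Gamma)$ for $s\in[0,1]$ (it is bounded in $H^1$ by an inverse estimate and in $L_2$ trivially, then interpolate). Everything else is a transcription of the continuous proof of Theorem~\ref{thm_equiv} with $\V(\gamma\cdot)$ replaced by $P_h\V(\gamma\cdot)$, the only point requiring care being that the identity $P_h\V(\gamma\ext\gamma_0\uu)=\gamma_0\uu$ uses both Lemma~\ref{la_op} \emph{and} the discreteness $\gamma_0\uu\in S^1(\cS|_\Gamma)$, so the equivalence genuinely holds only on $U_\hp$, not on all of $U$.
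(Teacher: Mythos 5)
Your proposal is correct and follows essentially the same route as the paper: both directions are obtained from the continuity of $B$, $\V$, $\gamma$, $P_h$, and the lower bound for $\norm{\uu}{U}$ is derived exactly as in the paper by combining $\norm{\uu-\ext\gamma_0\uu}{U}\lesssim\norm{B\uu}{V'}$ with the identity $P_h\V(\gamma\ext\gamma_0\uu)=\gamma_0\uu$ (Lemma~\ref{la_op} plus the projection property on $S^1(\cS|_\Gamma)$, valid only for $\uu\in U_\hp$) and a triangle inequality. The only cosmetic difference is that the paper justifies the admissibility of $P_h=\Pi_h$ by citing the literature rather than sketching the $L_2$--$H^1$ interpolation argument, which also covers non-quasi-uniform meshes.
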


\begin{proof}
  First we note that the boundedness of $P_h$ implies boundedness of $|a_h(\cdot,\cdot)|$.
  Furthermore, the boundedness of $P_h$ and stability of the continuous problem imply
  \begin{align*}
     \norm{B\uu}{V'} + \norm{P_h\V(\gamma\uu)}{H^{1/2}(\Gamma)} 
     \lesssim \norm{B\uu}{V'} + \norm{\V(\gamma\uu)}{H^{1/2}(\Gamma)} 
     \simeq \norm{\uu}U \quad\forall\uu\in U_\hp.
  \end{align*}
  The remainder of the proof is a slight modification of the proof of Theorem~\ref{thm_equiv}.
  By the projection property of $P_h$ we have
  \begin{align*}
    \norm{\uu}U \lesssim \norm{B\uu}{V'} + \norm{\gamma_0 \uu}{H^{1/2}(\Gamma)} 
    = \norm{B\uu}{V'} + \norm{P_h \gamma_0 \uu}{H^{1/2}(\Gamma)}
    \quad\forall\uu\in U_\hp.
  \end{align*}
  Relation $\gamma_0 \uu_{hp} = \V(\gamma\ext\gamma_0\uu_{hp})$ by Lemma~\ref{la_op} leads to
  \begin{align*}
    \norm{P_h \gamma_0 \uu}{H^{1/2}(\Gamma)} &= \norm{P_h \V(\gamma\ext\gamma_0 \uu)}{H^{1/2}(\Gamma)}
    \le \norm{P_h \V(\gamma\uu)}{H^{1/2}(\Gamma)} 
    + \norm{P_h \V(\gamma\ext\gamma_0\uu-\gamma\uu)}{H^{1/2}(\Gamma)}  \\
    &\le \norm{P_h \V(\gamma\uu)}{H^{1/2}(\Gamma)} + \norm{\uu-\ext\gamma_0 \uu}U
    \quad\forall\uu\in U_\hp.
  \end{align*}
  Hence the bound $\norm{\uu-\ext\gamma_0 \uu}U \lesssim \norm{B\uu}{V'}$
  finishes the proof of~\eqref{eq:LS:equiv}.

  Finally, boundedness of the $L_2$-projection in $H^{1/2}(\Gamma)$ is discussed, for instance,
  in~\cite{kpp} and the references given there.
\end{proof}

\begin{cor}\label{cor:LS:discrete}
  Let the assumptions of Theorem~\ref{thm:LS:discrete} be satisfied.
  The discrete problem~\eqref{eq:LS:discrete} admits a unique solution $\uu_{hp}\in U_{hp}$
  and, with $\uu\in U$ being the solution of~\eqref{eq:lsc}, there holds
  \begin{align*}
    \norm{\uu-\uu_{hp}}U \lesssim \inf_{\ww\in U_{hp}} \norm{\uu-\ww}U.
  \end{align*}
\end{cor}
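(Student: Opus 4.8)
The plan is to derive this corollary directly from Theorem~\ref{thm:LS:discrete} combined with the abstract machinery already in place, mimicking the derivation of the discrete statement in Theorem~\ref{thm_equiv} via Lemma~\ref{lem_dpgcoupling}. First I would record that the discretized bilinear form $a_h(\cdot,\cdot)$ on $U_{hp}\times U_{hp}$ is bounded (this is the first assertion of Theorem~\ref{thm:LS:discrete}) and $U_{hp}$-elliptic: indeed, applying~\eqref{eq:LS:equiv} with the choice $P_h=\Pi_h$ (which is admissible by the last sentence of the theorem, the $H^{1/2}(\Gamma)$-stability of the $L_2$-projection) gives
\begin{align*}
  \norm{\uu}U^2 \lesssim \norm{B\uu}{V'}^2 + \norm{\Pi_h\V(\gamma\uu)}{H^{1/2}(\Gamma)}^2
  = \ip{B\uu}{B\uu}_{V'} + \ip{\Pi_h\V(\gamma\uu)}{\Pi_h\V(\gamma\uu)}_{H^{1/2}(\Gamma)}
  = a_h(\uu,\uu)
\end{align*}
for all $\uu\in U_{hp}$. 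Since $a_h$ is a bounded, elliptic bilinear form on the finite-dimensional space $U_{hp}$ and $L_h\in U_{hp}'$, the Lax--Milgram lemma yields existence and uniqueness of the discrete solution $\uu_{hp}$ of~\eqref{eq:LS:discrete}.

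Next I would address the quasi-optimality. The subtle point is that~\eqref{eq:LS:discrete} is a \emph{perturbed} (non-conforming) discretization: the exact solution $\uu$ of~\eqref{eq:lsc} does not in general satisfy $a_h(\uu,\ww)=L_h(\ww)$, because $a_h$ and $L_h$ use the projections $\Pi_h\V(\gamma\cdot)$ and $\Pi_h\V(u_0,\phi_0)$ rather than the true operators. Therefore a pure C\'ea argument does not apply and one must instead invoke a Strang-type lemma. The clean way is to observe that $\uu_{hp}$ is the $a_h$-orthogonal projection of nothing in particular, but that for any $\ww\in U_{hp}$ we can write, using ellipticity and then adding and subtracting,
\begin{align*}
  \norm{\uu-\uu_{hp}}U^2 &\lesssim a_h(\uu-\uu_{hp},\uu-\uu_{hp})
  = a_h(\uu-\ww,\uu-\uu_{hp}) + a_h(\ww-\uu_{hp},\uu-\uu_{hp}) \\
  &= a_h(\uu-\ww,\uu-\uu_{hp}) + \bigl(a_h(\ww,\uu-\uu_{hp}) - L_h(\uu-\uu_{hp})\bigr).
\end{align*}
The first term is controlled by boundedness of $a_h$ and absorbed via Young's inequality, contributing $\inf_{\ww\in U_{hp}}\norm{\uu-\ww}U$. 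For the consistency term $a_h(\ww,\vv_{hp})-L_h(\vv_{hp})$ (with $\vv_{hp}:=\uu-\uu_{hp}$) I would add and subtract the exact forms: since $\uu$ solves $b(\uu,\Theta_1\vv_{hp})+c_\LS(\uu,\vv_{hp})=L_V(\Theta_1\vv_{hp})+L_\LS(\vv_{hp})$, the difference reduces to terms of the form
\begin{align*}
  \ip{\Pi_h\V(\gamma\ww)}{\Pi_h\V(\gamma\vv_{hp})}_{H^{1/2}(\Gamma)}
  - \ip{\V(\gamma\uu)}{\V(\gamma\vv_{hp})}_{H^{1/2}(\Gamma)}
  + \ip{\Pi_h\V(u_0,\phi_0)}{\Pi_h\V(\gamma\vv_{hp})}_{H^{1/2}(\Gamma)}
  - \ip{\V(u_0,\phi_0)}{\V(\gamma\vv_{hp})}_{H^{1/2}(\Gamma)},
\end{align*}
which, after using $\Pi_h^2=\Pi_h$ and self-adjointness of $\Pi_h$, can be rearranged so that each summand carries a factor $(1-\Pi_h)\V(\gamma\uu)$, $(1-\Pi_h)\V(u_0,\phi_0)$, or $(1-\Pi_h)\V(\gamma\ww)$ paired against $\Pi_h\V(\gamma\vv_{hp})$. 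Here one uses that $\V(\gamma\uu)=\gamma_0\uu\in S^1(\cS|_\Gamma)$ is in fact reproduced by $\Pi_h$ (Lemma~\ref{la_op} applied to $\uu\in\ker B$, together with $\uu\in U$; more carefully one splits $\uu=\ext\gamma_0\uu+(\uu-\ext\gamma_0\uu)$ and handles the two pieces), and similarly picks $\ww$ to be a good interpolant, so the consistency error is dominated by $\inf_{\ww}\norm{\uu-\ww}U$ plus an approximation term for the data $\V(u_0,\phi_0)$.

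The main obstacle I anticipate is precisely this data-approximation/consistency bookkeeping: one must verify that the $\Pi_h$-projection errors appearing in the Strang term are genuinely of higher order, i.e., bounded by $\inf_{\ww\in U_{hp}}\norm{\uu-\ww}U$, rather than polluting the estimate. The cleanest route — and I suspect the one the authors intend — is to note that because $\gamma_0\uu_{hp}\in S^1(\cS|_\Gamma)$ and, crucially, $\V(\gamma\ext\gamma_0\uu)=\gamma_0\uu$, a discrete solution $\uu_{hp}$ of~\eqref{eq:LS:discrete} can be \emph{reinterpreted} as a Galerkin solution of a modified conforming problem on $U_{hp}$ whose bilinear form agrees with the original on $U_{hp}\times U_{hp}$ up to the $\Pi_h$-identity on that space; then~\eqref{eq:LS:equiv} is exactly the discrete inf-sup/ellipticity needed, and the exact solution's conforming residual against $U_{hp}$ vanishes for the same reason it did in Theorem~\ref{thm_equiv}, except that the data term $\V(u_0,\phi_0)$ must be handled by a commuted-diagram argument: $\ip{\Pi_h\V(u_0,\phi_0)}{\Pi_h\V(\gamma\ww)}=\ip{\V(u_0,\phi_0)}{\Pi_h\V(\gamma\ww)}$ for $\ww\in U_{hp}$, so the right-hand side $L_h$ is just the original $L_\LS$ precomposed with the $\Pi_h$-projection in the boundary slot, and this matches what $a_h$ does. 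Once this identification is made, the result is a genuine C\'ea estimate and the proof closes exactly as that of Theorem~\ref{thm_equiv} did, invoking Lemma~\ref{lem_dpgcoupling} with $b,c$ replaced by their discrete counterparts and using~\eqref{eq:LS:equiv} in place of~\eqref{LS_ell}.
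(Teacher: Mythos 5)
Your existence/uniqueness part is fine and matches the paper: boundedness of $a_h$ plus the equivalence \eqref{eq:LS:equiv} give $U_\hp$-ellipticity, and Lax--Milgram applies. The quasi-optimality part, however, has two genuine problems. First, your opening estimate
$\norm{\uu-\uu_\hp}{U}^2\lesssim a_h(\uu-\uu_\hp,\uu-\uu_\hp)$
is not justified: \eqref{eq:LS:equiv} is a \emph{discrete} norm equivalence, valid only on $U_\hp$ (its proof uses $P_h\gamma_0\uu=\gamma_0\uu$), and $\uu-\uu_\hp\notin U_\hp$. Indeed $a_h$ is not coercive on all of $U$: for $\vv=\ext\varphi$ with $0\neq\varphi\perp_{L_2}S^1(\cS|_\Gamma)$ one has $B\vv=0$ and $\Pi_h\V(\gamma\vv)=\Pi_h\varphi=0$, hence $a_h(\vv,\vv)=0$ while $\norm{\vv}{U}>0$. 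One must instead estimate $\norm{\ww-\uu_\hp}{U}$ for discrete $\ww$ by discrete ellipticity and then use the triangle inequality --- i.e.\ genuinely invoke the second Strang lemma, which is what the paper does.

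Second, and more importantly, the consistency term you struggle with is exactly zero, so none of your bookkeeping is needed (and parts of it are incorrect: $\Pi_h$ is the $L_2(\Gamma)$-orthogonal projection, hence not self-adjoint with respect to $\ip{\cdot}{\cdot}_{H^{1/2}(\Gamma)}$, so $\ip{\Pi_h\V(u_0,\phi_0)}{\Pi_h\V(\gamma\ww)}_{H^{1/2}(\Gamma)}\neq\ip{\V(u_0,\phi_0)}{\Pi_h\V(\gamma\ww)}_{H^{1/2}(\Gamma)}$ in general; moreover the exact solution satisfies $B\uu=L\neq0$ for $f\neq0$, so $\uu\notin\ker B$ and Lemma~\ref{la_op} does not apply to it, and $\gamma_0\uu$ is not an element of $S^1(\cS|_\Gamma)$). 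The paper's observation is that, by Theorem~\ref{thm_equiv}, the exact solution satisfies \eqref{BuL} and \eqref{op} pointwise, i.e.\ $B\uu=L$ and $\V(\gamma\uu)=\V(u_0,\phi_0)$; therefore
\begin{align*}
  a_h(\uu,\ww)-L_h(\ww)
  =\ip{B\uu-L}{B\ww}_{V'}
  +\ip{P_h\V(\gamma\uu-(u_0,\phi_0))}{P_h\V(\gamma\ww)}_{H^{1/2}(\Gamma)}
  =0
  \quad\forall\ww\in U_\hp,
\end{align*}
so the Strang consistency term vanishes identically and the best-approximation estimate follows at once. In short: your perception that the perturbed scheme is inconsistent with the exact solution is the root of the gap; projecting a residual that is already zero changes nothing.
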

\begin{proof}
  By Theorem~\ref{thm:LS:discrete} we can apply the Lax-Milgram lemma.
  It proves that~\eqref{eq:LS:discrete} has a unique solution.
  The second Strang lemma (see, e.g.,~\cite[Lemma~2.25]{ErnGuermond2014}) shows that
  \begin{align*}
    \norm{\uu-\uu_{hp}}U \lesssim \inf_{\ww\in U_{hp}} \norm{\uu-\ww}U + \sup_{\ww\in U_{hp}}
    \frac{|a_h(\uu,\ww)-L_h(\ww)|}{\norm{\ww}U}.
  \end{align*}
  The definitions of $a_h(\cdot,\cdot)$ and $L_h$ infer that
  \begin{align*}
    a_h(\uu,\ww)-L_h(\ww) = \ip{B\uu-L}{B\ww}_{V'} +
    \ip{P_h\V(\gamma\uu-(u_0,\phi_0))}{P_h\V(\gamma\ww)}_{H^{1/2}(\Gamma)}.
  \end{align*}
  Since $\uu$ is the exact (unique) solution, we know by Theorem~\ref{thm_equiv}
  that it satisfies equations~\eqref{BuL} and~\eqref{op}, that is,
  $B\uu=L$ and $\V(\gamma\uu)=\V(u_0,\phi_0)$.
\end{proof}

It remains to discuss some implementational aspects.
Let $\{\uu_j;\; j=1,\dots,\dim(U_\hp)\}$ and $\{\eta_j;\; j=1,\ldots,\dim(S^1(\cS_\ell|_\Gamma))\}$
denote, respectively, bases of $U_\hp$ and $S^1(\cS_\ell|_\Gamma)$.
Define the matrices
\begin{align*}
  \Omat_{jk} &:= \dual{\eta_j}{\V(\gamma\uu_k)}_\Gamma, \quad
  \Mmat_{jk} := \dual{\eta_j}{\eta_k}_\Gamma.
\end{align*}
A matrix representation (of the coefficients in the corresponding bases) of
$\Pi_h \V(\gamma(\cdot))$ is then given by $\Mmat^{-1} \Omat$.
In order to replace the $H^{1/2}(\Gamma)$-inner product for functions
$u_h\in S^1(\cS_\ell|_\Gamma)$ let $\Pmat$ denote a symmetric and positive definite matrix such that
\begin{align*} 
  \xx^T \Pmat \xx
  \simeq
  \norm{u_h}{H^{1/2}(\Gamma)}^2 \quad\forall u_h\in S^1(\cS_\ell|_\Gamma)
  \text{ with coefficient vector } \xx.
\end{align*}
Let $\xx,\yy$ denote the coefficient vectors corresponding to $u_h,w_h\in S^1(\cS_\ell|_\Gamma)$,
respectively. Then, instead of the evaluation of $\ip{u_h}{v_h}_{H^{1/2}(\Gamma)}$, we use $\yy^T\Pmat\xx$.
In particular, we replace the evaluation of
$\ip{\Pi_h\V(\gamma(\uu_{hp}))}{\Pi_h\gamma(\ww_{hp})}_{H^{1/2}(\Gamma)}$ with its matrix form
\[
  \yy^T(\Omat^{T} \Mmat^{-1} \Pmat  \Mmat^{-1}\Omat\xx).
\]
Different choices of $\Pmat$ are possible.
For instance, it is well known that the following stabilization of the hypersingular integral operator satisfies
\begin{align*}
  \dual{\hyp u_h}{u_h}_\Gamma + |\dual{u_h}1_\Gamma|^2 \simeq \norm{u_h}{H^{1/2}(\Gamma)}^2
  \quad\forall u_h\in S^1(\cS_\ell|_\Gamma).
\end{align*}
We define the corresponding inner product as
$\ip{u_h}{v_h}_\hyp := \dual{\hyp u_h}{v_h}_\Gamma + \dual{u_h}1_\Gamma\dual{v_h}1_\Gamma$.

Another possibility is to use multilevel norms.
By \cite[Theorem~1]{os98} and references given there, we have the equivalence
\[
  \norm{u}{H^{1/2}(\Gamma)}^2 \simeq \sum_{\ell=0}^\infty 2^\ell \norm{(\Pi_\ell-\Pi_{\ell-1})u}{L_2(\Gamma)}^2
  \quad\forall u\in H^{1/2}(\Gamma).
\]
Here, $\Pi_{-1}:=0$ and $\Pi_\ell$ is the $L_2$-orthogonal projection onto
$S^1(\cS_\ell|_\Gamma)$ for $\ell\geq 0$ and, as previously defined, $\TT_0,\TT_1,\dots$ is a sequence of
uniformly refined triangulations. Obviously, if $u$ is discrete, the above sum is finite. In particular, we have
\[
  \norm{u_h}{H^{1/2}(\Gamma)}^2 \simeq \sum_{\ell=0}^L 2^\ell \norm{(\Pi_\ell-\Pi_{\ell-1})u_h}{L_2(\Gamma)}^2
  \quad\forall u_h\in S^1(\cS_L|_\Gamma)
\]
with corresponding multilevel inner product
\begin{align*}
  \ip{u_h}{v_h}_\mathrm{ML} := \sum_{\ell=0}^L 2^\ell \dual{(\Pi_\ell-\Pi_{\ell-1})u_h}{v_h}_\Gamma \quad\forall
  u_h,v_h \in S^1(\cS_L|_\Gamma).
\end{align*}
Then, $\ip{u_h}{u_h}_\mathrm{ML} \simeq \norm{u_h}{H^{1/2}(\Gamma)}^2$ for all $u_h\in S^1(\cS_L|_\Gamma)$.

\begin{remark}
  (i) Theorem~\ref{thm:LS:discrete} and Corollary~\ref{cor:LS:discrete} remain valid if we replace the
  $H^{1/2}(\Gamma)$-inner product 
  in the definition of $a_h(\cdot,\cdot)$ and $L_h(\cdot)$ by $\ip\cdot\cdot_{\hyp}$ or $\ip\cdot\cdot_\mathrm{ML}$.

  (ii) The Galerkin matrix of $a_h(\cdot,\cdot)$ is symmetric and positive definite. Thus, standard
  iterative schemes such as the CG algorithm can be used.

  (iii) Since the bilinear forms $c_\BEW, c_\BEV, c_\BEC$ are not symmetric,
  it follows that the Galerkin matrices of $b(\cdot,\Theta_\beta(\cdot)) + c(\cdot,\cdot)$ with
  $c\in\{c_\BEW, c_\BEV, c_\BEC\}$ are non-symmetric.
  However, their symmetric parts are positive definite and the GMRES method can be used as solver.
\end{remark}

\subsection{A simple error estimation} \label{sec_est}

In our implementation, we approximate the data $u_0,\phi_0$ by their $L_2$-projections
$u_{0h}\in S^1(\cS|_\Gamma)$, $\phi_{0h}\in P^0(\cS|_\Gamma)$. Then we can use the discretized
boundary integral operators for the evaluation of the right-hand sides.
Clearly, this induces additional consistency errors (usually called oscillations).
They are of standard type and are therefore neglected in the following considerations.

The next result provides us with a reliable bound for the distance of any discrete function to the exact solution.

\begin{theorem}
  Let the assumptions of Theorem~\ref{thm:LS:discrete} be satisfied, let $u_{0h}\in S^1(\cS|_\Gamma)$,
  $\phi_{0h}\in P^0(\cS|_\Gamma)$ be approximations of the given data $u_0,\phi_0$,
  and let $\uu$ be the exact solution of problem~\eqref{tp} as defined in Theorem~\ref{thm_equiv}
  with data $u_0,\phi_0$ being replaced by $u_{0h},\phi_{0h}$. Then,
  \begin{align*}
    \norm{\uu-\ww_{hp}}U \lesssim \norm{B\ww_{hp}-L}{V'} &+ 
    \norm{P_h \V((u_{0h},\phi_{0h})-\gamma\ww_{hp})}{H^{1/2}(\Gamma)} \\
    &+ \norm{h^{1/2}\nabla_\Gamma \V((u_{0h},\phi_{0h})-\gamma\ww_{hp})}{L_2(\Gamma)}
    \qquad\forall\ww_{hp} \in U_{hp}
  \end{align*}
  with $\TT$-independent constants.
\end{theorem}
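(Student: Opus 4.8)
The plan is to derive the bound from the generalized ellipticity estimate~\eqref{LS_ell}, which was established in the proof of Theorem~\ref{thm_equiv} and, crucially, holds for \emph{every} $\uu\in U$ rather than only for discrete functions (the weaker equivalence~\eqref{eq:LS:equiv}, valid only on $U_\hp$, would not suffice since $\uu-\ww_{hp}\notin U_\hp$). Applying~\eqref{LS_ell} to $\uu-\ww_{hp}\in U$ gives
\begin{align*}
  \norm{\uu-\ww_{hp}}U
  \lesssim
  \norm{\V\bigl(\gamma(\uu-\ww_{hp})\bigr)}{H^{1/2}(\Gamma)} + \norm{B(\uu-\ww_{hp})}{V'}.
\end{align*}
By construction the exact solution $\uu$ makes $J_1(\cdot;f,u_{0h},\phi_{0h})$ vanish (cf.~Theorem~\ref{thm_equiv} applied with data $u_{0h},\phi_{0h}$), so $B\uu=L$ and $\V(\gamma\uu)=\V(u_{0h},\phi_{0h})$. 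Hence, by linearity of $\V$, the first term on the right equals $\norm{\psi}{H^{1/2}(\Gamma)}$ with $\psi:=\V\bigl((u_{0h},\phi_{0h})-\gamma\ww_{hp}\bigr)$, and the second equals $\norm{B\ww_{hp}-L}{V'}$. Thus the whole claim reduces to the localization estimate
\begin{align*}
  \norm{\psi}{H^{1/2}(\Gamma)} \lesssim \norm{P_h\psi}{H^{1/2}(\Gamma)} + \norm{h^{1/2}\nabla_\Gamma\psi}{L_2(\Gamma)}.
\end{align*}

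For this I would first note that, since $u_{0h}$, $\phi_{0h}$ and the components of $\gamma\ww_{hp}$ are $\cS|_\Gamma$-piecewise polynomials, the mapping properties of the single and double layer operators give $\psi\in H^1(\Gamma)$, so $\nabla_\Gamma\psi\in L_2(\Gamma)$ and the last term is finite. Then write $\psi=P_h\psi+(I-P_h)\psi$. Because $P_h$ is a bounded projection onto $S^1(\cS|_\Gamma)$ (Theorem~\ref{thm:LS:discrete}) and $\Pi_h\psi\in S^1(\cS|_\Gamma)$, where $\Pi_h$ denotes the $L_2(\Gamma)$-orthogonal projection, we get
\begin{align*}
  \norm{(I-P_h)\psi}{H^{1/2}(\Gamma)}
  = \norm{(I-P_h)(\psi-\Pi_h\psi)}{H^{1/2}(\Gamma)}
  \lesssim \norm{\psi-\Pi_h\psi}{H^{1/2}(\Gamma)},
\end{align*}
so it remains to show the approximation bound $\norm{\psi-\Pi_h\psi}{H^{1/2}(\Gamma)}\lesssim \norm{h^{1/2}\nabla_\Gamma\psi}{L_2(\Gamma)}$.

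This last, and only genuinely technical, step would use the interpolation-space identity $H^{1/2}(\Gamma)=[L_2(\Gamma),H^1(\Gamma)]_{1/2}$, the standard $L_2$-approximation property $\norm{\psi-\Pi_h\psi}{L_2(\Gamma)}\lesssim \norm{h\nabla_\Gamma\psi}{L_2(\Gamma)}$, and the $H^1(\Gamma)$-stability of $\Pi_h$ on quasi-uniform meshes (as assumed throughout Section~\ref{sec_num}); combined with a Poincar\'e--Wirtinger estimate applied to $\psi$ minus its mean (which $\Pi_h$ reproduces) the latter yields $\norm{\psi-\Pi_h\psi}{H^1(\Gamma)}\lesssim \norm{\nabla_\Gamma\psi}{L_2(\Gamma)}$. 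Interpolating the $L_2$- and $H^1$-bounds and using quasi-uniformity ($h_e\simeq h$) gives $\norm{\psi-\Pi_h\psi}{H^{1/2}(\Gamma)}\lesssim h^{1/2}\norm{\nabla_\Gamma\psi}{L_2(\Gamma)}\simeq\norm{h^{1/2}\nabla_\Gamma\psi}{L_2(\Gamma)}$, with constant depending only on $\Gamma$ and the shape-regularity/quasi-uniformity. Collecting the three displayed estimates then proves the theorem. I expect the main obstacle to be exactly this $H^{1/2}$-localization inequality: verifying $\psi\in H^1(\Gamma)$ on a possibly polygonal Lipschitz boundary (via the regularizing properties of $\V$ on piecewise polynomials) and keeping all constants independent of $\TT$; everything else is bookkeeping on top of results already proved in the paper.
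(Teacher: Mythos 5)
Your argument follows the paper's proof essentially step by step: apply the ellipticity/boundedness equivalence~\eqref{LS_ell} to $\uu-\ww_{hp}\in U$ (correctly noting that the discrete-only equivalence~\eqref{eq:LS:equiv} would not suffice), use $B\uu=L$ and $\V(\gamma\uu)=\V(u_{0h},\phi_{0h})$ from Theorem~\ref{thm_equiv} with the perturbed data, and split off the $P_h$ and $(1-P_h)$ parts by the triangle inequality. The only point where you diverge is the final localization bound $\norm{(1-P_h)\psi}{H^{1/2}(\Gamma)}\lesssim\norm{h^{1/2}\nabla_\Gamma\psi}{L_2(\Gamma)}$: the paper simply cites \cite[Lemma~24]{arcme}, whereas you prove it yourself via $(1-P_h)\psi=(1-P_h)(\psi-\Pi_h\psi)$, the $L_2$- and $H^1$-approximation/stability properties of $\Pi_h$, Poincar\'e--Wirtinger, and interpolation $H^{1/2}(\Gamma)=[L_2(\Gamma),H^1(\Gamma)]_{1/2}$; this is a valid and self-contained alternative, but it yields the estimate only with the global mesh-size under global quasi-uniformity, while the cited lemma holds with the local mesh-width function and therefore also covers the adaptively refined (non-quasi-uniform) meshes on which the paper actually employs the estimator~\eqref{est}. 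Your remark that $\psi\in H^1(\Gamma)$ via the mapping properties of $\slo$ and $\dlo$ applied to piecewise-polynomial densities is a sensible addition that the paper leaves implicit.
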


\begin{proof}
  The boundedness and $U$-ellipticity of $b(\cdot,\Theta_1\,\cdot)+c_\LS(\cdot,\cdot)$
  (cf.~\eqref{LS_ell}) show
  \begin{align*}
    \norm{\uu-\ww_{hp}}U &\simeq \norm{B(\ww_{hp}-\uu)}{V'} + 
    \norm{\V(\gamma\uu-\gamma\ww_{hp})}{H^{1/2}(\Gamma)} \\
    &\leq \norm{B(\ww_{hp}-\uu)}{V'} + \norm{P_h\V(\gamma\uu-\gamma\ww_{hp})}{H^{1/2}(\Gamma)} \\
    &\qquad\qquad+ \norm{(1-P_h)\V(\gamma\uu-\gamma\ww_{hp})}{H^{1/2}(\Gamma)}.
  \end{align*}
  Using the fact that $\uu$ is the exact solution, we have $B\uu=L$ and $\V(\gamma\uu) = \V(u_{0h},\phi_{0h})$.
  Then, applying~\cite[Lemma~24]{arcme}, that is,
  \begin{align*}
    \norm{(1-P_h)\V( (u_{0h},\phi_{0h})-\gamma\ww_{hp})}{H^{1/2}(\Gamma)} \lesssim 
    \norm{h^{1/2}\nabla_\Gamma \V( (u_{0h},\phi_{0h})-\gamma\ww_{hp})}{L_2(\Gamma)},
  \end{align*}
  we finish the proof.
\end{proof}

 From the last theorem we infer that, up to data oscillation terms, we have a computable error bound
\begin{align} \label{est}
  \est(\uu_{hp}):=\est(\uu_{hp};f,u_{0h},\phi_{0h}) :=  \norm{B\uu_{hp}-L}{V'} &+ 
    \norm{\Pi_h \V((u_{0h},\phi_{0h})-\gamma\uu_{hp})}{H^{1/2}(\Gamma)} \nonumber\\
    &+\norm{h^{1/2} \nabla_\Gamma \V((u_{0h},\phi_{0h})-\gamma\uu_{hp})}{L_2(\Gamma)}.
\end{align}
In particular, the first term is the usual energy error in the DPG method and the second term
can be evaluated as described in Section~\ref{sec_LS_impl} above.
The last term has the same form as a weighted residual error estimator in BEM, see~\cite{arcme} for an overview.
Note that $\est(\cdot)$ can be used to estimate both the error of the least-squares coupling and of the
coupling methods with Galerkin BEM.

\begin{remark}
  For the coupling with Galerkin boundary elements (at least for the case with hypersingular operator
  from Subsection~\ref{sec_W}), reliable and localizable residual error estimators can be
  developed, following the lines of~\cite{AuradaFFKMP_13_CFB} and references given there.
  However, for brevity and better comparability of the least-squares and Galerkin boundary element couplings,
  we use $\est(\cdot)$ from \eqref{est} to plot an upper bound for the global error
  (comprising all solution components) for all our coupling methods.
\end{remark}

\subsection{Example with smooth solution}

\begin{figure}
  \begin{center}
    \psfrag{x}[c][c]{\tiny $x$}
    \psfrag{y}[c][c]{\tiny $y$}

    \psfrag{number of elements}[c][c]{\tiny number of elements $N$}
    \psfrag{error and estimator}[c][c]{\tiny error and estimator}
    \psfrag{errU}{\tiny $\norm{u-u_{hp}}{L_2(\Omega)}$}
    \psfrag{errSigma}{\tiny $\norm{\ssigma-\ssigma_{hp}}{L_2(\Omega)}$}
    \psfrag{errEst}{\tiny $\est(\uu_{hp})$}
    \psfrag{Nm12}{\tiny $N^{-1/2}$}

    \includegraphics[height=0.4\textwidth,width=0.32\textwidth]{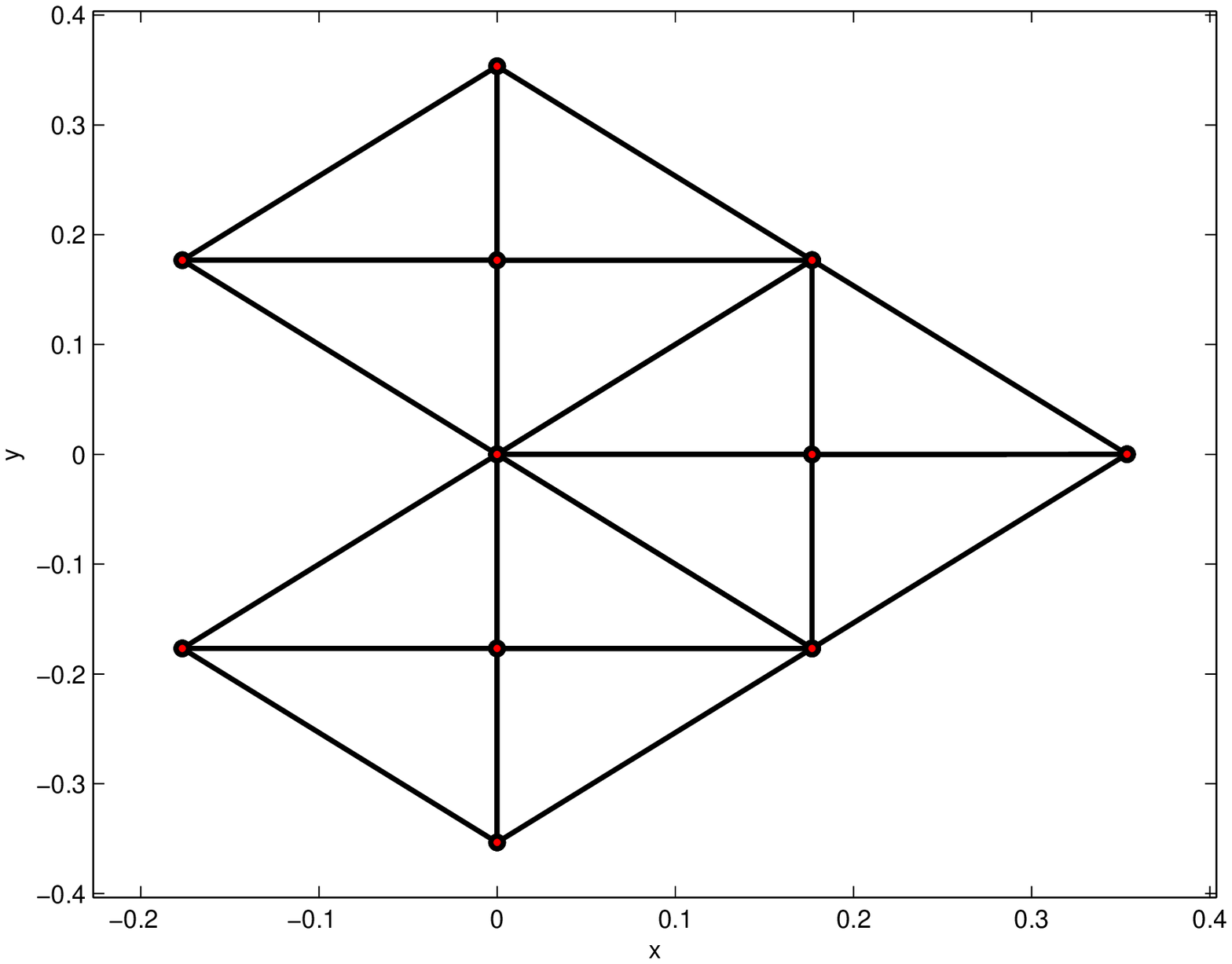}
    \qquad
    \includegraphics[width=0.45\textwidth]{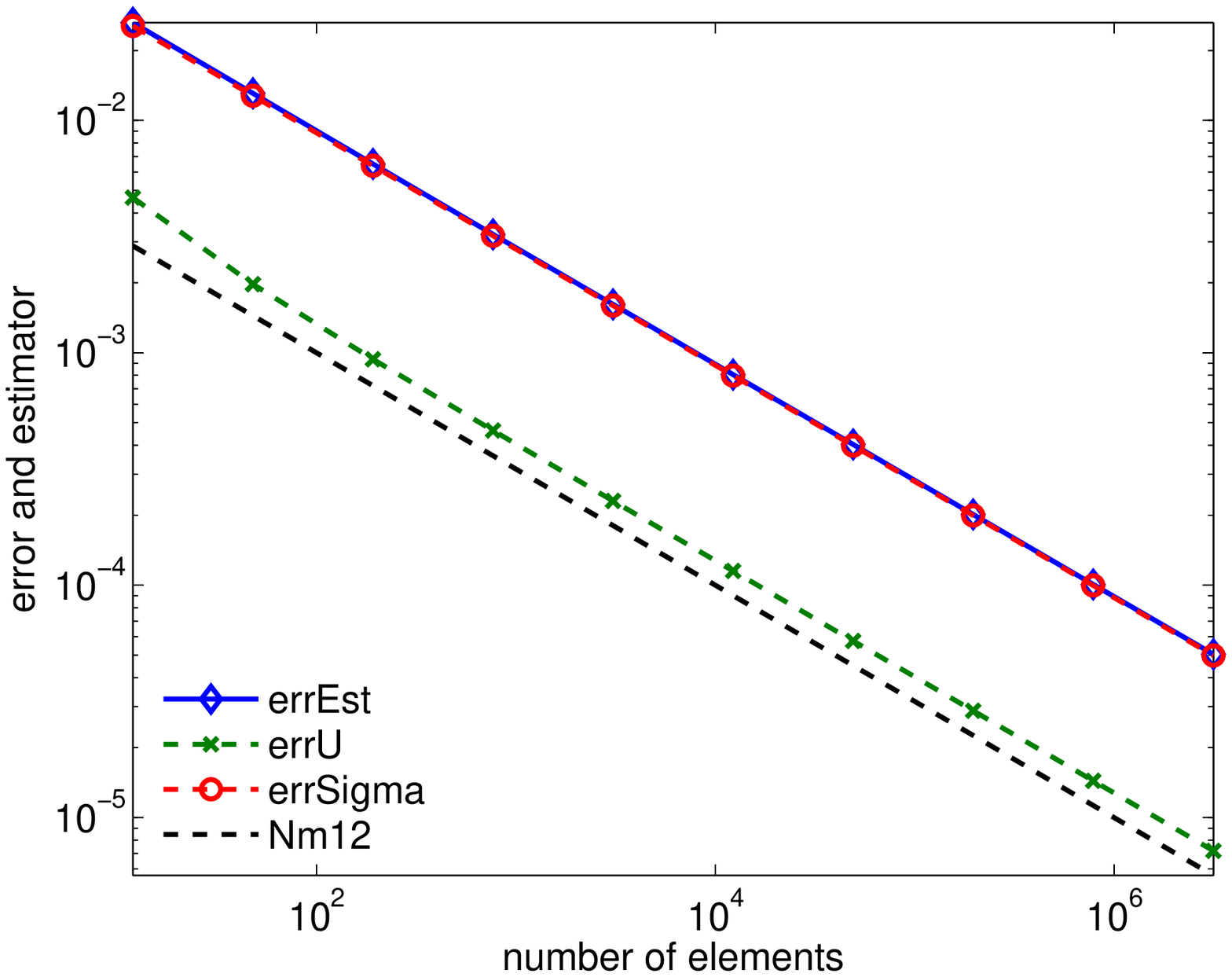}

  \end{center}
  \caption{Domain with initial triangulation $\TT_0$ (left); $L_2$ errors of $u$ and $\ssigma$,
           and error estimator est$(\cdot)$ for the least-squares coupling (right), with smooth solution $u$.}
  \label{fig:Lshape}
\end{figure}

\begin{figure}
  \begin{center}

    \psfrag{number of elements}[c][c]{\tiny number of elements $N$}
    \psfrag{error and estimator}[c][c]{\tiny error and estimator}
    \psfrag{errU}{\tiny $\norm{u-u_{hp}}{L_2(\Omega)}$}
    \psfrag{errSigma}{\tiny $\norm{\ssigma-\ssigma_{hp}}{L_2(\Omega)}$}
    \psfrag{errEst}{\tiny $\est(\uu_{hp})$}
    \psfrag{Nm12}{\tiny $N^{-1/2}$}

    \includegraphics[width=0.45\textwidth]{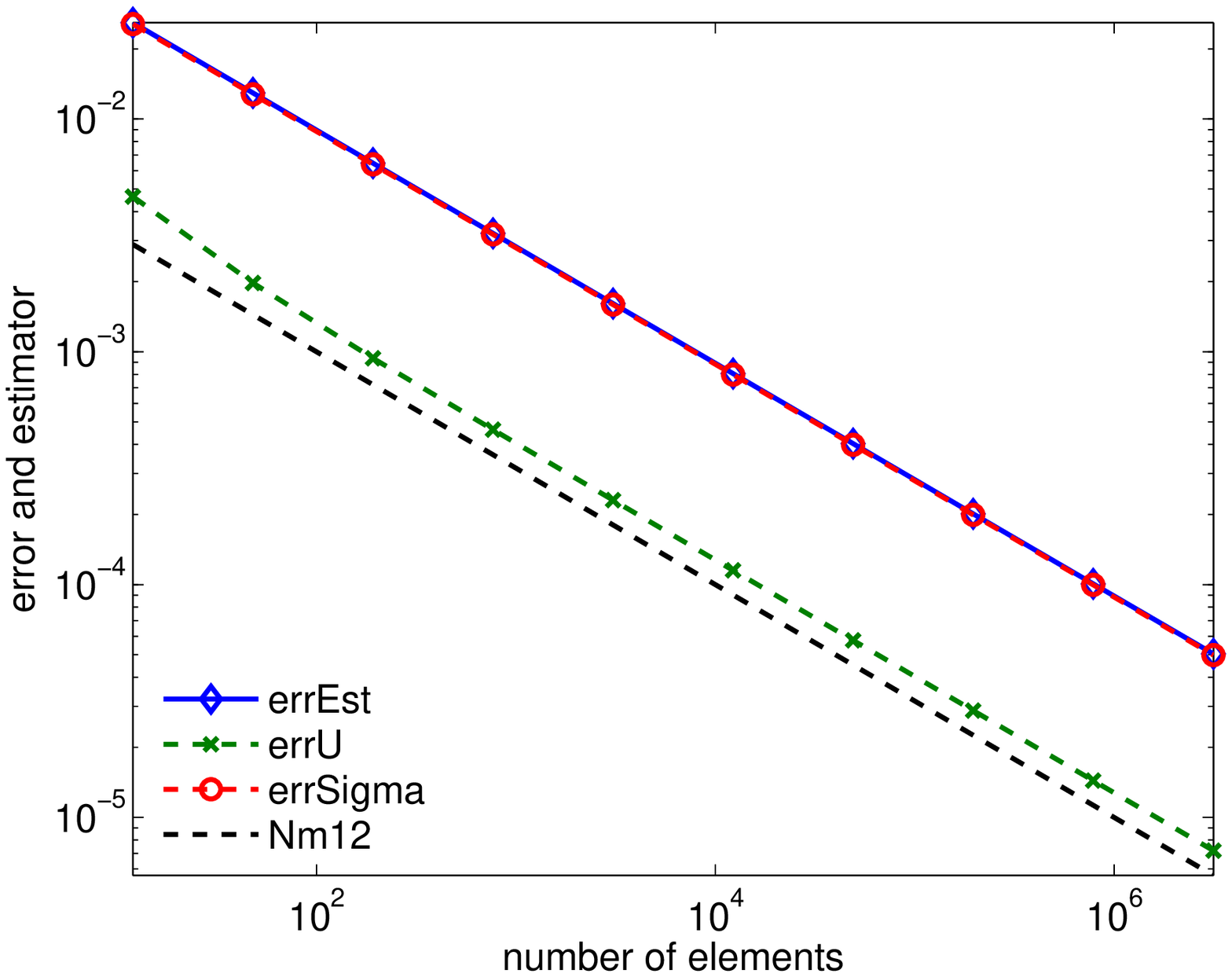}
    \quad
    \includegraphics[width=0.45\textwidth]{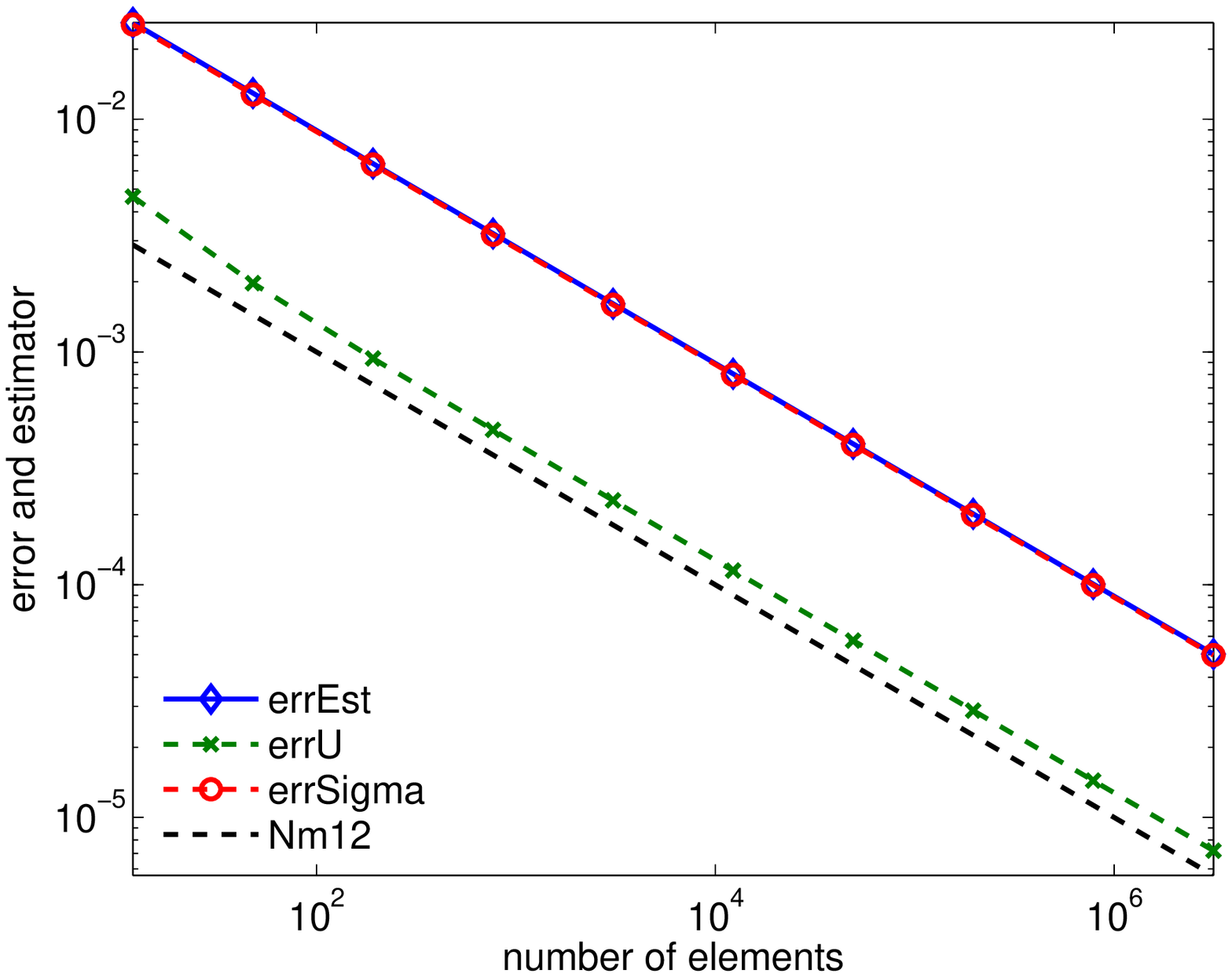}
  \end{center}
  \caption{$L_2$ errors of $u$ and $\ssigma$, and estimator est$(\cdot)$ for coupling with Galerkin BEM and smooth
           solution $u$: $c(\cdot,\cdot)=c_\BEW(\cdot,\cdot)$ (left) and $c(\cdot,\cdot)=c_\BEV(\cdot,\cdot)$ (right).}
  \label{fig:smooth}
\end{figure}

For the first example we take the L-shaped domain $\Omega$
sketched in Figure~\ref{fig:Lshape}
and consider the transmission problem \eqref{tp} with prescribed smooth solution
\begin{align*}
  u(x,y) = \frac{x^2+y^2}2, \qquad u^c(x,y) = 0.
\end{align*}
The corresponding data are $f = -2$, $u_0=u|_\Gamma$, $\phi_0 = \partial_{\nn_\Omega} u$.
The definition of the trace norms $\norm\cdot{H^{1/2}(\cS)}$, $\norm\cdot{H^{-1/2}(\cS)}$, 
the quasi-optimality of the methods and standard approximation theory, cf.~\cite{BBF_2013,DemkowiczG_11_ADM},
yield the a priori estimate
\begin{align*}
  \norm{\uu-\uu_{hp}}U \lesssim \inf_{\ww_{hp}\in U} \norm{\uu-\ww_{hp}}U \lesssim 
  h^s (\norm{u}{H^{1+s}(\Omega)}+\norm{f}{H^{s}(\Omega)}), \quad s\le 1.
\end{align*}
Hence, we expect a convergence order $\OO(h)$. This order is confirmed by Figure~\ref{fig:Lshape}
for the least-squares method (right plot), and for both one-equation coupling methods with
Galerkin boundary elements (Figure~\ref{fig:smooth}).
Here, for the least-squares BEM part, we have used the multilevel inner product $\ip\cdot\cdot_\mathrm{ML}$.

\subsection{Example with singular solution}

\begin{figure}
  \begin{center}

    \psfrag{number of elements}[c][c]{\tiny number of elements $N$}
    \psfrag{error and estimator}[c][c]{\tiny error and estimator}
    \psfrag{errU}{\tiny $\norm{u-u_{hp}}{L_2(\Omega)}$}
    \psfrag{errSigma}{\tiny $\norm{\ssigma-\ssigma_{hp}}{L_2(\Omega)}$}
    \psfrag{errEst}{\tiny $\est(\uu_{hp})$}
    \psfrag{Nm13}{\tiny $N^{-1/3}$}
    \psfrag{Nm512}{\tiny $N^{-5/12}$}
    \psfrag{Nm12}{\tiny $N^{-1/2}$}

    \includegraphics[width=0.45\textwidth]{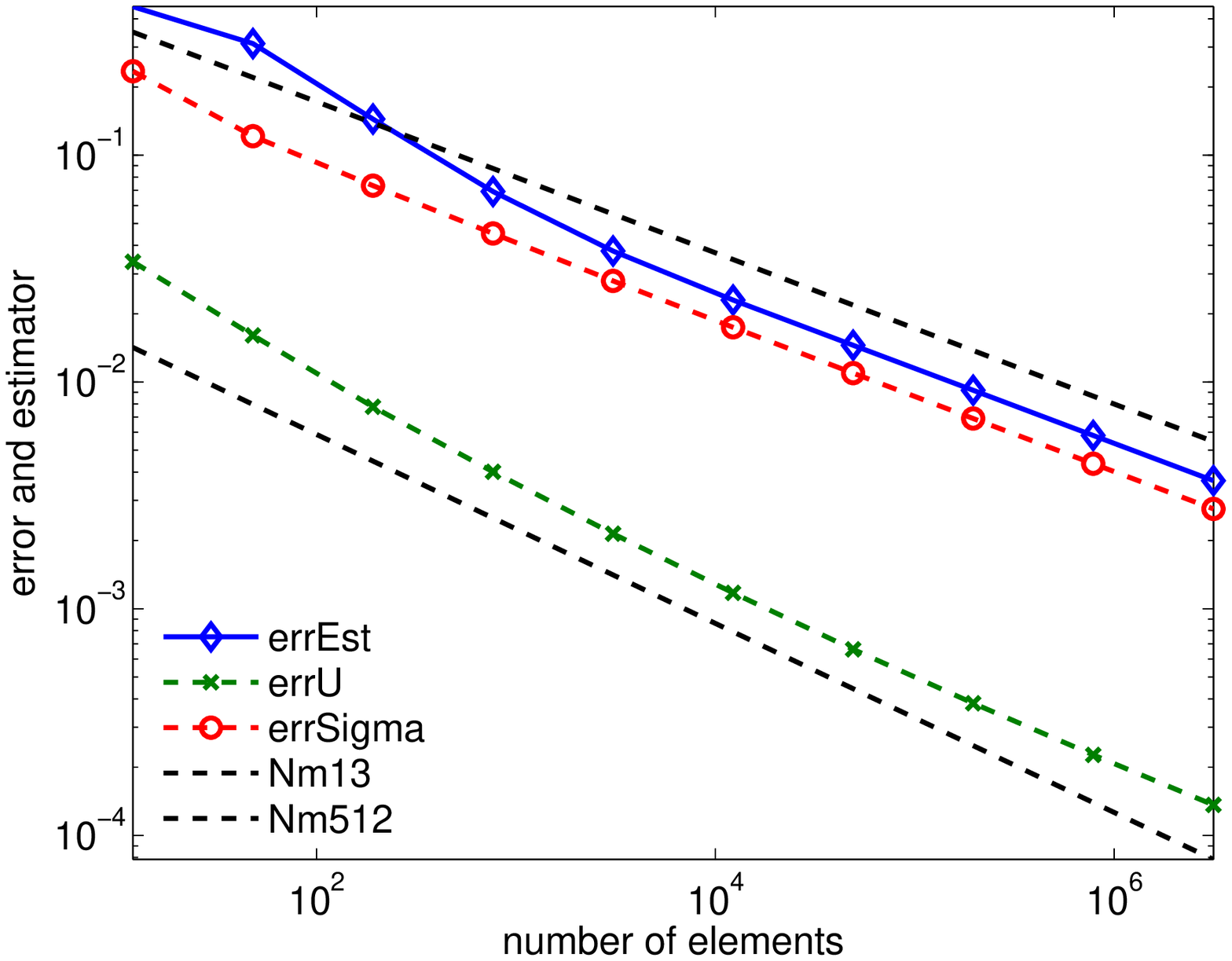}
    \quad
    \includegraphics[width=0.45\textwidth]{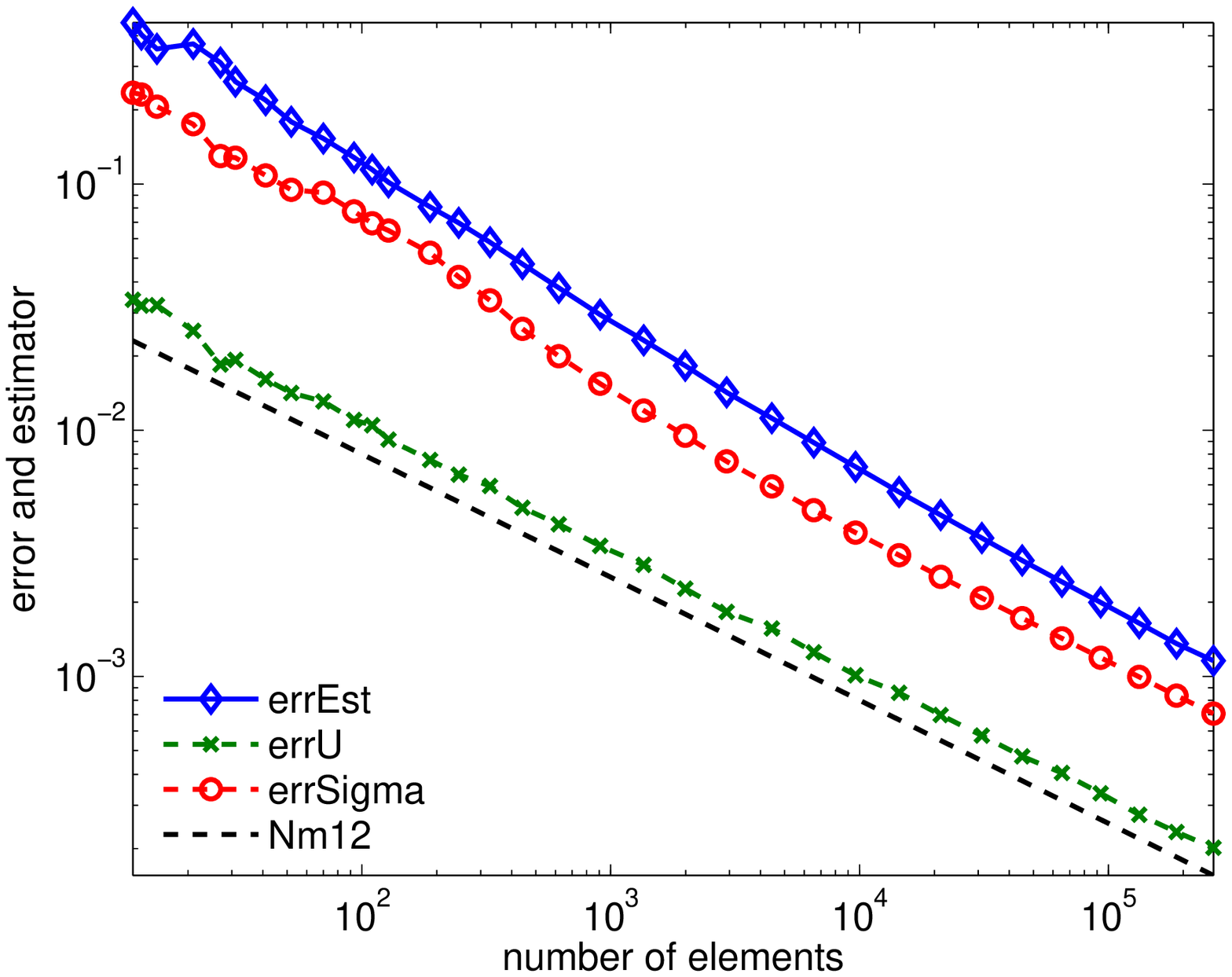}
  \end{center}
  \caption{$L_2$ errors of $u$ and $\ssigma$, and estimator est$(\cdot)$ for least-squares coupling
           and singular solution: uniform meshes (left) and adaptively refined meshes (right).}
  \label{fig:singularLS}
\end{figure}

\begin{figure}
  \begin{center}

    \psfrag{number of elements}[c][c]{\tiny number of elements $N$}
    \psfrag{error and estimator}[c][c]{\tiny error and estimator}
    \psfrag{errU}{\tiny $\norm{u-u_{hp}}{L_2(\Omega)}$}
    \psfrag{errSigma}{\tiny $\norm{\ssigma-\ssigma_{hp}}{L_2(\Omega)}$}
    \psfrag{errEst}{\tiny $\est(\uu_{hp})$}
    \psfrag{Nm13}{\tiny $N^{-1/3}$}
    \psfrag{Nm512}{\tiny $N^{-5/12}$}
    \psfrag{Nm12}{\tiny $N^{-1/2}$}

    \includegraphics[width=0.45\textwidth]{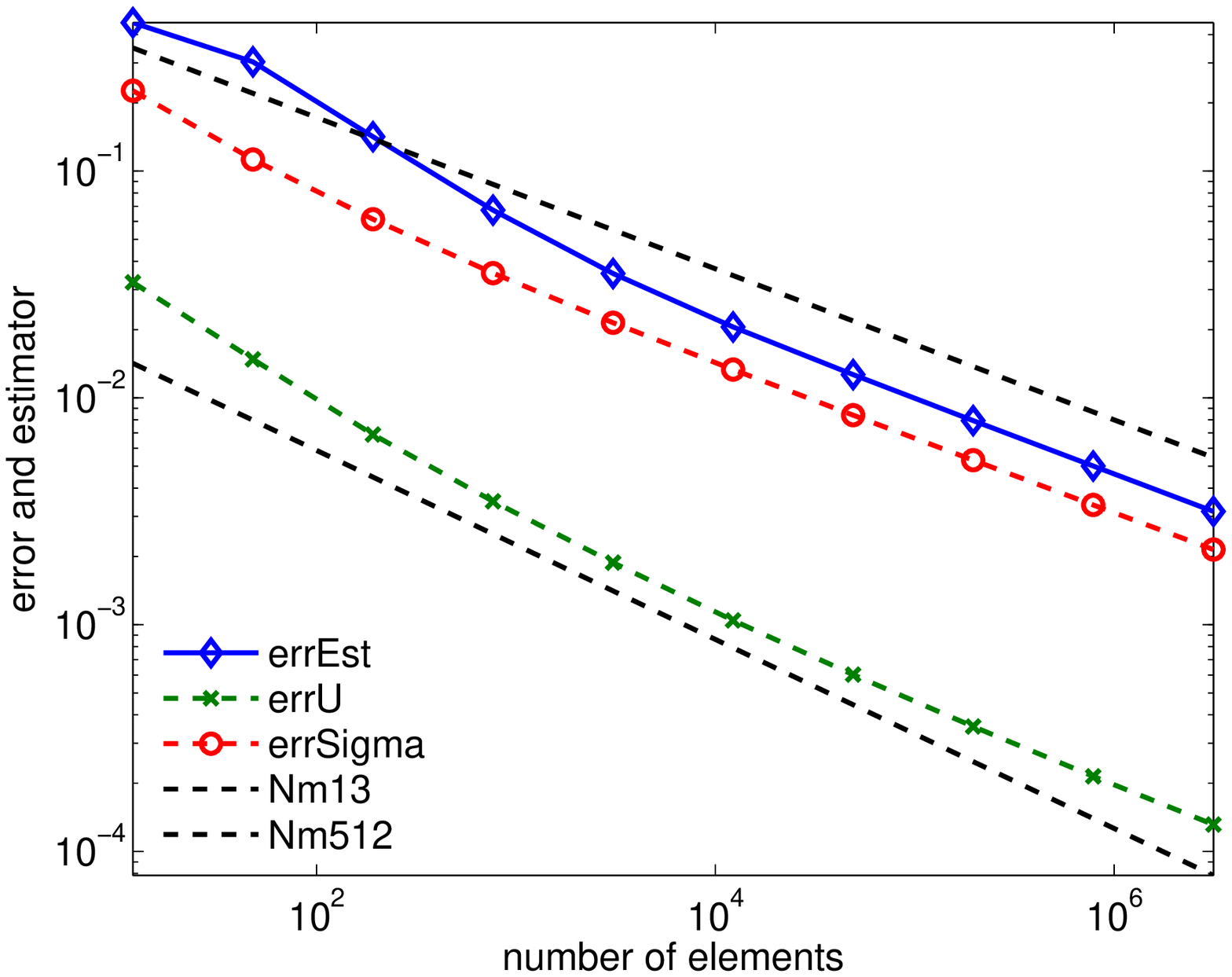}
    \quad
    \includegraphics[width=0.45\textwidth]{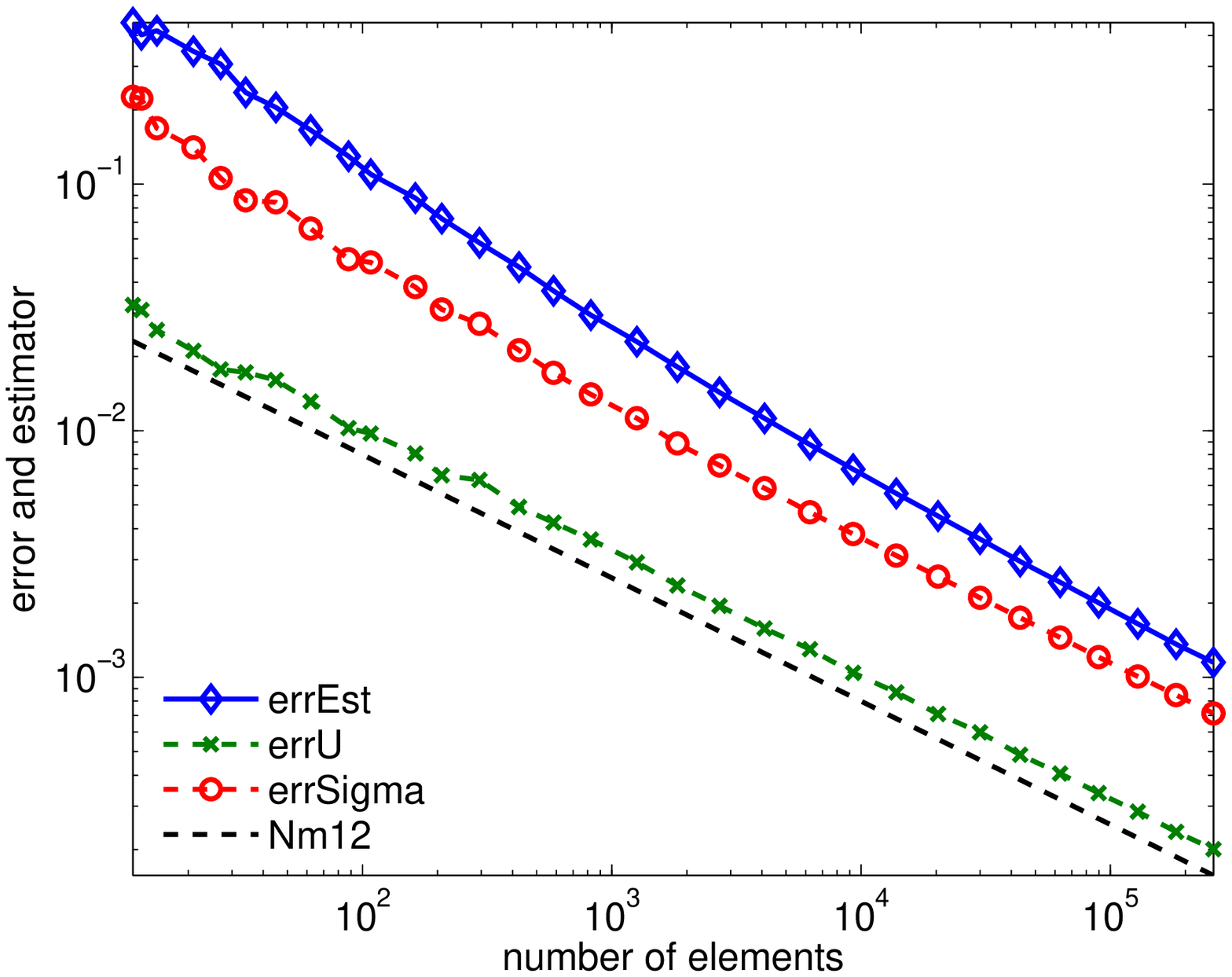}
  \end{center}
  \caption{$L_2$ errors of $u$ and $\ssigma$, and estimator est$(\cdot)$ for coupling with Galerkin boundary
           elements and $c(\cdot,\cdot) = c_\BEW(\cdot,\cdot)$ with singular solution:
           uniform meshes (left) and adaptively refined meshes (right).}
  \label{fig:singularHYP}
\end{figure}

\begin{figure}
  \begin{center}

    \psfrag{number of elements}[c][c]{\tiny number of elements $N$}
    \psfrag{error and estimator}[c][c]{\tiny error and estimator}
    \psfrag{errU}{\tiny $\norm{u-u_{hp}}{L_2(\Omega)}$}
    \psfrag{errSigma}{\tiny $\norm{\ssigma-\ssigma_{hp}}{L_2(\Omega)}$}
    \psfrag{errEst}{\tiny $\est(\uu_{hp})$}
    \psfrag{Nm13}{\tiny $N^{-1/3}$}
    \psfrag{Nm512}{\tiny $N^{-5/12}$}
    \psfrag{Nm12}{\tiny $N^{-1/2}$}

    \includegraphics[width=0.45\textwidth]{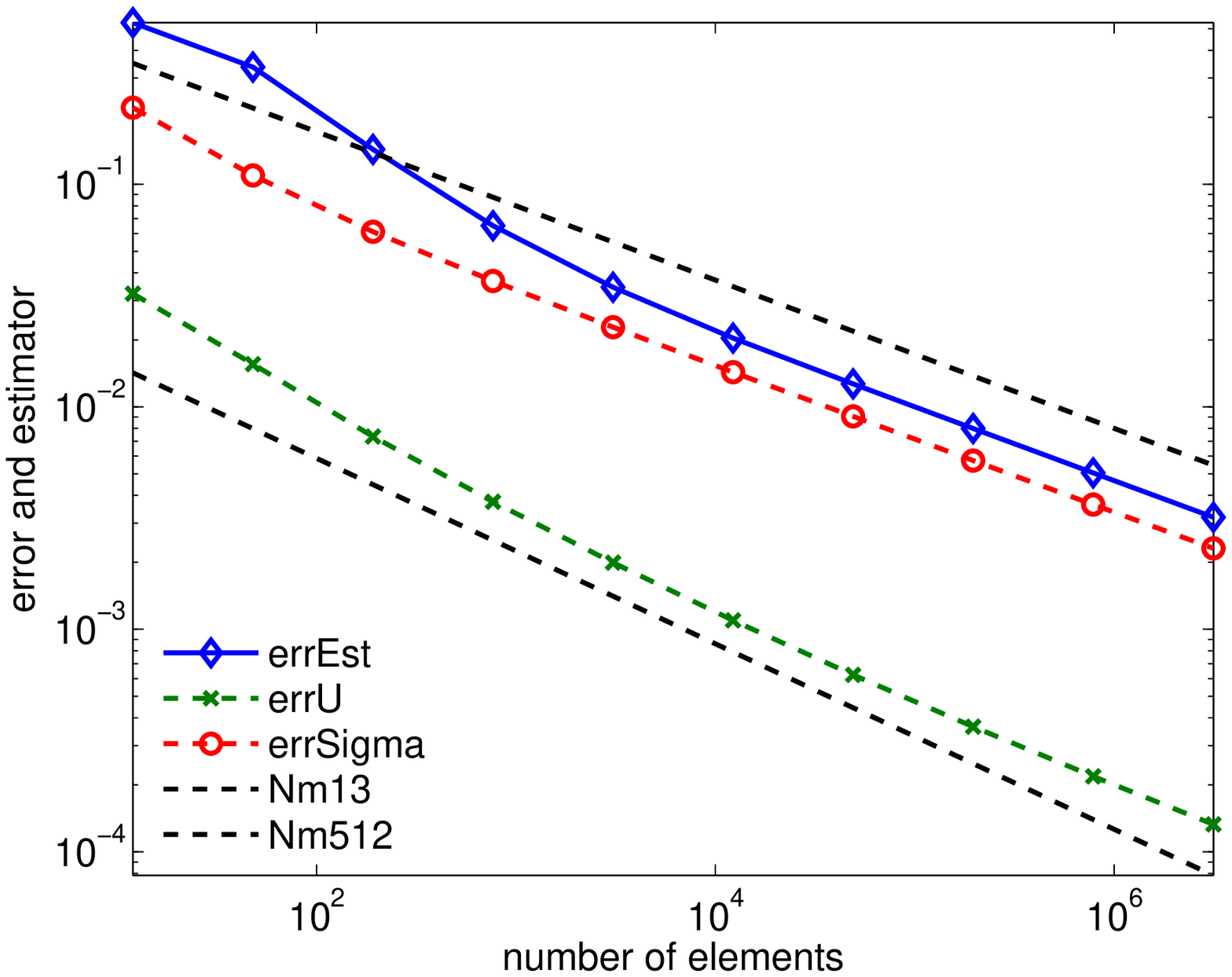}
    \quad
    \includegraphics[width=0.45\textwidth]{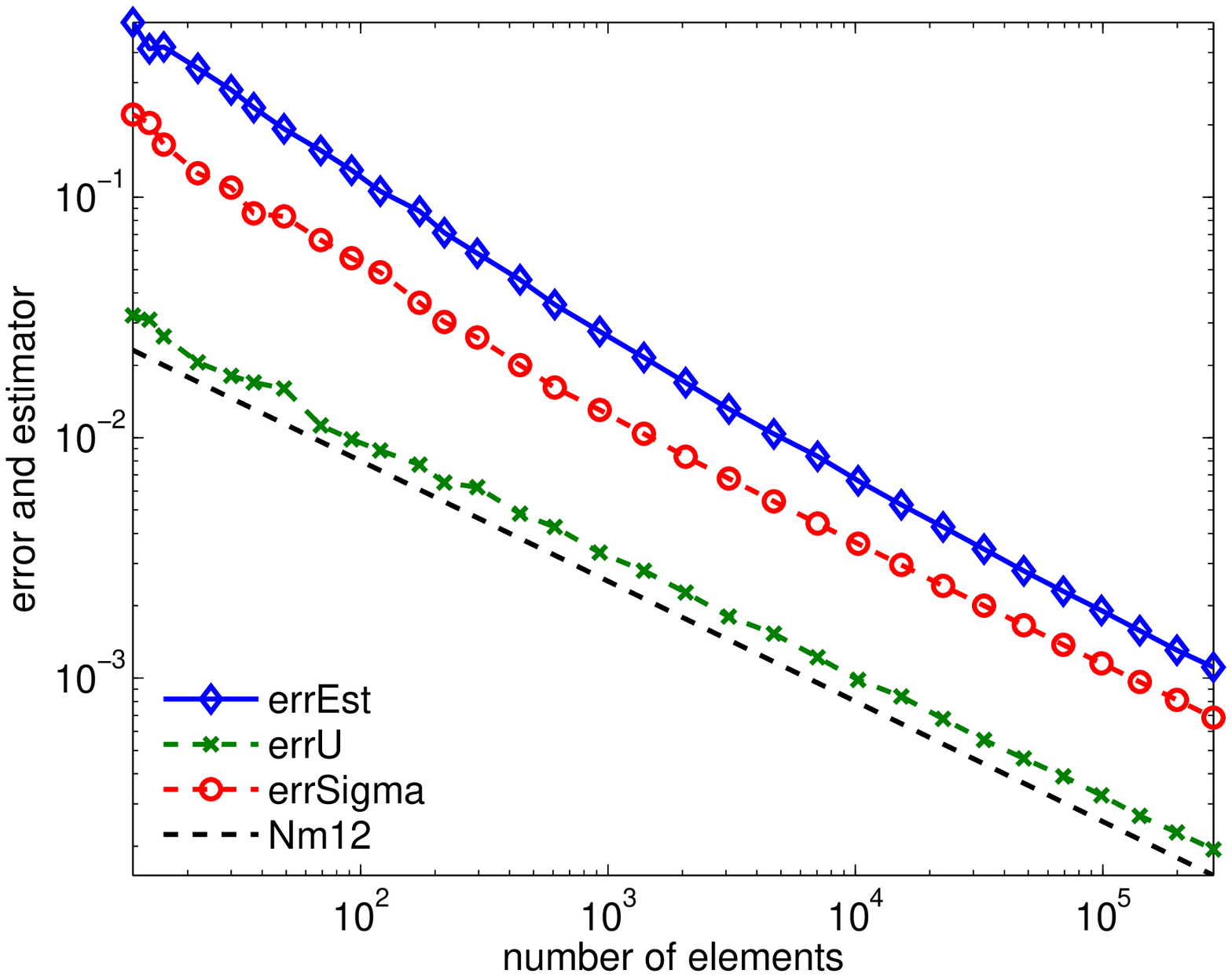}
  \end{center}
  \caption{$L_2$ errors of $u$ and $\ssigma$, and estimator est$(\cdot)$ for coupling with Galerkin boundary
           elements and $c(\cdot,\cdot) = c_\BEV(\cdot,\cdot)$ with singular solution:
           uniform meshes (left) and adaptively refined meshes (right).}
  \label{fig:singularSL}
\end{figure}

We consider the L-shaped domain from Figure~\ref{fig:Lshape} and prescribe the exact solution
\begin{align*}
  u(x,y) &= r^{2/3}\cos(2/3\theta), \quad
  u^c(x,y) = \frac{1}{10} \cdot \frac{x+y-\tfrac18}{(x-\tfrac18)^2 + y^2}.
\end{align*}
Here, $(r,\theta)$ are polar coordinates centered at the incoming corner.
There holds $u\in H^s(\Omega)$ for all $s<5/3$ and, therefore, we expect an overall convergence rate of
$\OO(h^{2/3}) = \OO(N^{-1/3})$. This is confirmed for all coupling methods by the left plots of
Figures~\ref{fig:singularLS},~\ref{fig:singularHYP},~\ref{fig:singularSL}.
We observe that the $L_2$ errors of $u$ converge with a better rate between $\OO(N^{-5/12})$ and $\OO(N^{-1/3})$.

For this example we used $\ip\cdot\cdot_\hyp$ as inner product in $H^{1/2}(\Gamma)$.
This is the simplest selection that also works for non-uniform meshes, needed when considering
an adaptive refinement strategy. For such a strategy we use the quantities
\begin{align*}
  \norm{L-B\uu}{V'}^2 + \norm{h^{1/2} \nabla_\Gamma\V( (u_{0h},\phi_{0h})-\gamma\uu_{hp})}{L_2(\Gamma)}^2
\end{align*}
as indicators, and refine with bulk-strategy (also called Doerfler-marking)
and 30 percent marking (of the squared quantities).
This error estimator is purely heuristical and is used for its locality.
 From Figures~\ref{fig:singularLS},~\ref{fig:singularHYP}, and~\ref{fig:singularSL} (right
plots) we observe that the rate $\OO(N^{-1/2})$ is recovered for all coupling methods.

\bibliographystyle{abbrv}
\bibliography{bib}
\end{document}